\documentclass[12pt]{amsart}    

\usepackage{amsmath,amssymb,latexsym} 
\usepackage{graphicx}
\usepackage{fullpage}
\usepackage[square, numbers]{natbib}   
\usepackage{comment}
\usepackage{color}
\usepackage{cases}
\usepackage[squaren,Gray]{SIunits}

\newtheorem{theorem}{Theorem} 
\newtheorem{corollary}{Corollary}
\newtheorem{proposition}{Proposition}
\newtheorem{claim}{Claim}
\newtheorem{lemma}{Lemma}
\theoremstyle{definition}

\theoremstyle{remark}

\def\setR{\mathcal{R}}
\def\setQ{\mathcal{Q}}

\def\G{\mathcal{G}}
\def\V{\mathcal{V}}
\def\A{\mathcal{A}}

\def\R{\mathbb{R}}

\def\Z{\mathbb{Z}}
\def\T{\mathsf{T}}

\def\dd{\boldsymbol{d}}

\def\qq{\boldsymbol{q}}
\def\rr{\boldsymbol{r}}

\def\xx{\boldsymbol{x}}

\def\yy{\boldsymbol{y}}

\def\dot{\makebox[1ex]\cdot}

\def\zero{\boldsymbol{0}}

\def\ds{\displaystyle}

\def\Min{\operatorname{Min}}

\def\ave{\operatorname{ave}}
\def\Pmaxnoret{\textup{P$^{\max}_{\mbox{\textup{\tiny no return}}}$}}
\def\Pmaxret{\textup{P$^{\max}_{\mbox{\textup{\tiny return}}}$}}
\def\Pavenoret{\textup{P$^{\ave}_{\mbox{\textup{\tiny no return}}}$}}
\def\Paveret{\textup{P$^{\ave}_{\mbox{\textup{\tiny return}}}$}}

\def\eps{\varepsilon}

\def\D{\tau}
\def\gmaxret{g^{\max}_{\mbox{\textup{\tiny return}}}}

\def\gmax{g^{\max}}
\def\gave{g^{\ave}}

\title{Minimizing the waiting time for a one-way  \\ shuttle service}

\author{Laurent Daudet}
\email{laurent.daudet@enpc.fr}

\author{Fr\'ed\'eric Meunier}
\email{frederic.meunier@enpc.fr}

\address{\'{E}cole Nationale des Ponts et Chauss\'ees, CERMICS, 77455 Marne-la-Vall\'ee CEDEX, France}

\keywords{Convex optimization; scheduling; shortest paths; transportation; waiting time}
\begin{document}

\maketitle

\begin{abstract}
Consider a terminal in which users arrive continuously over a finite period of time at a variable rate known in advance. A fleet of shuttles has to carry the users over a fixed trip. What is the shuttle schedule that minimizes their waiting time? This is the question addressed in the present paper. We propose efficient algorithms for several variations of this question with proven performance guarantees. The techniques used are of various types (convex optimization, shortest paths,...). The paper ends with numerical experiments showing that most of our algorithms behave also well in practice.
\end{abstract}

\section{Introduction}

The original motivation of this paper comes from a partnership of the authors with Eurotunnel, the company operating the tunnel under the Channel. Eurotunnel is currently facing an increasing congestion due to the trucks waiting in the terminal before being loaded in the shuttles. A way to address this issue consists in scheduling the shuttles so that the trucks do not wait too long in the terminal.

In railway transportation, a traditional point of view considers that the demand can be smoothed by offering sufficiently enough departures over a day. Timetabling is then guided by other considerations, such as robustness, maintainability, or rolling stock. For instance, Swiss, Dutch and German companies usually design periodic timetables, which present many advantages~\cite{cordone2011optimizing, kroon2009new}. The way to optimize this kind of timetables has been the topic of many researches, initiated by Serafini and Ukovich \cite{serafini1989mathematical} and by Voorhoeve \cite{voorhoeve1993rail} explicitly in the railway context, see \cite{kroon2003variable, liebchen2003finding, liebchen2002case, nachtigall1996genetic} for further works. In the context of periodic timetables, a way to adapt the schedules to a demand with strong variations consists in inserting new departures at peak-hours and deleting departures when the demand is low.

Since the trip of the trucks in the tunnel is a small part of their whole journey, it is a reasonable approximation to assume that they cannot choose their arrival time in the terminal. Moreover, increasing the size of the fleet is not always doable in practice (the shuttles are expensive and the tunnel is used by other vehicles, which limits the maximal number of shuttle trips over a day). We face thus a different problem than the one addressed in the aforementioned literature: the demand is assumed to be fixed and nonelastic to the departures times, and the number of shuttles cannot be adjusted to the demand. Given a fleet of shuttles and a demand of transportation known in advance, the problem consists in designing a schedule for the shuttles that minimizes the waiting time of the users. There are timetabling problems with similar features, see  \cite{barrena2014exact, cacchiani2008column, cacchiani2010non, caprara2002modeling, cai1998greedy, ingolotti2006new} for instance, but these articles are at a more macroscopic level than what we require to solve our problem. Moreover, in the present work, the schedules have to be designed in an offline manner. In a transportation context, and especially for Eurotunnel, computing the schedule in advance is mandatory. 

We study several versions of the problem, mainly according to two features. The first feature is whether the shuttles are allowed to come back at the terminal after having realized the trip. The second feature is the objective function. We consider in turn the following two quantities to be minimized: the maximum waiting time and the average waiting time. The first objective is perhaps a fairer one regarding the users, while the second one is relevant for the global efficiency.

It seems that the question we address in the present paper is new. Moreover, it may be relevant for any situation where a demand, known in advance, has to be processed by batches and for which we want to minimize the processing time. This kind of situation is often met in chemical industry. An example whose motivation is very close to ours considers a test to be performed on samples, which arrive continuously~\cite{brauner2007scheduling}. The test takes a certain amount of time and can be performed on several samples simultaneously. The question is then to determine the test schedule that minimizes the processing time.

We propose efficient algorithms for the different versions. ``Efficient'' here means ``theoretically efficient'', according to their time complexity and the performance guarantee. It also means ``practically efficient'' for almost all cases, as shown by numerical experiments conducted on real-world instances. It might also be worth noting that, depending on the version considered, the proof techniques rely on various fields of optimization (convexity, Karush-Kuhn-Tucker conditions, binary search, domination results, shortest paths in finite graphs,...).

\section{Model}

\subsection{The problems}
We are given a fleet of $S$ shuttles, for which departure times have to be determined. All shuttles have a capacity $C\geq 0$ and are situated in the same loading terminal at the beginning of the day. The users are infinitesimal and arrive continuously in the terminal over a finite period of time, modeled as the interval $[0,T]$, following the cumulative nondecreasing function $D:[0,T]\rightarrow\R_+$, where $D(t)$ is the total number of users arrived in the terminal during the interval $[0,t]$. We assume throughout the paper that $D(T)>0$. The shuttles have to carry the users over a fixed trip.

When the users arrive in the terminal, they enter a queue. This queue closes when all the users who will leave with the next shuttle have arrived in the queue and users can enter a new queue only if the previous one is closed (this is how it works at Eurotunnel). When a queue is closed, the users in that queue can start boarding the shuttle. The process in illustrated on Figure~\ref{fig:loading}. Loading a shuttle with a total of $x$ users takes a time $\nu x$. Note that setting $\nu$ to zero allows to model the case where the users do not have to wait for the last user before boarding. Even if no users arrive strictly after time $T$, loading and departures are allowed after that instant. 

\begin{figure}
\begin{center}
\includegraphics[width=12cm]{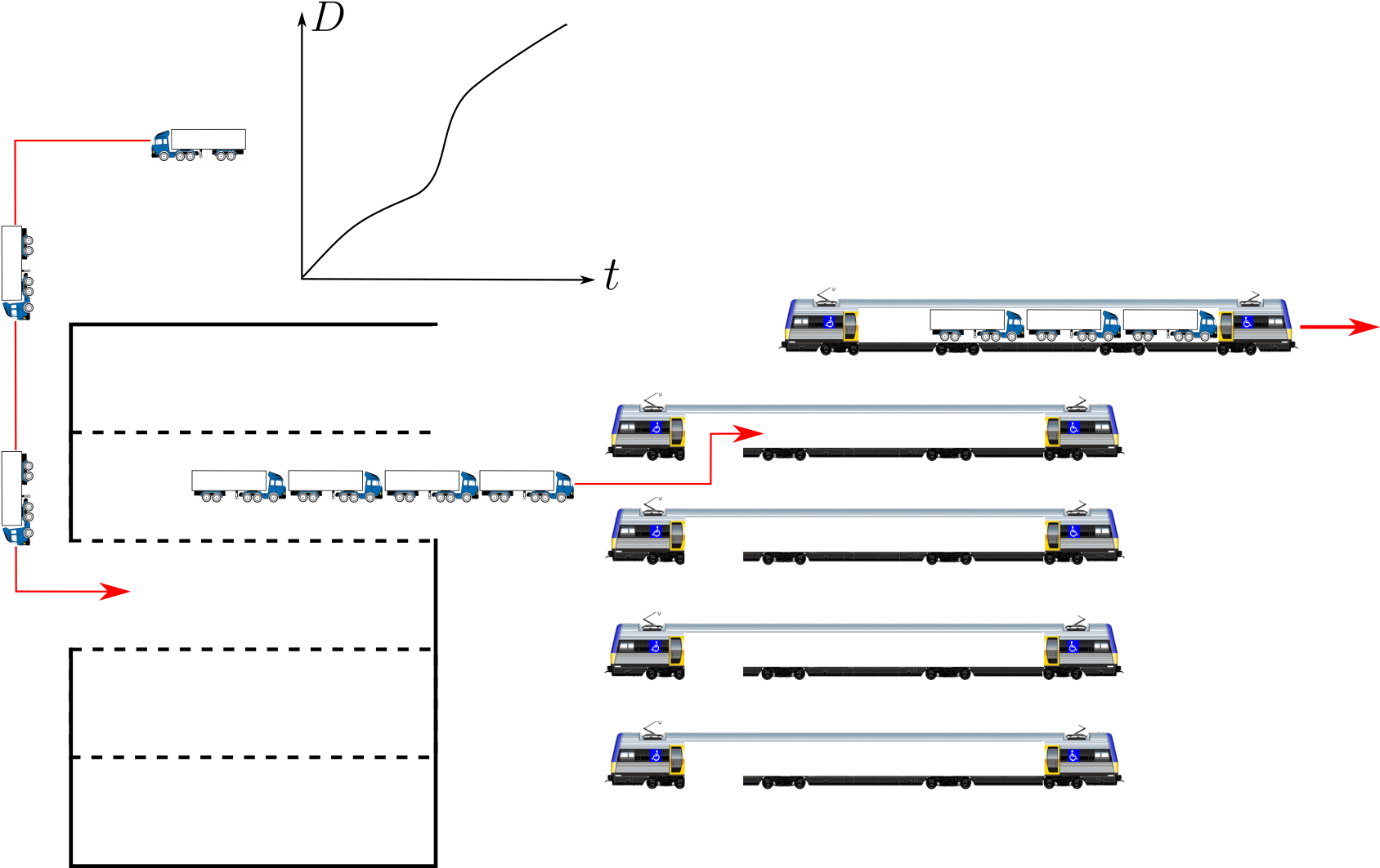}
\caption{The process of arrival, loading, and departure in the terminal.}\label{fig:loading}
\end{center}
\end{figure}

Two possibilities are considered regarding the shuttles. Either return is not allowed: once the shuttles leave, they never come back to the terminal; or it is allowed: once the shuttles leave, they come back to the terminal after a time equal to $\pi\geq 0$. Two objective functions to be minimized are considered: the maximum waiting time and the average waiting time.

We have thus four problems:
\begin{itemize}
\item \Pmaxnoret, which consists of not allowing return and minimizing the maximum waiting time.
\item \Pavenoret, which consists of not allowing return and minimizing the average waiting time.
\item \Pmaxret, which consists of allowing return and minimizing the maximum waiting time.
\item \Paveret, which consists of allowing return and minimizing the average waiting time.
\end{itemize} 

Practical constraints impose that overtake is not possible and thus, when return is allowed, the departure orders of the shuttles remain the same over the whole period. It is nevertheless possible to have simultaneous trips. This is an approximation in the case of Eurotunnel (we neglect the security distance and the length of the shuttles). For other situations, as the chemical application mentioned in the introduction, it may match what is met in practice.

\subsection{The demand}

Throughout the paper, we assume that $D(\dot)$ is upper semicontinuous. It allows to model discontinuity in the arrival process (batch of users arriving simultaneously). Yet, a weaker requirement could lead to mathematical difficulties, e.g., nonexistence of optimal solutions even for very simple cases.

The {\em pseudo-inverses} of $D(\dot)$, defined by 
$$\D\colon y\in[0,D(T)]\longmapsto \left\{\begin{array}{ll}\inf\left\{t\in[0,T]\colon D(t)>y\right\}\in[0,T]&\mbox{if}\;y<D(T)\\T&\mbox{otherwise}\end{array}\right.$$
and
$$\bar\D\colon y\in[0,D(T)] \longmapsto  \inf\left\{t\in[0,T]\colon D(t)\geq y\right\}\in[0,T],$$
play an important role in the paper. Note that they are nondecreasing functions and that $\D(y)\geq\bar\D(y)$ for all $y\in[0,D(T)]$. Times $\D(y)$ and $\bar\D(y)$ are respectively interpreted as the arrival times of the first user after the $y$ firsts and of the last user of these $y$ firsts. Since $D(\dot)$ is upper semicontinuous, we have the following properties, with proofs for sake of completeness.

\begin{lemma}\label{lem:pseudo}
We have $D(\D(y))\geq D(\bar\D(y))\geq y$ for every $y\in[0,D(T)]$.
\end{lemma}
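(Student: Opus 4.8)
\emph{Structure of the argument.} The statement splits into two inequalities, $D(\D(y))\geq D(\bar\D(y))$ and $D(\bar\D(y))\geq y$, and I would prove them separately. The first one is essentially bookkeeping: it is already recorded just before the lemma that $\D(y)\geq\bar\D(y)$ for all $y\in[0,D(T)]$ — this comes from the inclusion $\{t\in[0,T]\colon D(t)>y\}\subseteq\{t\in[0,T]\colon D(t)\geq y\}$, which reverses when one takes infima, together with the convention handling $y=D(T)$, where $\D(y)=T\geq\bar\D(y)$. Since $D(\dot)$ is nondecreasing, applying $D$ to $\D(y)\geq\bar\D(y)$ gives $D(\D(y))\geq D(\bar\D(y))$.

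\emph{The main point.} The substance is the inequality $D(\bar\D(y))\geq y$. Fix $y\in[0,D(T)]$ and set $A:=\{t\in[0,T]\colon D(t)\geq y\}$, so that $\bar\D(y)=\inf A$. Since $y\leq D(T)$ we have $T\in A$, hence $A\neq\emptyset$. Now I would use the hypothesis that $D(\dot)$ is upper semicontinuous, which is exactly the statement that every strict sublevel set $\{t\in[0,T]\colon D(t)<y\}$ is relatively open in $[0,T]$; consequently $A$ is closed. A nonempty closed subset of the compact interval $[0,T]$ attains its infimum, so $\bar\D(y)\in A$, which is precisely $D(\bar\D(y))\geq y$. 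Chaining the two inequalities yields the claim.

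\emph{An elementary variant, if one wishes to avoid the topological characterization of upper semicontinuity.} One can argue directly: by monotonicity, $D(t)\geq y$ for every $t\in(\bar\D(y),T]$ (for such $t$ there is $s\in A$ with $s\leq t$, hence $D(t)\geq D(s)\geq y$); then, if $\bar\D(y)<T$, take $t_n\downarrow\bar\D(y)$ and use $\limsup_{t\to\bar\D(y)}D(t)\leq D(\bar\D(y))$ to get $D(\bar\D(y))\geq y$; and if $\bar\D(y)=T$ the inequality is immediate since $D(\bar\D(y))=D(T)\geq y$.

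\emph{Where the difficulty lies.} There is no serious obstacle here — the lemma is a warm-up. The only genuine subtlety is that upper semicontinuity is doing real work (the inequality $D(\bar\D(y))\geq y$ would fail for a left-continuous $D$ with an upward jump), and one must keep an eye on the boundary situations $y=0$, $y=D(T)$, and $\bar\D(y)\in\{0,T\}$, all of which are absorbed by the remarks above.
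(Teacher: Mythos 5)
Your proof is correct and takes essentially the same approach as the paper: the first inequality follows from $\D(y)\geq\bar\D(y)$ and monotonicity, and the second from upper semicontinuity forcing the infimum defining $\bar\D(y)$ to be attained. Your ``elementary variant'' with a sequence $t_n\downarrow\bar\D(y)$ is in fact exactly the paper's argument; the closed-sublevel-set formulation you lead with is just the equivalent topological phrasing of the same step.
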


\begin{proof}
Since $D(\dot)$ is nondecreasing, the first inequality is a direct consequence of the inequality $\D(y)\geq\bar\D(y)$, which is obvious from the definition. To prove the second inequality, consider $(t_n)$ a nonincreasing sequence converging toward $\bar\D(y)$ such that $D(t_n)\geq y$ for all $n$. By the upper semicontinuity of $D(\dot)$, we get then $D(\bar\D(y))\geq y$.
\end{proof}

\begin{lemma}\label{lem:semicont}
$\D(\dot)$ is upper semicontinuous and $\bar\D(\dot)$ is lower semicontinuous.
\end{lemma}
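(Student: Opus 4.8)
The plan is to lean on the monotonicity of $\D$ and $\bar\D$ recorded above. For a nondecreasing function on a closed interval, upper semicontinuity is equivalent to right-continuity and lower semicontinuity is equivalent to left-continuity (with the understanding that at an endpoint the missing one-sided condition is vacuous). So the task reduces to two one-sided continuity statements: $\D$ is right-continuous on $[0,D(T))$, and $\bar\D$ is left-continuous on $(0,D(T)]$.

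First I would handle $\D$. Fix $y_0\in[0,D(T))$. Monotonicity gives $\lim_{y\downarrow y_0}\D(y)=\inf_{y>y_0}\D(y)\ge\D(y_0)$, so only the reverse inequality needs work. If $\D(y_0)=T$ there is nothing left to prove, since $\D$ is bounded above by $T$. Otherwise fix $\eps>0$. Because $y_0<D(T)$, the set $\{t\in[0,T]\colon D(t)>y_0\}$ is nonempty (it contains $T$) and its infimum is $\D(y_0)$, so it contains some $t$ with $t<\D(y_0)+\eps$. For every $y$ with $y_0<y<D(t)$ — such $y$ exist because $D(t)>y_0$, and they satisfy $y<D(t)\le D(T)$, hence fall in the first branch of the definition of $\D$ — we get $\D(y)\le t<\D(y_0)+\eps$. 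Letting $y\downarrow y_0$ and then $\eps\downarrow 0$ gives $\inf_{y>y_0}\D(y)\le\D(y_0)$, as desired.

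Next I would handle $\bar\D$. Fix $y_0\in(0,D(T)]$. Monotonicity gives $\lim_{y\uparrow y_0}\bar\D(y)=\sup_{y<y_0}\bar\D(y)\le\bar\D(y_0)$, and again only the reverse inequality is at stake. Take any real $t'$ with $\sup_{y<y_0}\bar\D(y)<t'\le T$ (if $\sup_{y<y_0}\bar\D(y)=T$ there is nothing to prove). For each $y\in[0,y_0)$ we have $\bar\D(y)<t'$, so the set $\{t\in[0,T]\colon D(t)\ge y\}$ — nonempty since $D(T)\ge y_0>y$ — contains some $t<t'$, and then $D(t')\ge D(t)\ge y$. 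Taking the supremum over $y<y_0$ yields $D(t')\ge y_0$, so $t'$ belongs to $\{t\colon D(t)\ge y_0\}$ and $\bar\D(y_0)\le t'$. Since $t'$ was an arbitrary number above $\sup_{y<y_0}\bar\D(y)$, this gives $\bar\D(y_0)\le\sup_{y<y_0}\bar\D(y)$, hence equality.

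I do not expect a genuine obstacle: the argument is elementary, and the only thing to watch is the bookkeeping around the two-branch definition of $\D$, the two endpoints $y_0=0$ and $y_0=D(T)$, and the degenerate situations where the relevant infimum or supremum equals $T$. It may be worth remarking that, in contrast with Lemma~\ref{lem:pseudo}, this statement uses only the monotonicity of $D$, not its upper semicontinuity.
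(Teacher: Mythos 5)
Your proof is correct, and it takes a genuinely different route from the paper's. You reduce both semicontinuity claims to one-sided continuity of a monotone function and then verify right-continuity of $\D(\dot)$ and left-continuity of $\bar\D(\dot)$ directly from the infimum definitions; the paper instead argues topologically for $\D(\dot)$ (showing each sublevel set $\{y\colon\D(y)<\alpha\}$ is open, which is in substance the same computation as your right-continuity step, just phrased with neighborhoods) and sequentially for $\bar\D(\dot)$ (a diagonal extraction from a doubly indexed family $(t_{n,k})$, followed by passing to the limit). The most notable difference is in the $\bar\D(\dot)$ half: the paper's sequential argument needs the upper semicontinuity of $D(\dot)$ to conclude $D(\lim t_n)\geq\lim y_n$ from a nonincreasing sequence $t_n$, whereas your argument — for each $y<y_0$ produce $t<t'$ with $D(t)\geq y$, deduce $D(t')\geq y$ by monotonicity, and take the supremum over $y$ — uses only that $D(\dot)$ is nondecreasing. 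So your closing remark is accurate: your proof establishes the lemma for an arbitrary nondecreasing $D(\dot)$, is shorter, and avoids compactness entirely, at the mild cost of first invoking (and, implicitly, justifying) the standard equivalence between semicontinuity and one-sided continuity for monotone functions.
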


\begin{proof}
We first prove that $\D(\dot)$ is upper semicontinuous. Let $\alpha$ be some real number such that $\{y\colon\D(y)<\alpha\}$ is nonempty and take from it an arbitrary element $y_0$. We want to prove that $\{y\colon\D(y)<\alpha\}$ is open for the induced topology on $[0,D(T)]$. If $y_0=D(T)$, then this set is $[0,D(T)]$ and thus open. Otherwise, by the definition of $\D(\dot)$, we know that there exists $t_0<\alpha$ such that $D(t_0)>y_0$. For any element $y$ in $[0,D(t_0))$, we have $\D(y)\leq t_0$, and thus $[0,D(t_0))$ is an open set containing $y_0$ and fully contained in $\{y\colon\D(y)<\alpha\}$. The set $\{y\colon\D(y)<\alpha\}$ is thus an open set of $[0,D(T)]$ for every real number $\alpha$, which precisely means that $\D(\dot)$ is upper semicontinuous.

We prove now that $\bar\D(\dot)$ is lower semicontinuous. Let $\alpha$ be some real number. Consider a converging sequence $(y_n)$ such that $\bar\D(y_n)\leq\alpha$ for all $n$. For every fixed $n$, there exists thus a sequence $(t_{n,k})$ indexed by $k$ such that $D(t_{n,k})\geq y_n$ and $t_{n,k}\leq\alpha+\frac 1 k$ for all $k$. Now, consider the sequence $(t_{n,n})$. It is such that $D(t_{n,n})\geq y_n$ and $t_{n,n}\leq\alpha+\frac 1 n$ for all $n$. Since $[0,T]$ is compact, we can extract a nonincreasing converging subsequence $(t_n)$ from the sequence $(t_{n,n})$ such that $D(t_n)$ converges towards some real number nonsmaller than $\lim_{n\to\infty}y_n$ with $t_n\leq\alpha+\frac 1 n$ for all $n$. It implies that $\bar\D(\lim_{n\to\infty}y_n)\leq \alpha$, which means that $\bar\D(\dot)$ is lower semicontinuous.
\end{proof}

\begin{lemma}\label{lem:increas_semicont}
If $D(\dot)$ is increasing, then $\D(y)=\bar\D(y)$ for every $y\in[0,D(T)]$.
\end{lemma}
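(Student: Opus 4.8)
The plan is to rely on the inequality $\D(y)\geq\bar\D(y)$ already recorded before Lemma~\ref{lem:pseudo}, so that only the reverse inequality $\D(y)\leq\bar\D(y)$ remains to be shown for every $y\in[0,D(T)]$. Here ``increasing'' is understood in the strict sense, so the key structural fact I intend to use is that $t_1<t_2$ implies $D(t_1)<D(t_2)$; the other ingredient is Lemma~\ref{lem:pseudo}, which gives $D(\bar\D(y))\geq y$.

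First I would dispose of the boundary value $y=D(T)$. There $\D(y)=T$ by definition, while $\bar\D(D(T))=\inf\{t\in[0,T]\colon D(t)\geq D(T)\}$; since $D(\dot)$ is strictly increasing, $D(t)<D(T)$ for every $t<T$, so this infimum equals $T$, and the two pseudo-inverses agree. Next I would treat $y<D(T)$. If $\bar\D(y)=T$ there is nothing to prove, as $\D(y)\leq T$ always. So assume $\bar\D(y)<T$. By Lemma~\ref{lem:pseudo} we have $D(\bar\D(y))\geq y$, and then for every $t$ with $\bar\D(y)<t\leq T$ strict monotonicity yields $D(t)>D(\bar\D(y))\geq y$. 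Hence the set $\{t\in[0,T]\colon D(t)>y\}$ contains the whole interval $\big(\bar\D(y),T\big]$, so its infimum is at most $\bar\D(y)$, i.e.\ $\D(y)\leq\bar\D(y)$. Combining with $\D(y)\geq\bar\D(y)$ gives equality.

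I do not expect a genuine obstacle here: the argument is essentially a two-line case analysis. The only points requiring a little care are to read ``increasing'' as strict (otherwise the statement fails, e.g.\ on a plateau of $D(\dot)$) and to handle the two endpoint situations $y=D(T)$ and $\bar\D(y)=T$ separately, since there the set defining $\D(y)$ may be empty and the convention $\D(y)=T$ (resp.\ the trivial bound $\D(y)\leq T$) must be invoked. Upper semicontinuity of $D(\dot)$ is not needed beyond its use inside Lemma~\ref{lem:pseudo}.
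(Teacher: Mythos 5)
Your proof is correct and follows essentially the same route as the paper's: both reduce to showing $\D(y)\leq\bar\D(y)$, dispose of the case $\bar\D(y)=T$ trivially, and otherwise use strict monotonicity together with $D(\bar\D(y))\geq y$ from Lemma~\ref{lem:pseudo} to conclude that every $t>\bar\D(y)$ satisfies $D(t)>y$. Your separate treatment of $y=D(T)$ is a slightly more explicit bookkeeping of a case the paper absorbs into the same argument, but the substance is identical.
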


\begin{proof}
If $\bar\D(y)=T$, then the equality is obvious. We can thus assume that $\bar\D(y)<T$.
For every $t>\bar\D(y)$, we have $D(t)>D(\bar\D(y))$ since $D(\dot)$ is increasing, and Lemma~\ref{lem:pseudo} implies that $D(t)>y$. By definition of $\D(\dot)$, we have $\D(y)\leq \bar\D(y)$. The reverse inequality being clear from the definitions, we get the result.
\end{proof}

\subsection{Mathematical model}

For the four problems \Pmaxnoret{}, \Pavenoret{}, \Pmaxret{}, and \Paveret{}, a feasible solution is characterized by two nondecreasing sequences of nonnegative real numbers $\dd=d_1,d_2,\ldots$ and $\yy=y_1,y_2,\ldots$. The $d_j$'s are the successive departure times of the shuttles, and the $y_j$'s are their successive cumulative loads: the $j$th departure occurs at time $d_j$ with a load of $y_j-y_{j-1}$ users, where we set $y_0=0$.

Denote by $\gmax(\dd,\yy)$ the value of the maximum waiting time and by $\gave(\dd,\yy)$ the value of the average waiting time. There are explicit expressions of these objective functions. Note that $\D(y_{j})$ can be interpreted as the first arrival time of a user leaving with the ``$(j+1)$th shuttle''. 

\begin{eqnarray*}
\gmax(\dd,\yy)&=&\max_{j\colon y_j>y_{j-1}}\big(d_j-\D(y_{j-1})\big),\\
\gave(\dd,\yy)&=&\frac 1 {D(T)} \sum_j\int_{y_{j-1}}^{y_j}(d_j-\bar\D(y))dy,
\end{eqnarray*}
where the indices $j$ range over all departures.

Problems \Pmaxnoret{} and \Pavenoret{} can be written under the following form,
\begin{equation}\label{Pnoreturn}\tag{P$_{\mbox{\textup{\tiny no return}}}$}
\begin{array}{rl@{\hspace{1cm}}rr}
\Min & g(\dd,\yy) & \\
\mbox{s.t.} & y_j-y_{j-1}\leq C & j=1,\ldots,S & \textup{(i)}\\
& y_{j-1}\leq y_j & j=1,\ldots,S &  \textup{(ii)} \\
& d_{j-1}\leq d_j &   j=2,\ldots,S& \textup{(iii)}\\
& y_S=D(T) &  & \textup{(iv)}\\
& \bar\D(y_j)+\nu(y_j-y_{j-1})\leq d_j & j=1,\ldots,S & \textup{(v)}\\
& y_0=0, &  & 
\end{array}
\end{equation}
where $g(\dot)$ is either $\gmax(\dot)$ or $\gave(\dot)$. Constraint (i) ensures that the total amount of users in any shuttle does not exceed the shuttle capacity. Constraint (ii) ensures that the indices of the $y_j$ variables are consistent. Constraint (iii) ensures that the shuttles do not overtake. Constraint (iv) ensures that every user eventually leaves the terminal in a shuttle. Constraint (v) ensures that the departure time of a shuttle occurs once the last user of this shuttle has arrived and the loading is over.

Problems \Pmaxnoret{} and \Pavenoret{} always admit optimal solutions when they are feasible, i.e., when $CS\geq D(T)$. Indeed, $\bar\D(y_j)+\nu(y_j-y_{j-1})$ is upper-bounded by $T+\nu C$ and adding a constraint $d_j\leq T+\nu C$ for all $j$ does not change the optimal value; since $\bar\D(\dot)$ is lower semicontinuous (Lemma~\ref{lem:semicont}), the set of feasible solutions of the optimization problem obtained with this new contraint is compact; its objective function is lower semicontinuous (and even continuous in the case of \Pavenoret{}).


The following properties for \Pmaxnoret{} and \Pavenoret{} will be useful in some proofs.

\begin{claim}\label{claim:change_obj}
Replacing $\gmax(\dot)$ by $\max_{j}\big(d_j-\D(y_{j-1})\big)$ does not change the optimal value of \Pmaxnoret{}.
\end{claim}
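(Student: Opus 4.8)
The plan is to observe that the two optimization problems have exactly the same feasible set --- only the objective changes --- so it suffices to show their optimal values coincide. One inequality is immediate: for every feasible $(\dd,\yy)$, the index set $\{j\colon y_j>y_{j-1}\}$ over which the maximum defining $\gmax(\dd,\yy)$ is taken is contained in $\{1,\dots,S\}$, whence $\gmax(\dd,\yy)\le\max_j\big(d_j-\D(y_{j-1})\big)$. Consequently the optimal value of the modified problem is at least that of \Pmaxnoret{}.

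For the reverse inequality I would take an arbitrary feasible solution $(\dd,\yy)$, set $v:=\gmax(\dd,\yy)$, and build a feasible solution $(\dd',\yy)$ with the \emph{same} loads such that $\max_j\big(d'_j-\D(y_{j-1})\big)\le v$; this gives $\inf\max_j\big(d_j-\D(y_{j-1})\big)\le v$, and taking the infimum over all feasible $(\dd,\yy)$ shows the modified optimal value is at most that of \Pmaxnoret{}. The point is that an empty shuttle ($y_j=y_{j-1}$) carries nobody, so its departure time matters only through constraints (iii) and (v) and can therefore be pushed as early as possible. Concretely, set $d'_0:=0$ and, for $j=1,\dots,S$, let $d'_j:=d_j$ if $y_j>y_{j-1}$ and $d'_j:=\max\big(d'_{j-1},\bar\D(y_j)\big)$ if $y_j=y_{j-1}$.

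The verification is a single induction on $j$ carrying the two statements (a) $d'_j\le d_j$ and (b) $d'_j-\D(y_{j-1})\le v$. For (a): if $y_j>y_{j-1}$ this is an equality; if $y_j=y_{j-1}$ then $d'_{j-1}\le d_{j-1}\le d_j$ by the induction hypothesis and (iii), while $\bar\D(y_j)\le d_j$ by (v) (the load being $0$), so $d'_j\le d_j$. Given (a), monotonicity $d'_{j-1}\le d'_j$ and constraint (v) for $\dd'$ are routine, and constraints (i), (ii), (iv) involve only $\yy$; hence $(\dd',\yy)$ is feasible. For (b): if $y_j>y_{j-1}$, then $d'_j-\D(y_{j-1})=d_j-\D(y_{j-1})\le v$ since this term occurs in $\gmax(\dd,\yy)$. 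If $y_j=y_{j-1}$ and the maximum defining $d'_j$ is attained at $\bar\D(y_j)$, then $\bar\D(y_j)=\bar\D(y_{j-1})\le\D(y_{j-1})$, so the term is $\le 0\le v$; here $v\ge 0$ because $y_S=D(T)>0$ forces some shuttle to be nonempty, and for it $d_j\ge\bar\D(y_j)\ge\D(y_{j-1})$. If $y_j=y_{j-1}$ and the maximum is attained at $d'_{j-1}$, then the induction hypothesis gives $d'_{j-1}-\D(y_{j-2})\le v$, and since $\D$ is nondecreasing with $y_{j-1}\ge y_{j-2}$ by (ii) we get $d'_j-\D(y_{j-1})=d'_{j-1}-\D(y_{j-1})\le d'_{j-1}-\D(y_{j-2})\le v$. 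The base case $j=1$ (where $y_0=0$ and $d'_0=0$) is checked directly.

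The only mildly delicate point is setting up the induction so that the empty-shuttle subcase ``$d'_j=d'_{j-1}$'' can be discharged; it works because the index dropping from $y_{j-1}$ to $y_{j-2}$ is compensated by monotonicity of $\D$ together with constraint (ii). Everything else is straightforward bookkeeping with the constraints, so I do not expect a genuine obstacle.
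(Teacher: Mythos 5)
Your proof is correct and follows essentially the same strategy as the paper: keep the load vector $\yy$ and replace the departure times by earlier feasible ones so that the unrestricted maximum is controlled by $\gmax(\dd,\yy)$. The only (harmless) difference is in execution: the paper retimes \emph{all} shuttles to their earliest feasible departures and then argues that the unrestricted argmax can be relocated to a nonempty shuttle, whereas you retime only the empty shuttles and bound every term $d'_j-\D(y_{j-1})$ directly by induction; both verifications go through.
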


\begin{proof}
Let $(\dd,\yy)$ be a feasible solution of \Pmaxnoret{}. We are going to build a feasible solution $(\dd',\yy)$ (with the same $\yy$) such that 
\begin{equation}\label{eq:comp}
\gmax(\dd,\yy)\geq\gmax(\dd',\yy)=\max_{j}\big(d'_j-\D(y_{j-1})\big).
\end{equation}

We set $d_1'=\bar\D(y_1)+\nu y_1$ and define inductively $d'_j=\max(d_{j-1}',\bar\D(y_j)+\nu(y_j-y_{j-1}))$. We have $d_j'\leq d_j$ for all $j$ and it implies the inequality in \eqref{eq:comp}. Let us prove the equality of \eqref{eq:comp}:
if $\max_{j}\big(d'_j-\D(y_{j-1})\big)$ is attained for a $\bar\jmath$ such that $y_{\bar\jmath-1}<D(T)$, then there exists $k\geq\bar\jmath$ such that $y_k>y_{k-1}=y_{\bar\jmath-1}$ and $d_k'\geq d_{\bar\jmath}'$, which means that the maximum is also attained for a $k$ such that $y_k>y_{k-1}$; and if $\max_{j}\big(d'_j-\D(y_{j-1})\big)$ is attained for a $\bar\jmath$ such that $y_{\bar\jmath-1}=D(T)$, then there exists $\ell\leq \bar\jmath-1$ such that $y_{\ell-1}<y_{\ell}=y_{\bar\jmath-1}$ and by construction $d'_{\ell}=d'_{\ell+1}=\cdots=d'_S$ (since $y_{\ell}=y_{\ell+1}=\cdots=y_S=D(T)$), which means that the maximum is also attained for an $\ell$ such that $y_{\ell}>y_{\ell-1}$.
\end{proof}

\begin{claim}\label{claim:remove_dj}
If $D(\dot)$ is increasing, for any of \Pmaxnoret{} and \Pavenoret{}, there is an optimal solution such that $d_j=\bar\D(y_j)+\nu(y_j-y_{j-1})$ for all $j\in\{1,\ldots,S\}.$
\end{claim}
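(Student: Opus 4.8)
The plan is to eliminate the departure variables and reduce the statement to a monotonicity property of the loads, which is then forced by local load‑rebalancing. Write $r_j:=\bar\D(y_j)+\nu(y_j-y_{j-1})$, so that constraint~(v) reads $r_j\le d_j$ and the assertion to prove is that some optimal solution has $d_j=r_j$ for every $j$. For a fixed feasible $\yy$, the componentwise smallest $\dd$ satisfying (iii) and (v) is $d_j=\max_{i\le j}r_i$; since $\gave(\dot)$ is nondecreasing in each $d_j$, and since by Claim~\ref{claim:change_obj} together with Lemma~\ref{lem:increas_semicont} we may replace $\gmax(\dot)$ by $\max_{j}(d_j-\bar\D(y_{j-1}))$, which is likewise nondecreasing in each $d_j$, both problems admit an optimal solution with $d_j=\max_{i\le j}r_i$. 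So it is enough to produce an optimal $\yy$ with $r_1\le r_2\le\cdots\le r_S$, since then $d_j=r_j$ for all $j$.

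The hypothesis that $D(\dot)$ is increasing is used twice: first through $\D=\bar\D$ (Lemma~\ref{lem:increas_semicont}), already invoked above; second to obtain that $\bar\D(\dot)$ is \emph{continuous}. Indeed $\bar\D(\dot)$ is left‑continuous because it is nondecreasing and lower semicontinuous (Lemma~\ref{lem:semicont}), and if it jumped at some $y_0$ then, using $D(\bar\D(y_0))\ge y_0$ from Lemma~\ref{lem:pseudo}, the function $D(\dot)$ would be constant on the nondegenerate interval $[\bar\D(y_0),\lim_{y\downarrow y_0}\bar\D(y))$, contradicting that it is increasing. Hence $y\mapsto\bar\D(y)+\nu(y-c)$ is continuous and nondecreasing for every constant $c$, which is what makes the rebalancing equations below solvable.

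Now take an optimal solution with $d_j=\max_{i\le j}r_i$ and partition $\{1,\dots,S\}$ into maximal blocks of consecutive shuttles sharing a common departure time $d$. Inside a block $\{k,\dots,m\}$ one has $r_k=d$ and $r_i\le d$ for $k<i\le m$, and $(r_j)$ fails to be nondecreasing precisely when some block carries an index $i>k$ with $r_i<d$. This slack is removed by two local moves, each keeping the solution feasible and optimal. \emph{(a)} Shifting load between two consecutive shuttles of the same block: $\gave$ is unchanged (the boundary integral is taken with the same departure time on both sides), and $\max_{j}(d_j-\bar\D(y_{j-1}))$ does not increase (only the first‑boarding time of the later of the two shuttles is affected, and either it becomes later, or it stays no earlier than that of the other shuttle, which has the same departure time and whose waiting term is already present). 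By continuity of $\bar\D$ one may shift precisely enough to bring each non‑final shuttle of the block up to $r_i=d$; a shuttle that thereby reaches capacity automatically satisfies $r_i\ge r_{i-1}$, hence $r_i=d$ once $r_{i-1}=d$. \emph{(b)} Shifting load into the last shuttle of a block from the first shuttle of the next block, which departs strictly later: this \emph{strictly} decreases $\gave$ unless it is infeasible, so it is never available at a $\gave$‑optimum, and for $\gmax$ it only postpones some first‑boarding times. Iterating these moves drives every $r_i$ up to its block value.

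The main difficulty is turning moves~(a)–(b) into a \emph{terminating} procedure that leaves $d_j=r_j$ for \emph{every} $j$. One must deal with shuttles that become empty after a rebalancing (they may then be released to depart together with the anchor of their block), with the final block (where load should be packed toward its end, so that the block dissolves), and above all with the verification that the rebalancings never increase $\max_{j}(d_j-\bar\D(y_{j-1}))$ when saturated shuttles are involved — a computation carried out from the block inequalities $\max_{j}(d_j-\bar\D(y_{j-1}))\ge d_{j-1}-\bar\D(y_{j-2})\ge r_{j-1}-\bar\D(y_{j-2})\ge\nu(y_{j-1}-y_{j-2})$ and the monotonicity of $\bar\D$. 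A clean way to organize the whole argument is to choose, among all optimal solutions with $d_j=\max_{i\le j}r_i$, one that is extremal for a suitable auxiliary functional (for instance one minimizing $\sum_j d_j$, which already forces the last shuttle of every non‑final block to be full) and to derive a contradiction from any remaining slack by applying move~(a) or~(b).
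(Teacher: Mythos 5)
Your setup is sound and matches the paper's normalization: fixing $\yy$ and taking the componentwise smallest $\dd$, namely $d_j=\max_{i\le j}r_i$ with $r_j:=\bar\D(y_j)+\nu(y_j-y_{j-1})$, is exactly the paper's equation~\eqref{eq:dd}, and your derivation of the continuity of $\bar\D(\dot)$ from the increasingness of $D(\dot)$ is correct (the paper gets the same fact via Lemma~\ref{lem:increas_semicont} and Lemma~\ref{lem:semicont}). The core idea — perturb the loads among consecutive shuttles sharing a departure time, using continuity of $\bar\D(\dot)$ to control the perturbation — is also the paper's. But the proof is not completed: you yourself defer ``the main difficulty'' (termination, the treatment of emptied and saturated shuttles, the final block, and the verification that $\gmax$ does not increase) to a one-sentence plan. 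These are precisely the steps where all the work lies; as written, the argument is a proof strategy, not a proof.

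Beyond incompleteness, the toolkit (a)--(b) is insufficient as stated, because both moves shift load \emph{backward} (into earlier-departing shuttles) in order to raise the interior ready times $r_i$ up to the block value $d$. There are configurations where this cannot succeed: with the block endpoints fixed, the system $r_i=d$ for all $i$ in the block is overdetermined, and move~(b) cannot always supply the missing load — consider the final block, or a block followed only by empty shuttles, where an interior shuttle has $r_i<d$, is below capacity, and there is no later load to pull in. In that situation the only way out is the opposite perturbation: shift load \emph{forward} out of the block's first shuttle, which drops $r_k$ strictly below $d$, lets every departure time in the block strictly decrease, and contradicts optimality. That forward shift is exactly the single perturbation the paper performs (decreasing $y_j$ by $\eps$ for $j=j_0,\dots,j_1-1$, where $j_0,\dots,j_1$ is a maximal run of equal departure times), together with a careful verification of constraints (i)--(v) — including the nonobvious capacity check for the shuttle receiving the extra load — and of the strict decrease of the objective. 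Your suggested auxiliary functional (minimizing $\sum_j d_j$ among optimal solutions) points in the right direction, but the contradiction it is supposed to yield is precisely this forward-shift computation, which you have not carried out.
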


\begin{proof}
Let $(\dd,\yy)$ be an optimal solution (we do not care which objective function is used yet). Without loss of generality, we can assume that  $d_1=\bar\D(y_1)+\nu y_1$ and that for all $j\in\{2,\ldots,S\}$ we have \begin{equation}\label{eq:dd}d_j=\max\big(d_{j-1},\bar\D(y_j)+\nu(y_j-y_{j-1})\big)\end{equation} (just redefine $d_j$ according to these equalities if necessary). When $\nu=0$, a straightforward induction on $j$ shows that we have then always $d_j=\bar\D(y_j)$. We can thus assume that $\nu>0$.

Suppose for a contradiction that there is a $j$ such that $d_j>\bar\D(y_j)+\nu(y_j-y_{j-1})$. Denote by $j_1$ the smallest index for which this inequality holds. We necessarily have $d_{j_1}=d_{j_1-1}$ (because of the equality~\eqref{eq:dd}). Denote by $j_0$ the smallest index $j< j_1$ such that $d_j=d_{j_1}$. Note that since $D(\dot)$ is increasing, we have that $\bar\D(\dot)$ is continuous (it is upper and lower semi-continuous with Lemma~\ref{lem:increas_semicont}).

For some small $\eps>0$, we define $(\bar\dd,\bar\yy)$ as follows:
$$\bar y_j=\left\{\begin{array}{ll} 
y_j-\eps & \mbox{for $j=j_0,\ldots,j_1-1$} \\
y_j & \mbox{otherwise}
\end{array}\right.$$
and
$$\bar d_j=\left\{\begin{array}{ll}  
\max\big(\bar d_{j-1},\bar\D(\bar y_j)+\nu(\bar y_j-\bar y_{j-1})\big) &  \mbox{for $j=j_0,\ldots,j_1$}\\
d_j &  \mbox{otherwise,}
\end{array}\right.$$
where $\bar d_0=0$. We first  check that $(\bar\dd,\bar\yy)$ is a feasible solution of \eqref{Pnoreturn}.

The definition of $j_1$ implies that $d_{j_0}>0$. Thus if $j_0=1$, we have $y_1>0$ and for a small enough $\eps$, the vector $\bar\yy$ satisfies constraint (ii). Otherwise, we have $\bar\D(y_{j_0-1})+\nu(y_{j_0-1}-y_{j_0-2})=d_{j_0-1}<d_{j_0}=\bar\D(y_{j_0})+\nu(y_{j_0}-y_{j_0-1})$. It implies that $y_{j_0-1}<y_{j_0}$ (as otherwise the equality would imply that $y_{j_0-1}<y_{j_0-2}$). Thus, for a small enough $\eps$, we have $\bar\yy$ satisfies constraint (ii). It also satisfies obviously constraint (iv).

For $j\in\{2,\ldots,j_1\}\cup\{j_1+2,\ldots,S\}$, checking $\bar d_{j-1}\leq \bar d_j$ is straightforward. The remaining case is $j=j_1+1$. A direct induction shows that $\bar d_j\leq d_j$ for $j\leq j_1-1$. Since $\bar\D(y_{j_1})+\nu(y_{j_1}-y_{j_1-1})<\bar\D(y_{j_1-1})+\nu(y_{j_1-1}-y_{j_1-2})$ (because $d_{j_1-1}=d_{j_1}$), for $\eps$ small enough, we have $\bar d_{j_1-1}\geq \bar\D(\bar y_{j_1})+\nu(\bar y_{j_1}-\bar y_{j_1-1})$. Here, we use the fact that $\bar\D(\dot)$ is continuous. Thus $\bar d_{j_1}=\bar d_{j_1-1}$. Since we have $\bar d_{j_1-1}\leq d_{j_1-1}$ by the above induction, we finally obtain $\bar d_{j_1}\leq d_{j_1}\leq d_{j_1+1}=\bar d_{j_1+1}$. Therefore, $\bar\dd$ satisfies constraint (iii).

Constraint (i) is satisfied for all $j$, except maybe for $j=j_1$. We have proved that $\bar d_{j_1}=\bar d_{j_1-1}$. Since $\bar d_{j_1-1}=\bar\D(\bar y_j)+\nu(\bar y_j-\bar y_{j-1})$ for some $j'\leq j_1-1$, we have $\bar\D(\bar y_{j_1})+\nu(\bar y_{j_1}-\bar y_{j_1-1})\leq\bar d_{j_1}=\bar\D(\bar y_{j'})+\nu(\bar y_{j'}-\bar y_{j'-1})$, and thus $\nu(\bar y_{j_1}-\bar y_{j_1-1})\leq\nu(\bar y_{j'}-\bar y_{j'-1})\leq \nu C$. Therefore constraint (i) is also satisfied for $j=j_1$.

Since the constraint (v) is clearly satisfied, $(\bar\dd,\bar\yy)$ is a feasible solution of \eqref{Pnoreturn}.

A careful examination of the arguments used when we checked constraint (ii) shows that actually $\bar d_{j_0}< d_{j_0}$. The same induction as the one used we checked constraint (iii) shows that $\bar d_{j_1-1}<d_{j_1-1}$. We have proved that $\bar d_{j_1-1}\geq\bar\D(\bar y_{j_1})+\nu(\bar y_{j_1}-\bar y_{j_1-1})$. Thus $\bar d_{j_1}=\bar d_{j_1-1}$, and $\bar d_{j_1}<d_{j_1}$. We have
$$
\sum_{j=j_0}^{j_1}\int_{\bar y_{j-1}}^{\bar y_j}\big(\bar d_j-\bar\D(u)\big)du\leq\int_{\bar y_{j_0-1}}^{\bar y_{j_1}}\big(\bar d_{j_1}-\bar\D(u)\big)du<\int_{y_{j_0-1}}^{y_{j_1}}\big(d_{j_1}-\bar\D(u)\big)du=\sum_{j=j_0}^{j_1}\int_{y_{j-1}}^{y_j}\big(d_j-\bar\D(u)\big)du,$$ which in contradiction with the optimality assumption. This settles the case of $\gave(\cdot)$. The other case is dealt with similarly.
\end{proof}

Problems \Pmaxret{} and \Paveret{} can be written almost identically under the following form. We use infinitely many variables since there is no \textit{a priori} reason to have a bounded number of departures, and there are indeed special cases for which there is no optimal solution with a finite number of departures. However, if $\pi>0$, we prove that any optimal solution of \Pmaxret{} requires a finite number of departures, see Proposition~\ref{prop:finite_max}. The case of \Paveret{} remains open.
\begin{equation}\label{Preturn}\tag{P$_{\mbox{\textup{\tiny return}}}$}
\begin{array}{rl@{\hspace{1cm}}rr}
\Min & g(\dd,\yy) & \\
\mbox{s.t.} & y_j-y_{j-1}\leq C & j=1,\ldots,+\infty & \textup{(i)}\\
& y_{j-1}\leq y_j & j=1,\ldots,+\infty &  \textup{(ii)} \\
& d_{j-1}\leq d_j &   j=2,\ldots,S & \textup{(iii)}\\
& \ds{\lim_{j\rightarrow+\infty}y_j=D(T)} &  & \textup{(iv)}\\
& \bar\D(y_j)+\nu(y_j-y_{j-1})\leq d_j & j=1,\ldots,+\infty & \textup{(v)}\\
& d_j+\pi+\nu(y_{j+S}-y_{j+S-1})\leq d_{j+S}& j=1,\ldots,+\infty & \textup{(vi)} \\
& y_0=0, &  & 
\end{array}
\end{equation}
where $g(\dot)$ is either $\gmax(\dot)$ or $\gave(\dot)$. Constraints (i), (ii), (iii), (iv), and (v) have the same meaning as for the previous problems. Constraint (vi) ensures that the time between two consecutive departures of a same shuttle is not smaller than the time required for a full trip plus the time needed to load the users.

In the model \eqref{Preturn}, the shuttles are not identified. Note however that their schedules can be easily be recovered: the departure times of a shuttle $s$ is of the form 
$$d_s,d_{s+S},d_{s+2S},\ldots$$ and the time at which the loading starts for a shuttle with departure time $d_j$ can be chosen to be $d_j-\nu(y_j-y_{j-1})$ (the loading starts as late as possible).

While it can be shown that problem \Pmaxret{} always admits an optimal solution when it is feasible (see Proposition~\ref{prop:finite_max}), we were not able to settle the case of problem \Paveret{}.


\subsection{Computational model}

We assume that the following operations take constant time:
\begin{itemize}
\item Evaluation of $D(t)$ for any $t\in[0,T]$.
\item Integration of $D(\dot)$ between two values.
\item Evaluation of  $\D(y)$ and $\bar\D(y)$ for any $y\in\R_+$.
\item Evaluation of $\sup\{y\colon\bar\D(y)+\nu y\leq \alpha\}$ for any $\alpha\in\R_+$.
\end{itemize} Note that if $D(\dot)$ is piecewise affine with a natural description, as it is usually the case in practice, these assumptions are easily matched. Moreover, we set as constants of the computational model the capacity $C$, the length of the period $T$, the cumulative demand $D(\dot)$, the loading rate $\nu$, and the return time $\pi$. The complexity functions will be expressed in terms of $S$ and the accuracy of the computed solution.

\section{Main results}\label{sec:mainresults}

In the present section, we present our main findings. Many results state the existence of algorithms with a guarantee that the returned solution has a value close to the optimal value $OPT$ of the considered problem. Except for two easy results -- Corollary~\ref{cor:approx} and Proposition~\ref{prop:finite_max} -- all proofs are postponed to other sections.

We organize the results presented in that section in three subsections. The first subsection -- Section~\ref{subsec:one} -- deals with the special case where $D(\dot)$ is a constant function, i.e., when all users are in the loading terminal from the beginning of the period, and with returns allowed. 
It seems to us that these results are also interesting in themselves, because they form a natural situation for which there is a very efficient algorithm. The second subsection -- Section~\ref{subsec:noret} -- deals with the general case where the shuttles are not allowed to come back, i.e., with the case covered by the problems \Pmaxnoret{} and \Pavenoret{}. The case where the shuttles are allowed to come back, i.e., when we deal with the problems \Pmaxret{} and \Paveret{}, is discussed in Section~\ref{subsec:ret}. 

\subsection{All users in the terminal from the beginning}\label{subsec:one}

In this subsection, we present results regarding the four problems when $D(t)=D(T)$ for all $t\in[0,T]$ (all users are from the beginning in the terminal). For the problems for which return is not allowed (\Pmaxnoret{} and \Pavenoret), an obvious optimal solution is given by $y_j^*=jD(T)/S$ and $d_j^*=\nu D(T)/S$ for $j\in\left\{1,\ldots,S\right\}$ and the optimal value is $\nu D(T)/S$ for both problems, provided that $D(T)\leq CS$ (otherwise, there is no feasible solution at all): the shuttles take all the same amount of users, start immediately the loading process, and have the same departure time.

The rest of the section is devoted to the results regarding the problems \Pmaxret{} and \Paveret. For the first one, there are closed-from expressions for the optimal value and an optimal solution.

\begin{proposition}\label{prop:S1max}
When $D(t)=D(T)$ for all $t\in[0,T]$, the optimal value of \Pmaxret{} is $$\frac{\nu D(T)}{S}+\left(\left\lceil \frac{D(T)}{CS}\right\rceil-1\right)\pi.$$ 
\end{proposition}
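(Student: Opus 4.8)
The plan is the following. Put $K=\bigl\lceil D(T)/(CS)\bigr\rceil$ and $V^{\ast}=\nu D(T)/S+(K-1)\pi$, and assume $C>0$ and $S\geq 1$ (so that the problem is feasible). Since $D(t)=D(T)$ for every $t$, both pseudo-inverses vanish below $D(T)$: $\bar\D(y)=0$ for all $y\in[0,D(T)]$ and $\D(y)=0$ for all $y<D(T)$. Hence, for any feasible $(\dd,\yy)$, the quantity $\gmax(\dd,\yy)$ is simply the largest departure time among the departures carrying at least one user, and \Pmaxret{} reduces to scheduling the $S$ shuttles, all available at time $0$, so as to make the last nonempty departure as early as possible. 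Note also that $(K-1)CS<D(T)\leq KCS$.

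For the upper bound I would exhibit the explicit solution built of $K$ rounds: in round $r\in\{1,\dots,K\}$ each of the $S$ shuttles makes one departure carrying $D(T)/(SK)$ users --- admissible because $D(T)/(SK)\leq C$ --- and the $S$ departures of round $r$ all take place at the common time $r\,\nu D(T)/(SK)+(r-1)\pi$ (the remaining, empty, departures being placed far enough to the right to satisfy (vi)). Ordering the departures round by round, the cumulative loads $y_j$ run through $D(T)/(SK),2D(T)/(SK),\dots,D(T)$ and then stay at $D(T)$, so (i), (ii), (iv) and $y_0=0$ hold; the departure times are nondecreasing, so (iii) holds; and a direct check gives (v) and the same-shuttle constraint (vi), the latter with equality. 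The last nonempty departure, in round $K$ at time $K\,\nu D(T)/(SK)+(K-1)\pi=V^{\ast}$, shows that the optimal value is at most $V^{\ast}$.

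For the lower bound, take any feasible solution of value $t$; one may assume $t<+\infty$, and I treat the main case $\pi>0$, $\nu>0$ (the cases $\pi=0$ or $\nu=0$ are simpler, one term of $V^{\ast}$ being then trivial). For each $s$, let $U_s\geq 0$ be the total number of users carried by shuttle $s$, so $\sum_s U_s=D(T)$, and $m_s$ its number of nonempty departures; by (vi) two consecutive departures of a shuttle differ by at least $\pi$, so $m_s<+\infty$. Chaining (v) and (vi) along the departures of shuttle $s$ and using $\bar\D\equiv 0$ gives $t\geq(m_s-1)\pi+\nu U_s$ for every $s$ with $U_s>0$, while (i) gives $U_s\leq m_sC$. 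Assume for contradiction that $t<V^{\ast}$ and split the shuttles carrying users into $A=\{s:m_s\geq K\}$ and $B=\{s:1\leq m_s\leq K-1\}$. For $s\in A$, combining $t\geq(m_s-1)\pi+\nu U_s$ with $t<V^{\ast}$ gives $\nu U_s\leq t-(K-1)\pi<\nu D(T)/S$, hence $U_s<D(T)/S$; for $s\in B$ we have $U_s\leq(K-1)C$. Summing over $A\cup B$, using $|A|+|B|\leq S$, and distinguishing the three exhaustive cases $A=\emptyset$, $|A|=S$, and $1\leq|A|\leq S-1$, in each one the inequalities $(K-1)CS<D(T)$ and $D(T)>0$ force $D(T)<D(T)$, a contradiction. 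Therefore $t\geq V^{\ast}$, and together with the upper bound the optimal value equals $V^{\ast}$.

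I expect the lower bound to be the main obstacle. Plainly averaging the inequalities $t\geq(m_s-1)\pi+\nu U_s$ over $s$ yields only $t\geq\nu D(T)/S+\pi\bigl(D(T)/(CS)-1\bigr)$, which is strictly weaker than $V^{\ast}$ as soon as $D(T)/(CS)\notin\Z$; recovering the ceiling $K=\lceil D(T)/(CS)\rceil$ is exactly what forces one to use the per-shuttle capacity bounds $U_s\leq m_sC$ jointly with the integrality of the $m_s$, which is the role of the $A/B$ split above.
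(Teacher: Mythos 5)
Your proof is correct. The upper bound is the same explicit equal-split schedule as in the paper (each shuttle makes $K=\lceil D(T)/(CS)\rceil$ equally loaded trips, all shuttles synchronized), and both proofs rest on the same underlying fact for the lower bound, but you package it differently. The paper drops the no-overtake constraint to get a relaxation that decomposes shuttle by shuttle, reducing the lower bound to $\min\{\max_s(\nu D_s+(\lceil D_s/C\rceil-1)\pi)\colon\sum_s D_s=D(T)\}$, which is settled in one line by monotonicity of the per-shuttle cost. You instead stay with the original constraints, chain (v) and (vi) along each shuttle's own departure sequence to obtain $t\geq(m_s-1)\pi+\nu U_s$ together with $U_s\leq m_sC$, and then argue by contradiction with the three-way split on $A=\{s\colon m_s\geq K\}$. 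This is valid (the strict inequalities close in each case because $(K-1)CS<D(T)$ and $D(T)>0$), and it has the mild advantage of not needing to justify that the relaxed optimum is achieved without overtaking; but the case analysis is heavier than necessary. It collapses to a direct pigeonhole: some shuttle carries $U_s\geq D(T)/S$, so $m_s\geq U_s/C\geq D(T)/(CS)$ and integrality gives $m_s\geq K$, whence the chained inequality for that single shuttle already yields $t\geq(K-1)\pi+\nu D(T)/S$ --- which is exactly the monotonicity step the paper uses, in disguise.
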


In the proof, we actually provide a closed-form expression for an optimal solution. For \Paveret{} however, there does not seem to be a closed-form expression for an optimal solution, and not even for the optimal value. There is nevertheless an efficient algorithm.

\begin{proposition}\label{prop:S1}
Suppose $\pi>0$. When $D(t)=D(T)$ for all $t\in[0,T]$, the optimal value of \Paveret{} can be computed in constant time and an optimal solution can be computed in $O(S)$. 
\end{proposition}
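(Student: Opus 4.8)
The plan is to exploit the structure forced by the special case $D(\dot)\equiv D(T)$: since all users are present at time $0$, we have $\bar\D(y)=0$ for all $y\in[0,D(T)]$, so constraint (v) simply becomes $\nu(y_j-y_{j-1})\le d_j$, and the objective reduces to $\gave(\dd,\yy)=\frac{1}{D(T)}\sum_j(y_j-y_{j-1})d_j$, a linear functional in the $d_j$ once the loads are fixed. First I would argue that it suffices to consider solutions with a finite number $N$ of departures (using $\pi>0$ and an argument analogous to Proposition~\ref{prop:finite_max}), and that in an optimal solution the departure constraints (iii), (v), (vi) are tight in the natural way: $d_j=\max\big(d_{j-1},\nu(y_j-y_{j-1}),d_{j-S}+\pi+\nu(y_j-y_{j-1})\big)$, with the convention $d_{j-S}=0$ and $d_0=0$ for $j\le S$. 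One then checks, as in Claim~\ref{claim:remove_dj}, that pushing the $d_j$ down to these values can only decrease the objective, so an optimal schedule is determined by the load vector alone.

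Next I would analyze the combinatorial shape of the departure times. With all loads nonnegative and the overtaking constraint (iii), the tight recursion above shows that the departures split into $S$ ``rounds'': shuttle $s$ (i.e.\ indices $s, s+S, s+2S,\dots$) departs at times that are cumulative sums of $\pi$ plus loading terms. Writing $k=\lceil D(T)/(CS)\rceil$ for the number of rounds needed (every round can carry at most $CS$ users), I expect the key structural lemma to be: there is an optimal solution using exactly $kS$ departures, in which all departures of round $r$ ($r=1,\dots,k$) occur simultaneously at a common time $t_r$, with $t_1=\nu w_1$ and $t_{r}=t_{r-1}+\pi+\nu w_r$ where $w_r\ge 0$ is the per-shuttle load in round $r$ (so $S w_r$ users leave in round $r$), subject to $w_r\le C$ and $S\sum_r w_r=D(T)$. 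This ``synchronization'' claim follows by a swapping/averaging argument: among departures in the same round, equalizing their times and loads does not increase $\frac{1}{D(T)}\sum (y_j-y_{j-1})d_j$ because the cost is convex in that exchange and the constraints are symmetric across the $S$ shuttles. The objective then becomes, after substituting $t_r=\sum_{i\le r}(\nu w_i)+(r-1)\pi$ and the load in round $r$ being $Sw_r$,
\begin{equation*}
\gave=\frac{S}{D(T)}\sum_{r=1}^{k} w_r\Big((r-1)\pi+\nu\sum_{i=1}^{r}w_i\Big),
\end{equation*}
to be minimized over $w_1,\dots,w_k\ge 0$ with $w_r\le C$ and $\sum_{r=1}^k w_r=D(T)/S$.

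The final step is to solve this finite convex program. It is a quadratic program in $k$ variables with a box constraint and one equality constraint; writing $W=D(T)/S$ and expanding, the quadratic part is $\frac{\nu S}{D(T)}\big(\sum_r w_r^2 + 2\sum_{i<r}w_iw_r\big)=\frac{\nu S}{D(T)}W^2$ — wait, that part is actually constant! — so only the linear term $\frac{\pi S}{D(T)}\sum_r (r-1)w_r$ genuinely depends on the split. Hence minimizing $\gave$ reduces to minimizing $\sum_{r=1}^k (r-1)w_r$ subject to $0\le w_r\le C$, $\sum w_r=W$: we greedily put as much load as possible in the earliest rounds, i.e.\ $w_1=\cdots=w_{k-1}=C$ and $w_k=W-(k-1)C$. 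This gives a closed-form optimal value computable in constant time, and the corresponding schedule (the $kS$ departure times, each repeated $S$ times) is read off in $O(S)$ time, as claimed; note $k$ itself is a model constant since $C,D,S$... actually $S$ is a variable, but $k=\lceil D(T)/(CS)\rceil$ is evaluated in constant time and the $O(S)$ bound comes only from listing the $S$ shuttles' identical schedules.

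The main obstacle I anticipate is rigorously establishing the synchronization lemma — that an optimal solution may be taken with all $S$ shuttles departing simultaneously in each round and with equal loads — since a priori an optimal solution could stagger departures within a round or use an unbounded number of departures; handling the infinite-variable formulation and the exchange argument carefully (as opposed to the clean finite QP that results) is where the real work lies. The reduction of the QP to a trivial linear program, by contrast, should be routine once the constant quadratic term is observed.
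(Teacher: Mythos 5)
Your overall strategy (reduce to a per-round load vector, then minimize a finite convex function of the loads) is broadly the route the paper takes, and your synchronization step corresponds to what the paper does by dropping the no-overtaking constraint, solving per shuttle, and using convexity of the per-shuttle value function to justify the equal split $D_s=D/S$. But there is a fatal algebraic error in your last step. Expanding the objective, the quadratic part is
\begin{equation*}
\nu\sum_{r=1}^{k}w_r\sum_{i=1}^{r}w_i\;=\;\nu\Big(\sum_r w_r^2+\sum_{i<r}w_iw_r\Big)\;=\;\frac{\nu}{2}\Big(W^2+\sum_r w_r^2\Big),
\end{equation*}
not $\nu\big(\sum_r w_r^2+2\sum_{i<r}w_iw_r\big)=\nu W^2$: each cross term appears once, not twice. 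The non-constant piece $\frac{\nu}{2}\sum_r w_r^2$ survives (it is exactly the $\frac12\nu\sum_j x_j^2$ term in the paper's program $P(D)$), so the reduced problem is a genuine quadratic program, not a linear one, and your greedy front-loading solution $w_1=\cdots=w_{k-1}=C$, $w_k=W-(k-1)C$ is not optimal in general: when $\nu C$ is large relative to $\pi$ it pays to spread the load over more than $\lceil W/C\rceil$ rounds so as to shrink $\sum_r w_r^2$, at the price of extra $\pi$'s. The paper's Lemma~\ref{lem:y_p0} derives the true optimal shape via the KKT conditions: a capacity-saturated prefix of length $a$ followed by a linearly decreasing tail whose length $\theta(a)-a$ is of order $\sqrt{\nu C(D-aC)/\pi}$, and the algorithm enumerates the $O(1)$ candidate values of $a$. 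The paper explicitly remarks that no closed-form expression for the optimal value seems to exist, which your conclusion would contradict.

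A second, smaller gap: your reduction to finitely many departures ``as in Proposition~\ref{prop:finite_max}'' does not transfer, because for the average objective a solution with infinitely many nonempty departures can still have finite cost (the argument there uses that the last users wait at least $(k/S-1)\pi$, which blows up the maximum but not the average). The paper instead proves $v_0(D-\eps)\le v(D)\le v_0(D)$ (Lemma~\ref{lem:D2}) and uses convexity and continuity of $v_0(\dot)$ to conclude $v(D)=v_0(D)$; you would need some such truncation argument before restricting attention to finitely supported load vectors.
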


 If $\pi=0$, the optimal value is $\frac {\nu D(T)}{2S}$, and it is not too difficult to see that there is no optimal solution. In a transportation context, $\pi=0$ looks unrealistic. However, the chemical application mentioned in the introduction could be a situation where this equality could be met: as soon as the test is over for a batch, we can start the test for a new one.

\subsection{When return is not allowed}\label{subsec:noret}
We have the existence of an efficient approximation algorithm for \Pmaxnoret{}. The algorithm is actually an easy binary search (Section~\ref{subsubsec:algo}).

 \begin{theorem}\label{thm:pmax}
 Let $\rho>0$. A feasible solution $(\dd,\yy)$ of \Pmaxnoret{} -- if the problem is feasible -- satisfying $\gmax(\dd,\yy)\leq OPT + \rho$ can be computed in $O\left(S\log{\frac 1 \rho}\right)$.
 \end{theorem}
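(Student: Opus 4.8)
The plan is to set up a decision version of \Pmaxnoret{} and solve it by binary search on the target value $t$ of the maximum waiting time. Fix a candidate threshold $t\ge 0$. I want to decide whether there exists a feasible solution $(\dd,\yy)$ with $\gmax(\dd,\yy)\le t$, and if so produce one. By Claim~\ref{claim:change_obj} it suffices to work with the objective $\max_j(d_j-\D(y_{j-1}))$, which is more convenient because it is indexed over \emph{all} shuttles, not only the nonempty ones. Moreover, as in the proof of Claim~\ref{claim:change_obj}, given any feasible $\yy$ the best choice of departure times is the greedy one: $d_1=\bar\D(y_1)+\nu y_1$ and $d_j=\max\big(d_{j-1},\bar\D(y_j)+\nu(y_j-y_{j-1})\big)$ for $j\ge 2$. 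So the decision reduces to: does there exist a nondecreasing sequence $0=y_0\le y_1\le\cdots\le y_S=D(T)$ with $y_j-y_{j-1}\le C$ such that the induced greedy departures satisfy $d_j\le \D(y_{j-1})+t$ for every $j$?

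Next I would show this decision problem is solved by a simple forward greedy loop on $j=1,\dots,S$ that, having fixed $y_1,\dots,y_{j-1}$ and $d_{j-1}$, loads the $j$th shuttle as heavily as possible subject to all constraints. Concretely, $y_j$ must satisfy (a) $y_j\le y_{j-1}+C$ (capacity), (b) $y_j\le D(T)$, and (c) the departure constraint $d_j\le \D(y_{j-1})+t$, where $d_j=\max\big(d_{j-1},\bar\D(y_j)+\nu(y_j-y_{j-1})\big)$. Constraint (c) splits: we need $d_{j-1}\le \D(y_{j-1})+t$ — this depends only on quantities already fixed, so it is just a feasibility check that may cause the procedure to fail — and $\bar\D(y_j)+\nu(y_j-y_{j-1})\le \D(y_{j-1})+t$, i.e. $\bar\D(y_j)+\nu y_j\le \D(y_{j-1})+\nu y_{j-1}+t$. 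Using the constant-time oracle $\sup\{y:\bar\D(y)+\nu y\le\alpha\}$ with $\alpha=\D(y_{j-1})+\nu y_{j-1}+t$, together with the upper bounds from (a) and (b), gives in $O(1)$ the largest admissible $y_j$; lower semicontinuity of $\bar\D(\dot)$ (Lemma~\ref{lem:semicont}) guarantees this supremum is attained, so the choice is legitimate. We set $y_j$ to this maximum and recompute $d_j$. If at some step the check $d_{j-1}\le\D(y_{j-1})+t$ fails, or at the end $y_S<D(T)$, we declare $t$ infeasible; otherwise we output $(\dd,\yy)$.

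The correctness argument is an exchange/monotonicity induction: I claim that after step $j$ of the greedy run, $y_j$ is the maximum possible $j$th cumulative load over all partial feasible solutions (with objective $\le t$) of the first $j$ shuttles, and that among those with this maximal $y_j$ the greedy $d_j$ is the minimum possible departure time. The inductive step is routine — a larger $y_{j-1}$ and smaller $d_{j-1}$ can only relax the constraints on $y_j$ through $\D(\dot)$ nondecreasing and the explicit form of the departure constraint — and it implies that if any feasible solution with value $\le t$ exists then the greedy run succeeds and reaches $y_S=D(T)$. Conversely, any solution the greedy run outputs is feasible by construction, and its value is $\le t$ because every departure was forced to satisfy $d_j\le\D(y_{j-1})+t$; by Claim~\ref{claim:change_obj} this bounds $\gmax$.

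Finally I would assemble the binary search. Feasibility of \Pmaxnoret{} is equivalent to $CS\ge D(T)$ and is checked in $O(1)$. The optimal value lies in $[0,U]$ with $U=T+\nu C$ (the bound already noted in the excerpt, since every $d_j$ can be taken $\le T+\nu C$ and $\D(y_{j-1})\ge 0$); $U$ is a constant of the computational model. Each decision call costs $O(S)$. Running binary search on $[0,U]$ for $\lceil\log_2(U/\rho)\rceil$ iterations narrows the interval to length $\le\rho$; taking $t$ to be the upper endpoint of the final interval, which by the invariant is a feasible threshold, and outputting the corresponding greedy solution, we get $\gmax(\dd,\yy)\le t\le OPT+\rho$. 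The total cost is $O\big(S\log\frac1\rho\big)$ since $U$ is constant. The main obstacle — really the only delicate point — is proving the greedy optimality invariant cleanly, in particular handling the interaction between the $\max$ in the definition of $d_j$ and the threshold constraint, and making sure the ``attained supremum'' step is justified by the semicontinuity of $\bar\D(\dot)$; the rest is bookkeeping.
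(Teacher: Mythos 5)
Your proposal is correct and follows essentially the same route as the paper: a binary search on the threshold combined with an $O(S)$ greedy feasibility check that loads each shuttle as heavily as the capacity, the total demand, and the waiting-time bound allow (the paper encodes this as the system $y_j=\sup\mathcal{S}_j^h$ with $d_j=h+\D(y_{j-1})$ and proves its equivalence with the thresholded problem by maximizing $\sum_j y_j$ over a compact feasible set, where you argue via a greedy-domination induction). The only cosmetic difference is that the paper takes the latest admissible departures, which renders your intermediate check $d_{j-1}\le\D(y_{j-1})+t$ automatic rather than a possible failure mode.
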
 
 With an additional assumption on $D(\dot)$, this theorem provides actually an approximation scheme.
 
 \begin{corollary}\label{cor:approx}
If $D(\dot)$ is increasing, the algorithm of Theorem~\ref{thm:pmax} computes in $O(S\log\frac S {\eps})$ a $(1+\varepsilon)$-approximation for \Pmaxnoret.
\end{corollary}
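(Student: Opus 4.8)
The plan is to deduce Corollary~\ref{cor:approx} from Theorem~\ref{thm:pmax} by showing that when $D(\dot)$ is increasing, the optimal value $OPT$ of \Pmaxnoret{} is bounded below by a quantity of order at least $\rho/\varepsilon$ for a suitable choice of $\rho$, so that an additive error of $\rho$ translates into a multiplicative error of $1+\varepsilon$. Concretely, I would run the algorithm of Theorem~\ref{thm:pmax} with $\rho := \eps \cdot L$, where $L$ is an easily computable lower bound on $OPT$; then $\gmax(\dd,\yy) \le OPT + \rho \le OPT + \eps L \le (1+\eps)OPT$, and the running time becomes $O\!\left(S\log\frac 1 \rho\right) = O\!\left(S\log\frac 1 {\eps L}\right)$. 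It remains to exhibit such an $L$ with $\log\frac 1 L = O(\log S)$, i.e.\ $L$ bounded below by something like a constant over a polynomial in $S$ (recall $C,T,D(\dot),\nu,\pi$ are constants of the computational model).

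The key step is therefore the lower bound on $OPT$ when $D(\dot)$ is increasing. Here is where increasingness is essential: if $D(\dot)$ were constant, $OPT$ could be as small as $\nu D(T)/S \to 0$ with no polynomial lower bound in the relevant sense — actually that is still polynomial, but the real point is that with a general (non-strictly-increasing) demand $OPT$ can be $0$ (all users present at time $0$, $\nu=0$), ruling out any multiplicative guarantee. When $D(\dot)$ is increasing, consider the last shuttle: by constraint~(iv), $y_S = D(T)$, and by constraint~(i), $y_{S-1}\ge D(T) - C$. The first user leaving with the $S$th shuttle arrives at time $\D(y_{S-1})$, and by constraint~(v) together with $d_{S-1}\le d_S$ the $S$th shuttle cannot depart before $\bar\D(D(T)) + \nu(D(T)-y_{S-1})$; more usefully, since $D(\dot)$ is increasing, $\D(y_{S-1}) < \bar\D(D(T))$ strictly whenever $y_{S-1} < D(T)$, and the gap $\bar\D(D(T)) - \D(D(T)-C)$ is a strictly positive constant determined by $D(\dot)$ and $C$ alone. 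Thus $\gmax(\dd,\yy) \ge d_S - \D(y_{S-1}) \ge \bar\D(y_{S-1}) + \nu(y_S - y_{S-1}) - \D(y_{S-1})$, and one extracts a lower bound $OPT \ge \delta$ for an explicit constant $\delta = \delta(D,C,\nu) > 0$ that does not depend on $S$ at all. (If $y_{S-1}=D(T)$, i.e.\ the last shuttle is empty, drop it and argue with the last nonempty shuttle; there must be at least one since $D(T)>0$.)

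With $OPT \ge \delta$ for a fixed constant $\delta$, I set $\rho := \eps\delta$ and invoke Theorem~\ref{thm:pmax}: the returned solution satisfies $\gmax(\dd,\yy) \le OPT + \eps\delta \le OPT + \eps\, OPT = (1+\eps)OPT$, and the running time is $O\!\left(S\log\frac{1}{\eps\delta}\right) = O\!\left(S\log\frac{1}{\eps}\right) = O\!\left(S\log\frac{S}{\eps}\right)$ (with room to spare in the stated bound). The main obstacle is pinning down the constant lower bound $\delta$ cleanly: one must be careful that the "last nonempty shuttle" argument really yields a bound independent of the schedule and of $S$, using only that $D(\dot)$ is strictly increasing on $[0,T]$ so that $\D = \bar\D$ (Lemma~\ref{lem:increas_semicont}) and that $\bar\D$ is strictly increasing — hence $\bar\D(D(T)) - \bar\D(D(T)-C)>0$ is a genuine constant — together with the capacity constraint forcing the last nonempty shuttle's first user to have arrived strictly before $\bar\D(D(T))$. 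Everything else is bookkeeping.
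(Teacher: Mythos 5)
There is a genuine gap in the key step. You reduce the corollary to a lower bound on $OPT$ and then set $\rho=\eps\cdot(\text{lower bound})$ — this is exactly the paper's strategy — but your claimed bound $OPT\geq\delta$ for a constant $\delta>0$ \emph{independent of $S$} is false. Counterexample: $D(t)=t$ on $[0,T]$, $\nu=0$, $C\geq D(T)$; the schedule $y_j=jT/S$, $d_j=jT/S$ is feasible and gives $\gmax(\dd,\yy)=T/S$, so $OPT\to 0$ as $S\to\infty$. The flaw in your last-shuttle argument is a direction error: the waiting time of the last nonempty shuttle is $d_S-\D(y_{S-1})\geq \bar\D(D(T))-\D(y_{S-1})$, and to bound this below you need an \emph{upper} bound on $\D(y_{S-1})$ that is bounded away from $\bar\D(D(T))=T$. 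The capacity constraint only gives $y_{S-1}\geq D(T)-C$, i.e.\ a \emph{lower} bound on $\D(y_{S-1})$; nothing prevents $y_{S-1}$ from being arbitrarily close to $D(T)$, in which case $\D(y_{S-1})$ is arbitrarily close to $T$ and the last shuttle's waiting time tends to $\nu(y_S-y_{S-1})\to 0$. "Strictly positive for every feasible schedule" does not yield a uniform positive infimum.

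The repair is the paper's telescoping argument, which necessarily produces a bound that decays in $S$: with $D(\dot)$ increasing, $\D=\bar\D$ (Lemma~\ref{lem:increas_semicont}), so shuttle $j$'s waiting time is at least $\D(y_j)+\nu(y_j-y_{j-1})-\D(y_{j-1})$; summing over $j=1,\ldots,S$ telescopes to $T+\nu D(T)$, whence $OPT\geq (T+\nu D(T))/S$. Taking $\rho=\eps(T+\nu D(T))/S$ then gives the $(1+\eps)$-guarantee with running time $O\bigl(S\log\frac{S}{\eps}\bigr)$ — and note that the $S$ inside the logarithm, which you dismissed as "room to spare," is exactly the trace of the $1/S$ in the correct lower bound.
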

A schedule for the shuttles requires to specify $S$ real numbers. Taking an output sensitive point of view, this corollary states thus the existence of a polynomial approximation scheme in this particular case.

\begin{proof}[Proof of Corollary~\ref{cor:approx}]
Suppose $D(\dot)$ increasing. Let $(\dd,\yy)$ be a feasible solution. According to Lemma~\ref{lem:increas_semicont}, we have then $\D(y_{j-1})=\bar\D(y_{j-1})$ for every $j$ and the maximum waiting time for shuttle $j$ is at least $\D(y_j)+\nu(y_j-y_{j-1})-\D(y_{j-1})$. Note that if $y_j=y_{j-1}$, this quantity is zero. Hence, the sum of the maximum waiting times over all nonempty shuttles is at least $T+\nu D(T)$ and the optimal value $OPT$ of \Pmaxnoret{} is at least $(T+\nu D(T))/S$. Setting $\rho$ to $\eps(T+\nu D(T))/S$ leads to the result.
\end{proof}

For \Pavenoret{}, there exists an efficient approximation algorithm too. The algorithm is also described later (Section~\ref{subsubsec:algo_ave}). We already outline that this algorithm is not a binary search as in the former case, but consists in building a quite simple weighted ``approximative'' graph, in which a shortest path is computed.

\begin{theorem}\label{thm:pave}
Suppose that $D(\dot)$ admits right derivatives everywhere (denoted $D'_+(t)$) and that $\inf_{t\in[0,T)}D'_+(t)$ is positive. Then, for any positive integer $M$, a feasible solution $(\dd,\yy)$ of \Pavenoret{} -- if the problem is feasible -- satisfying $$\gave(\dd,\yy)\leq OPT+O\left(\frac {S^2} M\right)$$ can be computed in $O\left(SM^3\right)$.
\end{theorem}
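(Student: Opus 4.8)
The plan is to discretize the loading variables $y_j$ onto a grid of $M$ equally spaced values in $[0,D(T)]$ and to reformulate the (essentially combinatorial) problem of choosing the breakpoints $0=y_0\le y_1\le\cdots\le y_S=D(T)$ as a shortest-path computation in a layered weighted graph. First I would invoke Claim~\ref{claim:remove_dj}-type reasoning: under the hypothesis $\inf_{t\in[0,T)}D'_+(t)>0$ the demand $D(\dot)$ is increasing, so by Lemma~\ref{lem:increas_semicont} we have $\D=\bar\D$, and an optimal solution satisfies $d_j=\bar\D(y_j)+\nu(y_j-y_{j-1})$ for all $j$; consequently $\gave(\dd,\yy)$ is a function of $\yy$ alone,
\begin{equation*}
\gave(\yy)=\frac{1}{D(T)}\sum_{j=1}^{S}\int_{y_{j-1}}^{y_j}\bigl(\bar\D(y_j)+\nu(y_j-y_{j-1})-\bar\D(u)\bigr)\,du,
\end{equation*}
a sum of terms each depending only on the consecutive pair $(y_{j-1},y_j)$. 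The capacity constraint (i) becomes $y_j-y_{j-1}\le C$ and the monotonicity constraints (ii)--(iii) are automatic from $d_j=\bar\D(y_j)+\nu(y_j-y_{j-1})$ once $\yy$ is nondecreasing and $\bar\D$ is nondecreasing (one should double-check (iii) here, using that $\bar\D$ is continuous and the cost of a ``wasted'' empty shuttle is controlled).

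Next I would build the graph $G_M$: vertices are the $(M+1)$ grid points $\{kD(T)/M : k=0,\dots,M\}$ replicated across $S+1$ layers (layer $j$ recording that $j$ shuttles have been used), with an arc from $(k,j-1)$ to $(k',j)$ whenever $k'\ge k$ and $(k'-k)D(T)/M\le C$, of weight equal to the exact integral term above evaluated at $y_{j-1}=kD(T)/M$, $y_j=k'D(T)/M$. A shortest path from $(0,0)$ to $(M,S)$ gives the best solution whose breakpoints lie on the grid; computing it by dynamic programming over the layered DAG takes $O(SM^2)$ time for the arcs, and the stated $O(SM^3)$ presumably absorbs the per-arc cost of evaluating an integral plus some slack, or an extra grid refinement — I would reconcile the exponent with the precise arc set actually used. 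The output $(\dd,\yy)$ is then feasible for \Pavenoret{} by construction.

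The heart of the argument, and the step I expect to be the main obstacle, is the error bound $\gave(\dd,\yy)\le OPT+O(S^2/M)$. Here I would take a true optimal $\yys$ and round each $y_j^*$ to the nearest grid point $\hat y_j$ (rounding up, say, to keep $\hat y_S=D(T)$ and to preserve $\hat y_j-\hat y_{j-1}\le C$ — one may need to round the odd-indexed ones up and even-indexed ones down, or argue that a uniform perturbation of size $O(1/M)$ can be absorbed; this is where the positive lower bound on $D'_+$ enters, guaranteeing $\bar\D$ is Lipschitz with constant $1/\inf D'_+$, so that $|\bar\D(\hat y_j)-\bar\D(y_j^*)|=O(1/M)$). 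Each of the $S$ integral terms changes by $O(1/M)$ in the integrand and $O(1/M)$ in the interval length, i.e.\ by $O(1/M)$ after dividing by $D(T)$ — wait, that would only give $O(S/M)$; the extra factor of $S$ comes from the fact that the capacity constraint may force the rounding to cascade (shifting $\hat y_j$ up by $D(T)/M$ can compel $\hat y_{j+1},\hat y_{j+2},\dots$ to shift as well), so a single grid-point displacement propagates through up to $S$ shuttles, and each of the $S$ terms may move by $O(S/M)$, yielding $O(S^2/M)$ in total. Making this cascade bound rigorous — controlling how far a feasibility-restoring adjustment can propagate, and checking that the propagated solution still satisfies (i)--(v) — is the delicate part; everything else is routine bookkeeping and the standard shortest-path-in-a-DAG complexity count.
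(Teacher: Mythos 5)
Your overall strategy (discretize the loads, reduce to a shortest path in a layered DAG, round the optimum onto the grid for the error bound) is indeed the skeleton of the paper's proof, but there is a genuine gap at the point you yourself flag and then wave away: constraint (iii) is \emph{not} automatic once you set $d_j=\bar\D(y_j)+\nu(y_j-y_{j-1})$. We have $d_j-d_{j-1}=\bar\D(y_j)-\bar\D(y_{j-1})+\nu\big((y_j-y_{j-1})-(y_{j-1}-y_{j-2})\big)$, and when $\nu>0$ the second term can be as negative as $-\nu C$ while the first can be arbitrarily small (e.g.\ where $D(\dot)$ is steep or jumps, which the hypotheses permit), so a shuttle following a full one with a light load would overtake it. This is precisely why the objective does \emph{not} decompose into terms depending only on consecutive pairs $(y_{j-1},y_j)$: the no-overtaking constraint couples three consecutive breakpoints. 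Your graph, whose state is only the cumulative load, cannot express this; its shortest path may encode an infeasible schedule, and repairing it by taking $d_j=\max(d_{j-1},\bar\D(y_j)+\nu(y_j-y_{j-1}))$ destroys the per-arc additivity of the cost. The paper's fix is to enlarge the vertex to a pair $(z,r)$ recording both the cumulative load $r$ and the load $z$ of the shuttle that just left, so that the arc condition can enforce (a slightly relaxed version of) constraint (iii). This extra dimension is exactly what accounts for the $O(SM^3)$ complexity you could not reconcile with your $O(SM^2)$ count: the paper's graph has $O(M^2)$ vertices and $O(M^3)$ arcs.

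The second, related imprecision is in the error analysis. The relaxation of the arc condition by $\tfrac12\gamma\eta$ (with $\gamma$ depending on $1/\alpha$ and $\nu$, where $\alpha=\inf D'_+$ gives the Lipschitz constant of $\bar\D$ that you correctly identified) lets the rounded-down optimum survive as a path of weight at most $OPT$; but then the path must be converted back into a \emph{feasible} schedule, and this is done by delaying the $j$th departure by $j\gamma\eta$. It is this cumulative delay --- growing linearly in $j$, hence contributing about $S\gamma\eta$ to each user's waiting time in the worst case --- that produces the $S$-dependent error term, not a cascade of load roundings forced by the capacity constraint as you conjecture (the paper rounds all loads down, so capacity is never violated and nothing propagates on the load side). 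Making your argument rigorous therefore requires (a) augmenting the graph state as above, (b) defining arc weights that provably underestimate the true cost of the corresponding segment, and (c) quantifying the feasibility-restoring delays; without (a) the construction fails outright for $\nu>0$.
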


As for Corollary~\ref{cor:approx} above, this theorem could be interpreted as a polynomial approximation scheme by using the fact that $D(\dot)$ is increasing.

\subsection{When return is allowed}\label{subsec:ret}

The following proposition implies that when $\pi$ is larger than $0$, any optimal solution of \Pmaxret{} requires a finite number of nonempty departures.

\begin{proposition}\label{prop:finite_max}
If $\pi>0$, there exists an optimal solution of \Pmaxret{} and the number of nonempty departures in any optimal solution is at most
$$\left(2\left\lceil\frac {T} \pi \right\rceil +1\right)S+\left(\frac{\nu}{\pi}+\frac 1 C\right)D(T).$$
\end{proposition}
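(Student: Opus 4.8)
\textbf{Proof proposal for Proposition~\ref{prop:finite_max}.}

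The plan is to first establish existence of an optimal solution, and then to bound the number of nonempty departures in any optimal solution. For existence, I would truncate the infinite problem to a finite one: since constraint (vi) forces $d_{j+S}\geq d_j+\pi$, the departure times grow at least linearly in blocks of $S$, so any solution with all its mass delivered cannot postpone departures indefinitely without hurting the objective. Concretely, I would exhibit one explicit feasible solution (e.g.\ fill shuttles to capacity as early as possible, respecting (v) and (vi)) whose value $V_0$ is finite; then argue that in any solution of value at most $V_0$, only finitely many departures can be ``useful'' — a departure $d_j$ with $d_j-\D(y_{j-1})$ or the average contribution exceeding $V_0$ cannot occur in an optimal (or even $V_0$-valued) solution once $\D(y_{j-1})$ is close to $T$, because $\D(y_{j-1})\to T$ and the waiting time would blow up. Restricting to the resulting finite index set, the feasible region becomes compact (using lower semicontinuity of $\bar\D$ as in the \eqref{Pnoreturn} case, together with the upper bound $d_j\le T+\nu C + (\text{number of blocks})\pi$) and the objective is lower semicontinuous, hence a minimum is attained. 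A little care is needed to check that the minimizer of the truncated problem is genuinely optimal for the full problem, i.e.\ that adding further (necessarily empty) departures cannot help.

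For the quantitative bound, I would argue by contradiction: suppose an optimal solution has strictly more than $N := \left(2\left\lceil T/\pi\right\rceil+1\right)S + (\nu/\pi + 1/C)D(T)$ nonempty departures. The idea is that nonempty departures are expensive to schedule: each shuttle can make at most roughly $\lceil T/\pi\rceil$ trips ``during'' the arrival window $[0,T]$ before being forced by (vi) past time $T$, and the $1/C$ and $\nu/\pi$ terms account for the number of shuttle-loads needed to clear $D(T)$ and for the loading-time overhead accumulated along a single shuttle's sequence. More precisely, consider a single shuttle $s$: its departure times $d_s, d_{s+S}, d_{s+2S},\dots$ satisfy $d_{s+(k+1)S}\geq d_{s+kS} + \pi + \nu(y_{s+(k+1)S}-y_{s+(k+1)S-1})$. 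Summing, the $k$th departure of shuttle $s$ occurs no earlier than $k\pi$ plus $\nu$ times the total load carried so far by that shuttle. Departures occurring after all users have arrived — i.e.\ with $\D(y_{j-1})=T$ in the objective for \Pmaxret{}, or more simply once $y_{j-1}=D(T)$ so the departure is necessarily empty — contribute nothing and can be deleted without violating feasibility (empty departures impose no constraint via (i), and (vi) still holds since removing the load only relaxes it; (iii) and (v) are maintained). So in an optimal solution every nonempty departure $j$ has $y_{j-1}<D(T)$, hence $\D(y_{j-1})<T$ strictly, i.e.\ it happens, in a suitable sense, ``before the demand is exhausted.''

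The heart of the counting is then: across all $S$ shuttles, how many nonempty departures can have $\D(y_{j-1})<T$? For each shuttle $s$, if it makes $k_s$ nonempty departures with this property, the $k_s$-th one is at time at least $(k_s-1)\pi$ minus nothing, but also — because shuttle $s$ must have returned — bounded above in terms of $T$: its first departure is at most $T+\nu C$ (by (v), using $\bar\D\le T$), and I would like to say the $k_s$-th nonempty departure of shuttle $s$ is at most $T+\nu C + (\text{something})$. This needs the observation that a nonempty departure with $\D(y_{j-1})<T$ must have $d_j$ not too far beyond $T$ in an optimal solution, since otherwise its waiting time exceeds that of the explicit solution $V_0\le \nu D(T)/S + \lceil D(T)/(CS)\rceil\pi$ (cf.\ Proposition~\ref{prop:S1max} as a sanity ceiling). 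Combining the lower bound $d_{s+(k_s-1)S}\ge (k_s-1)\pi + \nu\cdot(\text{load carried before})$ with the upper bound gives $k_s \le 1 + \lceil T/\pi\rceil + (\nu/\pi)(\text{shuttle-}s\text{ load})$ roughly; summing over $s$ and using $\sum_s(\text{shuttle-}s\text{ load}) = D(T)$ and the capacity bound to count full-versus-partial loads yields the claimed $N$.

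I expect the \emph{main obstacle} to be the quantitative step — pinning down the right upper bound on when a nonempty departure can occur in an optimal solution, i.e.\ converting ``optimal $\Rightarrow$ no wastefully late departures'' into an explicit numeric bound compatible with the stated constant $\left(2\lceil T/\pi\rceil+1\right)S$. The factor $2$ and the $+1$ suggest the authors split late departures into two groups (those with loading time absorbed into the $\nu D(T)/\pi$ term versus a ``pure'' $\pi$-spacing count) and add a slack term per shuttle; getting the bookkeeping to land exactly on their expression, rather than on a similar-looking bound, is the delicate part. The existence part, by contrast, should be routine given the compactness/semicontinuity machinery already set up for \eqref{Pnoreturn}.
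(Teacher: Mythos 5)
Your existence argument is essentially the paper's: a feasible solution with infinitely many nonempty departures has infinite maximum waiting time (constraint (vi) pushes $d_j\to\infty$ while the corresponding users arrived by time $T$), so one may restrict to finitely many departures and invoke compactness plus lower semicontinuity of $\bar\D(\dot)$. That part is fine. The genuine gap is in the counting step, and you correctly identified it yourself: you never actually produce the upper bound on how late a nonempty departure can occur in an optimal solution, and the ``sanity ceiling'' you propose to use, $V_0\le \nu D(T)/S+\lceil D(T)/(CS)\rceil\pi$ quoted from Proposition~\ref{prop:S1max}, is wrong for general $D(\dot)$: that value is the optimum only when all users are present at time $0$. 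For a general demand the correct benchmark is the explicit schedule that waits until time $T$, loads every shuttle at full capacity, and departs immediately, whose value is $T+\nu D(T)/S+\left(\lceil D(T)/(CS)\rceil-1\right)\pi$. The extra $T$ is not a cosmetic difference --- it is exactly what produces the second $\lceil T/\pi\rceil S$ in the stated constant, so an argument built on your $V_0$ cannot land on the claimed bound.

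The paper's accounting is also simpler than your per-shuttle, load-weighted bookkeeping. It splits the nonempty departures into those at time at most $T$ and those after $T$. The first group is at most $\lceil T/\pi\rceil S$ by the $\pi$-spacing of constraint (vi) alone, with no optimality needed. For the second group, of size $k$, the pigeonhole principle gives a shuttle performing at least $k/S$ of these post-$T$ departures, so the last one leaves at time at least $T+(k/S-1)\pi$; its users arrived by time $T$, hence waited at least $(k/S-1)\pi$, and comparing with the benchmark value yields
$$\left(\frac kS-1\right)\pi\;\leq\; T+\frac{\nu D(T)}S+\frac{\pi D(T)}{CS},$$
i.e.\ $k\leq TS/\pi+\nu D(T)/\pi+D(T)/C+S$. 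Adding the two groups gives exactly the stated bound. Your per-shuttle strategy could in principle be pushed through (bounding each shuttle's count $k_s$ by $1+2T/\pi+\nu D(T)/(S\pi)+D(T)/(CS)$ and summing), but as written the key inequality is missing and the ceiling it would compare against is the wrong one.
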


\begin{proof}
A feasible solution with an infinite number of nonempty departures has an infinite objective value and it thus strictly dominated by any solution by a finite number of departures. Thus, the set of feasible solutions can be reduced to the solutions where the number of nonempty departures is finite. Since $\bar\D(\dot)$ is lower semicontinuous (Lemma~\ref{lem:semicont}), the set of feasible solutions is compact and the objective function is lower semicontinuous which leads then to existence of an optimal solution. 

The schedule consisting in making the shuttles wait until time $T$, loading them at full capacity (except maybe for the last departure), and making them leave as soon as the loading is completed provides a feasible solution of \Pmaxret{} with a value $T+\nu D(T)/S+(\lceil D(T)/(CS)\rceil-1)\pi$.

Consider an optimal solution of \Pmaxret{} and denote by $k$ the number of departures after time $T$. The users in the last shuttle to leave have waited at least $(k/S-1)\pi$. We have thus 
$$\left(\frac k S-1\right)\pi\leq T+\frac{\nu D(T)}S+\frac{\pi D(T)}{CS},$$ which implies that $$k\leq\frac {TS} \pi+\frac{\nu D(T)}{\pi}+\frac{D(T)}{C}+S.$$
Before time $T$, the number of departures is at most $\lceil T/\pi\rceil S$.
\end{proof}

The next theorem states that there exists an algorithm computing arbitrarily good feasible solutions for \Pmaxret{} within reasonable computational times when $S$ is small. As for Section~\ref{subsec:noret}, this algorithm is described later in the paper (Section~\ref{sec:ret}). It is based on the computation of a shortest path in an ``approximative'' graph, as for Theorem~\ref{thm:pave}. It also uses Proposition~\ref{prop:finite_max} in a crucial way (actually a slight variation of it: Lemma~\ref{lem:t+}).

\begin{theorem}\label{thm:pmaxret}
Suppose that $D(\dot)$ admits right derivatives everywhere, $\pi$ is positive, and $\inf_{t\in[0,T)}D'_+(t)$ is positive. Then, for any positive integer $M$, a feasible solution $(\dd,\yy)$ of \Pmaxret{} satisfying $$\gmaxret(\dd,\yy)\leq OPT+O\left(\frac {S^2} M\right)$$ can be computed in $O\left(\beta^{3S}M^{3S+2}\right)$, where $\beta$ depends only on the constants of the computational model.
\end{theorem}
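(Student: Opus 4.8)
The plan is to transpose the ``approximative-graph'' technique behind Theorem~\ref{thm:pave} to the setting with returns, the one genuinely new feature being that the coupling constraint (vi) links a departure to the one made $S$ steps earlier, so the state of the underlying dynamic program must remember a whole window of $S$ consecutive departures; it is this widening of the state from $O(1)$ to $\Theta(S)$ coordinates that turns the polynomial running time of Theorem~\ref{thm:pave} into a bound exponential in $S$.

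\medskip\noindent\emph{Step 1: reduction to a finite, bounded instance.} Since $\pi>0$, Proposition~\ref{prop:finite_max} gives an optimal solution with at most $N:=\bigl(2\lceil T/\pi\rceil+1\bigr)S+\bigl(\nu/\pi+1/C\bigr)D(T)=\Theta(S)$ nonempty departures, and (using the announced refinement Lemma~\ref{lem:t+}) one may further assume that all departure times lie in an interval $[0,t^+]$, where $t^+$ depends only on the constants of the computational model and on $S$. Because $\inf_{t\in[0,T)}D'_+(t)>0$, the function $D(\dot)$ is strictly increasing, so $\D=\bar\D$ everywhere by Lemma~\ref{lem:increas_semicont}, and the same lower bound makes $\bar\D$ Lipschitz on $[0,D(T)]$ with constant $1/\inf_t D'_+(t)$. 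These two consequences of the hypothesis on $D(\dot)$ are the only ones I use: they let one convert an error on a load into an error on a time and vice versa.

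\medskip\noindent\emph{Step 2: the approximative graph.} Fix a grid of mesh $\delta$ (a suitable multiple of $1/M$) on $[0,t^+]$ for departure times and a compatible grid of mesh $O(\delta)$ on $[0,D(T)]$ for cumulative loads, with $0$ and $D(T)$ among the grid points. A node of the graph is a ``window'': the index $j\le N$ of the last placed departure together with the rounded departure times and rounded cumulative loads attached to the last $S$ departures -- that is, $\Theta(S)$ discretized scalars -- which is exactly the information that constraints (i)--(iii), (v), (vi) need to decide whether a further departure $j+1$ with grid values $(y_{j+1},d_{j+1})$ is admissible; there are $M^{\Theta(S)}$ nodes. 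From a window at index $j$ I put an arc, for every admissible grid choice of $(y_{j+1},d_{j+1})$, to the corresponding window at index $j+1$, with weight $d_{j+1}-\D(y_j)$ if $y_{j+1}>y_j$ and weight $0$ otherwise. A source is joined to the empty starting window and a sink is reachable exactly from the windows with cumulative load $D(T)$, which encodes constraint (iv). Every source--sink path is then a grid-feasible schedule with at most $N$ departures whose value $\gmaxret(\dd,\yy)$ equals the largest weight along the path (empty departures contribute $0\le\gmaxret$), so it suffices to compute a bottleneck (min--max) shortest path, for instance by binary search on a weight threshold combined with a reachability test in the graph obtained by deleting the heavier arcs. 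Counting nodes, arcs and the cost of this computation -- and absorbing into a constant $\beta$, depending only on the model constants and in particular on $t^+$, the ratio between the number of grid points and $M$ -- yields the running time $O\bigl(\beta^{3S}M^{3S+2}\bigr)$.

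\medskip\noindent\emph{Step 3: the approximation guarantee.} Take an optimal solution $(\dds,\yys)$ with $\le N$ departures and round it onto the grid: replace each $y^*_j$ by the nearest grid load (monotone rounding, shrinking slightly if necessary so that the capacity constraint (i) is kept), and then define the departure times bottom-up by letting $\bar d_j$ be the smallest grid point that is at least $\max\bigl(d^*_j,\ \bar d_{j-1},\ \bar\D(\bar y_j)+\nu(\bar y_j-\bar y_{j-1}),\ \bar d_{j-S}+\pi+\nu(\bar y_j-\bar y_{j-1})\bigr)$, with $\bar d_i=0$ for $i\le 0$. By construction $(\bar\dd,\bar\yy)$ is feasible and corresponds to a source--sink path in the graph, and, writing $\mathrm{err}_j:=\bar d_j-d^*_j$ and using the Lipschitz bound on $\bar\D$ together with $|\bar y_j-y^*_j|=O(\delta)$, one gets a recursion $\mathrm{err}_j\le\max(\mathrm{err}_{j-1},\mathrm{err}_{j-S})+O(\delta)$, hence $\mathrm{err}_j=O(S^2/M)$ for all $j\le N$. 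Therefore $\gmaxret(\bar\dd,\bar\yy)=\max_j\bigl(\bar d_j-\D(\bar y_{j-1})\bigr)\le\max_j\bigl(d^*_j-\D(y^*_{j-1})\bigr)+O(S^2/M)=OPT+O(S^2/M)$, and the path returned by the algorithm is no worse.

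\medskip The step I expect to be the real obstacle is the last one: one must check that the bottom-up repair preserves the window-coupling constraint (vi) at \emph{every} index without the induced errors cascading faster than the displayed recursion, and that the various rounding choices are mutually consistent so that $(\bar\dd,\bar\yy)$ is an actual path of the graph and not merely close to one -- this is where the positivity of $\inf_{t\in[0,T)}D'_+(t)$, through the Lipschitz continuity of $\bar\D$, is indispensable. The graph construction and the bottleneck-path computation of Step~2 are, by contrast, routine once the window state is set up, and the complexity bound is a matter of careful bookkeeping.
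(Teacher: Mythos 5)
Your proposal is correct and follows essentially the same strategy as the paper's: discretize loads and departure times with mesh $O(1/M)$, build a graph whose vertices record the rounded times and loads of $S$ consecutive departures so that the return constraint (vi) can be checked locally, compute a minimum-bottleneck path by dynamic programming, and obtain the $OPT+O(S^2/M)$ guarantee by rounding an optimal solution (made finite and time-bounded via Proposition~\ref{prop:finite_max} and Lemma~\ref{lem:t+}) into a path of this graph. The only differences are organizational: the paper's vertices encode a whole round of $S$ departures with arc weights that deliberately underestimate the true waiting times (so the bottleneck value also certifies a lower bound on $OPT$, feasibility being restored in the reconstruction by the additive shifts $j\tilde\gamma\eta$), whereas you advance one departure per arc and keep the grid schedules exactly feasible -- both routings yield the same error recursion and the same bound.
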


As above, the theorem actually ensures that the algorithm is an approximation scheme since we can bound from below $OPT$ using only the input values. If $S$ is considered as constant, this becomes even a polynomial approximation scheme.\\

We do not know whether there is a counterpart to Proposition~\ref{prop:finite_max} for problem \Paveret{}. If such a counterpart would exist, then almost the same technique as the one used in Section~\ref{sec:ret} would lead to a theorem similar to Theorem~\ref{thm:pmaxret} for \Paveret{}. The existence of such a theorem remains thus open.

\section{All users in the terminal from the beginning}

Consider the case where all users are in the loading terminal from the beginning. To ease the reading, and for the present section only, we use $D$ to denote the quantity $D(T)$. 

We treat first the case of problem \Pmaxret{}.  

\begin{proof}[Proof of Proposition~\ref{prop:S1max}]
For \Pmaxret{}, when $S=1$, an optimal solution is obtained by loading at full capacity the shuttle for each departure (except maybe for the last departure for which the shuttle load is $D-C\lfloor D/C\rfloor$) and by making the shuttle leave immediately after each loading process. The optimal value is then $\nu D+(\lceil D/C\rceil-1)\pi$. When $S>1$, consider the problem Q defined as the problem \Pmaxret{} without the constraint that the shuttles do not overtake (constraint (iii) in \eqref{Preturn}). The optimal value of Q provides a lower bound of the optimal value of \Pmaxret{}. Since there is no constraint linking the different shuttles, problem Q can be solved separately for each shuttle $s$ with a demand $D_s$ to carry, such that $\sum_s D_s=D$. The optimal solutions of Q are thus obtained from the optimal solutions of 
$$\begin{array}{rll}
\Min & \ds{\max_{s\in\left\{1,\ldots,S\right\}}\left(\nu D_s+\left(\left\lceil\frac{D_s}C\right\rceil-1\right)\pi\right)} \\ 
\mbox{s.c.} & \ds{\sum_{s=1}^SD_s=D} & \\
& D_s\geq 0 &  s=1,\ldots,S.
\end{array}$$
The solution given by $D_s=D/S$ for all $s$ is clearly optimal (and it is actually the unique optimal solution when $\nu>0$). Hence, there is an optimal solution for Q in which all shuttles have the same departure times and, for each travel, carry the same amount of users. Its value is $\nu D/S+(\lceil D/(CS)\rceil-1)\pi$. Since the shuttles do not overtake in this optimal solution of Q, it is actually a feasible solution for the problem \Pmaxret{}, and thus an optimal solution for this latter problem (its value being equal to a lower bound).
\end{proof}

The rest of this section is devoted to the proof of Proposition~\ref{prop:S1}, which ensures the existence of an efficient algorithm solving problem \Paveret{} when $D(t)=D$ for all $t\in[0,T]$. We start by considering the special case of problem \Paveret{} when $S=1$. In such a case, it is always beneficial to define $d_j=(j-1)\pi+\nu y_j$. Assuming that the $y_j$'s are given, it provides a feasible solution since $\bar\D(y)=0$ for all $y\in[0,D]$.
The objective function of \Paveret{} becomes thus 
$$\frac 1 D\sum_{j=1}^{+\infty}\left((j-1)\pi+\nu \sum_{i=1}^{j}x_i\right)x_j=\frac 1 D\left(\sum_{j=1}^{+\infty}(j-1)\pi x_j+\frac 1 2\nu\sum_{j=1}^{+\infty}x_j^2\right)+\frac {\nu D} 2 ,$$ where $x_j=y_j-y_{j-1}$. Solving \Paveret{} when $S=1$ reduces thus to solving
 \begin{equation}\label{S1averet}\tag*{$P(D)$}
\begin{array}{rlr}
\Min & \ds{\sum_{j=1}^{+\infty}(j-1)\pi x_j+\frac 1 2\nu\sum_{j=1}^{+\infty}x_j^2} \\
\mbox{s.t.} & \ds{\sum_{j=1}^{+\infty}x_j=D} \\
& 0\leq x_j\leq C & j=1,\ldots,+\infty,
\end{array}
\end{equation}
which is a convex program (with infinitely many variables). We will show that there is always an optimal solution of \ref{S1averet} with a finite support. Then, we will solve $P_0(D)$, defined as the program~\ref{S1averet} with the additional constraint $|\{j\colon x_j\neq 0\}|<+\infty$, with the help of the Karush-Kuhn-Tucker conditions (that do not apply otherwise).

\begin{lemma}\label{lem:y_p0}
Suppose that $\pi>0$. Then $P_0(D)$ has an optimal solution and it is necessarily of the form
$$\begin{array}{l}
x_0^*=0 \\
x_j^*=\left\{
\begin{array}{ll}
C & \quad\mbox{if $j\leq a$,} \smallskip\\ 
\ds{\frac{D-aC}{\theta(a)-a}+\frac\pi\nu\left(\frac{a+\theta(a)+1} 2-j\right)} & \quad\mbox{if $a+1\leq j\leq \theta(a)$,} \smallskip
\\ 
0 & \quad\mbox{otherwise,}
\end{array}\right.
\end{array}$$
with $a\in\Z_+$ such that $a\leq \frac DC$ and where $$\theta(a)=a+\left\lceil\frac{-1+\sqrt{1+\frac{8\nu C}\pi(D-aC)}}{2}\right\rceil.$$
\end{lemma}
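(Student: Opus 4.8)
The plan is to solve the finite-support convex program $P_0(D)$ by reducing it to a problem on a fixed (but unknown) number of nonzero variables and then invoking the Karush–Kuhn–Tucker conditions. First I would argue existence: since in $P_0(D)$ the support is finite, say contained in $\{1,\dots,n\}$ for some $n$, the feasible set $\{x\in[0,C]^n\colon\sum x_j=D\}$ is compact and the objective is continuous, so an optimal solution exists for each fixed $n$; moreover, because $\pi>0$, putting mass on a large index $j$ is expensive, so one can bound a priori the largest index that can carry positive load in an optimal solution (any solution using an index beyond roughly $\frac{D}{C}+\theta$-many slots is dominated by shifting that mass to an earlier empty slot), hence the global optimum over all finite supports is attained. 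I would also note the solution is unique when the relevant part of the objective is strictly convex in the free variables.

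Next I would set up the KKT system. Fix an optimal $\xx^*$ and let $n$ be the largest index with $x_n^*>0$. The Lagrangian is $\sum_{j=1}^n\big((j-1)\pi x_j+\tfrac12\nu x_j^2\big)-\lambda\big(\sum_j x_j-D\big)-\sum_j\mu_j x_j+\sum_j\eta_j(x_j-C)$ with $\mu_j,\eta_j\ge 0$ and complementary slackness. Stationarity gives $(j-1)\pi+\nu x_j^*-\lambda-\mu_j+\eta_j=0$. For an index $j$ with $0<x_j^*<C$ we get $\mu_j=\eta_j=0$, so $x_j^*=\frac{\lambda-(j-1)\pi}{\nu}$, which is affine and strictly decreasing in $j$ with slope $-\pi/\nu$ — this is exactly the middle branch of the claimed formula. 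For an index with $x_j^*=C$ we need $\eta_j\ge0$, i.e. $(j-1)\pi+\nu C\le\lambda$; for $x_j^*=0$ we need $\mu_j\ge0$, i.e. $(j-1)\pi\ge\lambda$. Since $(j-1)\pi+\nu x_j^*$ must be nondecreasing-consistent with these three regimes and the marginal cost $(j-1)\pi$ increases in $j$, the optimal profile is necessarily: full capacity $C$ on a prefix $j\le a$, then a strictly decreasing affine "ramp" on $a+1\le j\le\theta(a)$, then $0$. This monotonic structure (saturated, then interior, then zero) is the key qualitative deduction; it follows because if some later index were at $C$ while an earlier one were interior, a swap of $\eps$ mass strictly decreases the objective.

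It remains to pin down the two parameters from the two active constraints. The equality constraint $\sum_{j=a+1}^{\theta(a)}x_j^* = D-aC$ forces the constant term of the affine ramp: summing the arithmetic progression $x_j^*=\alpha-\frac{\pi}{\nu}(j-a)$ over $j=a+1,\dots,\theta(a)$ and solving for $\alpha$ yields the stated expression $\frac{D-aC}{\theta(a)-a}+\frac{\pi}{\nu}\cdot\frac{a+\theta(a)+1}{2}-\frac{\pi}{\nu}j$, i.e. the middle branch. The endpoint $\theta(a)$ is then the smallest integer $k>a$ for which the ramp stays nonnegative at $j=k$ (so $x_k^*\ge0>x_{k+1}^*$ would hold), equivalently for which $x_{a+1}^*\le C$ does not force an extra saturated slot; writing the nonnegativity of the last ramp term as a quadratic inequality in $(k-a)$ gives $(k-a)(k-a+1)\ge\frac{2\nu C}{\pi}(D-aC)$... wait, more precisely the threshold is $(k-a)\ge\frac{-1+\sqrt{1+\frac{8\nu C}{\pi}(D-aC)}}{2}$, which is exactly the ceiling defining $\theta(a)$. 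The main obstacle I anticipate is the bookkeeping to show that exactly these two constraints are the tight ones and that the affine ramp is automatically $\le C$ at its left end (consistency of the three regimes at the junctions $j=a$ and $j=\theta(a)$); this requires checking that the candidate $\lambda$ coming from the ramp satisfies the inequalities $(a-1)\pi+\nu C\le\lambda$ and $(\theta(a))\pi\ge\lambda$, and arguing that $a$ is free precisely because several values of $a$ can give the same (essentially unique) optimal $\xx^*$, so the lemma only claims the form "for some admissible $a$".
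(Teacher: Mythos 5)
Your plan follows essentially the same route as the paper's proof: truncate to a finite number of variables to get existence by compactness, use an exchange argument to force the nonincreasing (saturated/interior/zero) profile, apply KKT stationarity to obtain the affine ramp with slope $-\pi/\nu$, fix its constant via the sum constraint, and pin down $\theta(a)$ from the pair of quadratic inequalities in $k-a$ coming from nonnegativity at the last interior index and the multiplier sign at the first zero index. This matches the paper's argument in all essential steps, so the proposal is correct.
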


\begin{proof}
Consider the following program 
\begin{equation}\label{P0Dn}\tag*{$P_0^n(D)$}\begin{array}{rlr}
\Min & \ds{\sum_{j=1}^{n}(j-1)\pi x_j+\frac 1 2\nu\sum_{j=1}^{n}x_j^2} \\
\mbox{s.t.} & \ds{\sum_{j=1}^nx_j=D} \\
& 0\leq x_j\leq C & j=1,\ldots,n.
\end{array}\end{equation}
Note that \ref{P0Dn} is actually $P_0(D)$ with the additional constraint that $\sup\{j\colon x_j\neq 0\}\leq n$.

For $n<D/C$, \ref{P0Dn} has no feasible solutions, and for $n\geq D/C$, the set of feasible solutions is nonempty. In this case, by compactness and continuity of the objective function, \ref{P0Dn} has an optimal solution $\xx^*$. We necessarily have $x_j^*\geq x_{j+1}^*$ for every $j\in\left\{1,\ldots,n-1\right\}$, otherwise, exchanging the two values would strictly decrease the objective function. Let $a$ be the largest index $j$ such that $x_j^*=C$, with the convention that $a=0$ if there are no such index $j$, and let $b+1$ be the smallest index $j$ such that $x_j^*=0$, with the convention that $b=n$ if there is no such index $j$.

The constraints being all affine, the Karush-Kuhn-Tucker conditions apply. There is thus a real number $\lambda\in\R$ and two collections $\boldsymbol{\mu}, \boldsymbol{\omega}\in\R_+^n$ such that  
for every $j\in\{1,\ldots,n\}$ we have 
\begin{equation}\label{eq:kkt}
\nu x_j^*+(j-1)\pi+\lambda+\mu_j-\omega_j=0 \qquad \mbox{and} \qquad \omega_jx^*_j=\mu_j(x_j^*-C)=0.
\end{equation}
Summing this equality from $j=a+1$ to $j=b$ and noting that $\mu_j=\omega_j=0$ and $\sum_{j=a+1}^bx_j^*=D-aC$ by definition of $a$ and $b$ provide an expression of $\lambda$ in terms of $a$ and $b$. Replacing $\lambda$ by this expression in the same equality leads to
\begin{equation*}
x_j^*=\left\{
\begin{array}{ll}
C & \quad\mbox{if $j\leq a$,} \smallskip\\ 
\ds{\frac{D-aC}{b-a}+\frac\pi\nu\left(\frac{a+b+1} 2-j\right)} & \quad\mbox{if $a+1\leq j\leq b$,} \smallskip
\\ 
0 & \quad\mbox{otherwise.}
\end{array}\right.
\end{equation*}
Using this equality for $j=b$ gives the following equation.
$$(b-a)(b-a-1)<\frac{2\nu}\pi(D-aC).$$
Equation~\eqref{eq:kkt} specialized for $j=b+1$ gives $$(b-a)(b-a+1)\geq\frac{2\nu}\pi(D-aC).$$
These two inequalities together -- treated as conditions on a second order polynomial in $b-a$ -- imply the necessary condition
$$
-\frac{1}2+\frac{\sqrt{1+\frac{8\nu}\pi(D-aC)}}2\leq b-a<\frac 12+\frac{\sqrt{1+\frac{8\nu}\pi(D-aC)}}2
$$
which imposes a unique integer value for $b-a$ and $b$ takes a unique value $\theta(a)$ for each value of $a$.
We have proved that any optimal solution of \ref{P0Dn} is of this form. Now, note that by definition of $a$, we necessarily have $a\leq \lfloor D/C\rfloor$. It means that there are only finitely many optimal solutions of the \ref{P0Dn}'s when $n$ goes to infinity. Since the set of feasible solutions of the \ref{P0Dn}'s is nondecreasing when $n$ goes to infinity, it means actually that there exists an $n_0$ such that any optimal solution of \ref{P0Dn} for $n\geq n_0$ is an optimal solution of $P_0^{n_0}(D)$. Moreover, any feasible solution of $P_0(D)$ is a feasible solution of $P_0^n(D)$ for some $n\geq n_0$, and thus is dominated by the optimal solutions of $P_0^{n_0}(D)$. These latter are thus the optimal solutions of $P_0(D)$.
\end{proof}

Let $v(D)$ and $v_0(D)$ be the optimal values of respectively \ref{S1averet} and $P_0(D)$. Note that $v(D)\leq v_0(D)$.

\begin{lemma}\label{lem:D2}
If $\pi>0$, we have
$$v_0(D-\varepsilon)\leq v(D)$$ for every $\varepsilon\in(0,D]$.
\end{lemma}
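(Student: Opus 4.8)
The plan is to exploit the fact, established in Lemma~\ref{lem:y_p0}, that $P_0(D)$ has an optimal solution and that it has finite support, and to relate it to a feasible solution of $\ref{S1averet}$ for the slightly larger demand $D$. Since $v(D) \le v_0(D)$ but we want $v_0(D-\eps) \le v(D)$, the idea is that any feasible solution of $\ref{S1averet}$ with demand $D$ that happens to have finite support is a feasible solution of $P_0(D)$ (hence its value is $\ge v_0(D)$), so the only way $v(D)$ can be strictly below $v_0(D)$ is for the near-optimal solutions of $\ref{S1averet}$ to have ``infinite support''. We therefore want to approximate such a solution by a finite-support one carrying just slightly less demand.

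First I would take, for an arbitrary $\eta > 0$, a feasible solution $\xx$ of $\ref{S1averet}$ (demand $D$) with objective value at most $v(D) + \eta$. Since $\sum_{j=1}^{+\infty} x_j = D$ converges, there is an index $N$ with $\sum_{j > N} x_j \le \eps$. Define $\tilde x_j = x_j$ for $j \le N$ and $\tilde x_j = 0$ for $j > N$; then $\tilde\xx$ has finite support and $\sum_j \tilde x_j = D' $ for some $D' \in [D-\eps, D]$. The objective of $\ref{S1averet}$ is monotone under deleting tail terms (all summands $(j-1)\pi x_j$ and $\tfrac12\nu x_j^2$ are nonnegative), so the value of $\tilde\xx$ is at most that of $\xx$, i.e.\ at most $v(D)+\eta$. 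Now $\tilde\xx$ is a finite-support feasible solution for the demand $D'$, so it is feasible for $P_0(D')$, giving $v_0(D') \le v(D) + \eta$. The remaining point is to pass from $v_0(D')$ with $D' \in [D-\eps,D]$ to $v_0(D-\eps)$: I would argue that $v_0$ is nondecreasing in the demand (given a finite-support solution realizing $v_0(D')$, one can remove mass from the latest nonempty coordinate to decrease the demand to $D-\eps$, and since the removed coordinate index is the largest, this only decreases the objective — formally, $(j-1)\pi$ is largest on the support's last index, and $x_j^2$ is convex so trimming it down decreases the quadratic term as well). Hence $v_0(D-\eps) \le v_0(D') \le v(D) + \eta$.

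Letting $\eta \to 0$ yields $v_0(D-\eps) \le v(D)$, which is the claim. The main obstacle I anticipate is the last monotonicity step: one has to be careful that decreasing the demand by trimming the last coordinate keeps the solution feasible for $P_0$ (the constraints $0 \le x_j \le C$ and finiteness of support are clearly preserved, so this is really just a matter of writing it cleanly) and genuinely does not increase the objective. An alternative, perhaps cleaner, route for that step is to observe directly from the explicit form in Lemma~\ref{lem:y_p0} that $v_0(\cdot)$ is continuous and nondecreasing on $[0, \infty)$ — continuity because $\theta(a)$ and the formula for $x_j^*$ depend continuously on $D$ on each interval where $a$ and $\theta(a)$ are constant, and the pieces match at the breakpoints — so that $v_0(D-\eps) \le v_0(D')$ is immediate; I would use whichever of these is shorter to write.
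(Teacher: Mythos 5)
Your proposal is correct and follows essentially the same route as the paper: truncate the tail of a feasible solution of $P(D)$ to obtain a finite-support solution losing at most $\varepsilon$ of demand, and use that the objective only decreases under this truncation. The paper merely merges your two final steps into one by defining the truncation inductively so that the resulting solution has total demand exactly $D-\varepsilon$, which avoids the (correct, but extra) monotonicity argument for $v_0$.
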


\begin{proof}
Let $\eps\in(0,D]$. Consider a feasible solution $\xx$ of $P(D)$. Let $N_\eps\in\Z_+$ be such that $\sum_{j=N_\eps+1}^{+\infty} x_j<\eps$. Define inductively
$$x'_j=\left\{\begin{array}{ll}\min(x_j,D-\eps-\sum_{i=1}^{j-1}x_i') & \mbox{for $j\leq N_\eps$} \\ 0 & \mbox{for $j\geq N_\eps+1$.}\end{array}\right.$$ This $\xx'$ is a feasible solution of $P_0(D-\eps)$. Since $x_j'\leq x_j$ for all $j$, the value given by $\xx'$ to the objective value of $P_0(D-\eps)$ is nonlarger that the value obtained by $\xx$ for $P(D)$. The inequality follows.
\end{proof}

\begin{proof}[Proof of Proposition~\ref{prop:S1}]
Let us deal with the case $S=1$. Using the fact that $P_0(D)$ is a convex program, we easily get that $v_0(\dot)$ is a convex function. It is thus continuous on $(0,+\infty)$, and since $v_0(0)=0$, we have that $v_0(\dot)$ is continuous everywhere on $[0,+\infty)$. Making $\eps$ tend toward $0$ in Lemma~\ref{lem:D2} and the inequality $v(D)\leq v_0(D)$ imply that $v_0(D)=v(D)$. Since any feasible solution of $P_0(D)$ is a feasible solution of $P(D)$ with the same value for the objective function, every optimal solution of $P_0(D)$ is an optimal solution of $P(D)$. An algorithm computing an optimal solution of $P(D)$ can then be derived from Lemma~\ref{lem:y_p0}: we just have to try all the finitely many possible values for $a$. The proof for any value of $S$ will be obtained by showing that an optimal solution in this case consists just in replicating optimal solutions for the one-shuttle case. 

When $S>1$, consider the problem Q defined as the problem \Paveret{} without the constraint that the shuttles do not overtake (constraint (iii) in \eqref{Preturn}). The optimal value of Q provides a lower bound of the optimal value of \Paveret{}. Since there is no constraint linking the different shuttles, problem Q can be solved separately for each shuttle $s$ with a demand $D_s$ to carry, such that $\sum_s D_s=D$. The optimal solutions of Q are thus obtained from the optimal solutions of 
$$\begin{array}{rll}
\Min & \ds{\sum_{s=1}^S\left(v(D_s)+\frac \nu 2 D_s^2\right)} \\ 
\mbox{s.c.} & \ds{\sum_{s=1}^SD_s=D} & \\
& D_s\geq 0 & \forall s=1,\ldots,S.
\end{array}$$
The fact that $P(D)$ is a convex program implies that the map $v(\dot)$ is convex. As a consequence, the solution $D_s=D/S$ for all $s$ is an optimal solution of the previous program. Hence, there is an optimal solution for Q in which all shuttles have the same departure times and, for each travel, carry the same amount of users. Since the shuttles do not overtake in this optimal solution of Q, it is actually a feasible solution for the problem \Paveret{}, and thus an optimal solution for this latter problem (its value being equal to a lower bound).
\end{proof}

 \newpage

\section{When return is not allowed}\label{sec:noret}

\subsection{Minimizing the maximum waiting time}\label{subsec:pmax}

\subsubsection{The algorithm}\label{subsubsec:algo}

If $CS<D(T)$, there is no feasible solution. We can thus assume that $CS\geq D(T)$. The algorithm is a binary search starting with the values $h^+=T+\nu D(T)$ and $h^-=0$ which are respectively upper and lower bounds of the optimal value. While the gap $h^+-h^-$ is larger than $\rho$, we consider the tentative value $h=\frac{h^++h^-} 2$ and the system
\begin{equation}\label{Smaxnoreth}\tag{S$_h$}
\left\{
\begin{array}{l@{\hspace{1cm}}r}
y_j = \sup \mathcal{S}_j^h &  j=1,\ldots,S\\
y_S=D(T) &\\
y_0 = 0\\
d_j = h+\D(y_{j-1}) & j=1,\ldots,S,
 \end{array}\right.
\end{equation}
where $\mathcal{S}_j^h=\left\{y\in\R_+ \colon y\leq C+y_{j-1}, \bar{\D}(y)+\nu(y-y_{j-1})-\D(y_{j-1})\leq h,y\leq D(T)\right\}$.
Each iteration of the binary search consists in deciding whether \eqref{Smaxnoreth} has a feasible solution or not, and it can be done in $O(S)$ by computing the values of the $y_j$'s and the $d_j$'s iteratively (here we use in particular the computational assumptions on $D$). As we are going to prove, \eqref{Smaxnoreth} has a feasible solution if and only if the problem has a feasible solution with a value of the objective function at most $h$. If \eqref{Smaxnoreth} has a feasible solution, we update thus the value of $h^+$ with the current value of $h$, otherwise, we update $h^-$ with $h$. When $h^+-h^-\leq\rho$, the solution of program $(\text{S}_{h^+})$ is feasible for \Pmaxnoret{} and its value $h^+$ is at most at $\rho$ from the optimal value.

\subsubsection{Proof of Theorem~\ref{thm:pmax}}
For any fixed $h$, \Pmaxnoret{} has a feasible solution with a value of the objective function at most $h$ if and only if the following system has a feasible solution.
 \begin{equation}\label{Qmaxnoreth}\tag{Q$_h$}
 \left\{
\begin{array}{l@{\hspace{1cm}}rr}
d_j-\D(y_{j-1})\leq h & j=1,\ldots,S & \textup{(Qi)}\\
 y_j-y_{j-1}\leq C & j=1,\ldots,S & \textup{(Qii)}\\
 y_{j-1}\leq y_j & j=1,\ldots,S & \textup{(Qiii)}\\
 d_{j-1}\leq d_j  &   j=2,\ldots,S& \textup{(Qiv)}\\
 y_S=D(T) &  &\textup{(Qv)}\\
\bar\D(y_j)+\nu(y_j-y_{j-1})\leq d_j& j=1,\ldots,S & \textup{(Qvi)}\\
 y_0=0. & 
\end{array}\right.
\end{equation}
We claim that \eqref{Qmaxnoreth} has a feasible solution if and only if \eqref{Smaxnoreth} has one. Once this equivalence is established, the correctness of the binary search described above is almost immediate using Claim~\ref{claim:change_obj}.

Let $(\dd,\yy)$ be a feasible solution of \eqref{Smaxnoreth}. We use without further mention that $\mathcal{S}_j^h$ is closed. It satisfies the constraints $\textup{(Qi)}, \textup{(Qii)}$, and $\textup{(Qv)}$. We have $y_{j-1}\leq C+y_{j-1}$ and $y_{j-1}\leq D(T)$. Since $\bar{\D}(y)\leq \D(y)$ for all $y$, we also have $\bar{\D}(y_{j-1})+\nu(y_{j-1}-y_{j-1})-\D(y_{j-1})\leq h$. It means that $y_{j-1}$ belongs to $\mathcal{S}_j^h$, and thus $y_{j-1}\leq y_j$. Hence, $(\dd,\yy)$ satisfies also constraint $\textup{(Qiii)}$. Since $\bar{\D}(y_j)+\nu(y_j-y_{j-1})-\D(y_{j-1})\leq h$, the solution also satisfies constraint $\textup{(Qvi)}$ and since $\D(\dot)$ is nondecreasing, it satisfies constraint $\textup{(Qiv)}$. Therefore, it is a feasible solution of \eqref{Qmaxnoreth} and the existence of a feasible solution of \eqref{Smaxnoreth} implies the existence of a feasible solution of \eqref{Qmaxnoreth}.

For the converse implication, suppose that \eqref{Qmaxnoreth} admits a feasible solution, and consider the optimization problem consisting in maximizing $\sum_{j=1}^Sy_j$ over its feasible solutions. These feasible solutions form a compact set of $\R_+^S$ since it is obviously bounded and since the semicontinuities of $\D(\dot)$ and $\bar\D(\dot)$ imply that it is closed. There is thus an optimal solution $(\dd^*,\yy^*)$ to that optimization problem. Suppose for a contradiction that there is a $j$ such that $y_j^*< \sup \mathcal{S}_j^h$. Denote $j_0$ the largest such index. Let us slightly increase $y_{j_0}^*$, while letting the other $y_j^*$ untouched. Redefine $d_j^*$ to be $h+\D(y_{j-1}^*)$ for all $j\geq j_0$. The pair $(\dd^*,\yy^*)$ remains feasible for \eqref{Qmaxnoreth} (we use here the fact that $\sup\mathcal{S}_j^h$ is nondecreasing with $j$), while increasing the quantity $\sum_{j=1}^Sy_j^*$, which is a contradiction with the optimality assumption. Thus, we have $y_j^*=\sup \mathcal{S}_j^h$ for all $j$ and $d_j^*:=h+\D(y_{j-1}^*)$ for all $j$ provides a feasible solution for \eqref{Smaxnoreth}.
\qed

 \subsection{Minimizing the average waiting time}\label{subsec:pave}

 \subsubsection{The algorithm} \label{subsubsec:algo_ave}The following map will be useful in the description of the algorithm.
 $$f^{\ave}:(d,y,y')\longmapsto\int_y^{y'}(d-\bar\D(u))du.$$

 Define the directed graph $\G=(\V,\A)$  by 
 $$\begin{array}{rcl} 
 \V & =  & \{(0,0)\}\cup\{\eta,2\eta,\ldots,M\eta\}\times\{\eta,2\eta,\ldots,R\eta\}\smallskip\\
 \A &  =  & \left\{\big((z,r),(z',r')\big)\in\V^2\colon r+z'=r'\;\mbox{and}\;\bar\D(r')-\bar\D(r)+\nu(z'-z)+\frac 1 2 \gamma\eta\geq 0\right\},
 \end{array}$$ 
 where we use the following notations: 
 $$\alpha=\inf_{t\in[0,T)}D'_+(t),\qquad R=\left\lfloor\frac{D(T)M}C\right\rfloor,\qquad\eta=\frac C M,\qquad\mbox{and}\qquad\gamma=2\left(\frac 1\alpha+2\nu\right).$$
Set for each arc $a=\big((z,r),(z',r')\big)$ a weight $w(a)=f^{\ave}\big(\bar\D(r')+\nu(z'-\eta),r+\eta,r'\big)$. \\

If $CS<D(T)$, there is no feasible solution. We can thus assume that $CS\geq D(T)$. The algorithm consists first in computing a path $\tilde p$ minimizing $\sum_{a\in\A(p)}w(a)$, among all paths $p$ with at most $S$ arcs starting at $(0,0)\in\V$ and ending at a vertex $(z,r)$ with $r=R\eta$. Such paths exist, see Lemma~\ref{lem:kopt} below. The computation of $\tilde p$ can be done in $O(S|\A|)$ via dynamic programming. Let the vertex sequence of $\tilde p$ be $\big((z_0,r_0),(z_1,r_1),\ldots,(z_n,r_n)\big)$. The algorithm consists then in defining recursively
$$\tilde y_j=\left\{\begin{array}{ll} 
0 & \mbox{for $j=0$} \\ 
\min\big(r_j+\eta,\tilde y_{j-1}+C,D(T)\big) & \mbox{for $j=1,\ldots,n$} \smallskip\\
D(T) & \mbox{for $j=n+1,\ldots,S$}
\end{array}\right.$$
and
$$\tilde d_j=\left\{\begin{array}{ll} 
\bar\D(\tilde y_j)+\nu(\tilde y_j-\tilde y_{j-1})+j\gamma\eta & \mbox{for $j=1,\ldots,n$}  \smallskip\\
\max(\tilde d_n,T+\nu(\tilde y_{n+1}-\tilde y_n)) &\mbox{for $j=n+1,\ldots,S$}
\end{array}\right.$$
and outputting the pair $(\tilde\dd,\tilde\yy)$. The construction of the graph is sketched on Figure~\ref{fig:algo}.\\

\begin{figure}
\begin{center}
\includegraphics[width=12cm]{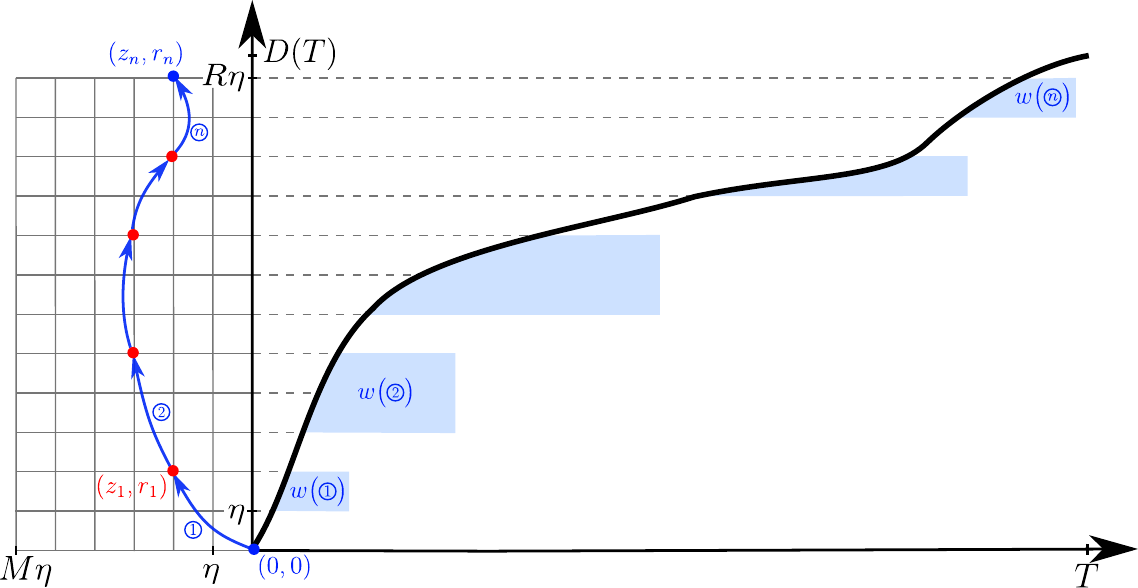}
\caption{\label{fig:algo} A feasible path in the algorithm proposed for solving \Pavenoret{}.}
\end{center}
\end{figure}

As it will be shown below, this $(\tilde\dd,\tilde\yy)$ is a feasible solution of \Pavenoret{} providing a value to the objective function within a $O\left(\frac {S^2} M\right)$ gap to the optimal value.

\subsubsection{Proof of Theorem~\ref{thm:pave}}

We provide three lemmas, which are proved in a separate section at the very end of the paper to ease the reading. Theorem~\ref{thm:pave} results immediately from their combination.

In the proofs of the lemmas, we assume that $M$ is large enough so that $\eta<D(T)$. Since in Theorem~\ref{thm:pave}, $M$ appears in `big O' formulas, it is a valid assumption. Anyway it is what is sought in practice: the larger $M$, the larger the accuracy of the solution. An $\eta$ of same order of magnitude of $D(T)$ would be useless.

\begin{lemma}\label{lem:kopt} For every optimal solution $(\dd^*,\yy^*)$, there is a path $p$ with at most $S$ arcs starting at $(0,0)\in\V$ and ending at a vertex $(z,r)$ with $r=R\eta$ and such that 
$$\frac 1 {D(T)}\sum_{a\in A(p)}w(a)\leq \gave(\dd^*,\yy^*).$$
\end{lemma}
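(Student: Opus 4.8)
The plan is to start from an optimal solution $(\dd^*,\yy^*)$ of \Pavenoret{} and construct a path $p$ in $\G$ by discretizing the cumulative loads $y_j^*$ to the grid $\{\eta,2\eta,\ldots,R\eta\}$, rounding \emph{down}. Concretely, for each $j$ with $y_j^*>0$ I would set $r_j=\eta\lfloor y_j^*/\eta\rfloor$ (and $z_j=r_j-r_{j-1}$), so that $r_j\in[y_j^*-\eta,y_j^*]$; the vertex sequence obtained by dropping repeated vertices (those $j$ with $y_j^*=y_{j-1}^*$, or more precisely $r_j=r_{j-1}$) gives at most $S$ arcs. Because $y_S^*=D(T)$ by constraint~(iv), the last vertex has $r$-coordinate $\eta\lfloor D(T)/\eta\rfloor=\eta\lfloor D(T)M/C\rfloor=R\eta$, so the path ends where required. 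The first thing to verify is that each arc $\big((z_{j-1},r_{j-1}),(z_j,r_j)\big)$ is genuinely in $\A$: the equation $r_{j-1}+z_j=r_j$ holds by definition of $z_j$, and $z_j\le C$ because $y_j^*-y_{j-1}^*\le C$ (constraint~(i)) forces $r_j-r_{j-1}\le y_j^*-y_{j-1}^*+\eta$ — here I would need to be slightly careful and may instead use $z_j\le C$ directly from $r_j\le y_j^*$ and $r_{j-1}\ge y_{j-1}^*-\eta$, so $z_j\le C+\eta$; if the grid only goes to $C$ this needs the observation that consecutive grid points differ by at most $C$, which may require rounding in a way that respects the capacity, or enlarging the analysis by one grid step. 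The monotonicity condition $\bar\D(r_j)-\bar\D(r_{j-1})+\nu(z_j-z_{j-1})+\tfrac12\gamma\eta\ge 0$ is where the slack $\tfrac12\gamma\eta$ is designed to absorb the rounding error: since $\bar\D(\cdot)$ is nondecreasing, $\bar\D(r_j)\ge\bar\D(r_{j-1})$ is automatic, so it suffices that $\nu(z_j-z_{j-1})\ge -\tfrac12\gamma\eta$, i.e. $z_{j-1}-z_j\le\gamma\eta/(2\nu)$; since $z_{j-1}-z_j=(r_{j-1}-r_{j-2})-(r_j-r_{j-1})$ differs from $(y_{j-1}^*-y_{j-2}^*)-(y_j^*-y_{j-1}^*)$ by at most $2\eta$, and the corresponding difference of true loads could be large — wait, this is not bounded in general. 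I expect the correct reading is that the arc condition of $\G$ is a relaxation that the discretized path automatically satisfies only because of a more careful choice; more likely one rounds so that $z_j$ is the \emph{change in the rounded value}, and then $\bar\D(r_j)+\nu z_j$ is monotone as a consequence of $\bar\D(y_j^*)+\nu(y_j^*-y_{j-1}^*)$ being related to $d_j^*$, which is nondecreasing by constraint~(iii) combined with~(v). This is the step I flag as the main obstacle: showing membership in $\A$ requires relating the arc inequality back to the ordering of departure times $d_j^*\le d_{j+1}^*$ and absorbing the $O(\eta)$ discretization error into the $\tfrac12\gamma\eta$ term, which is exactly why $\gamma$ is defined as $2(1/\alpha+2\nu)$ — the $1/\alpha$ controls how much $\bar\D$ can decrease relative to a load perturbation and $2\nu$ controls the loading term.

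Once the path $p$ is shown to lie in $\G$, the remaining task is the weight bound $\frac{1}{D(T)}\sum_{a\in A(p)}w(a)\le\gave(\dd^*,\yy^*)$. Here I would compare, arc by arc, $w(a)=f^{\ave}\big(\bar\D(r_j)+\nu(z_j-\eta),\,r_{j-1}+\eta,\,r_j\big)=\int_{r_{j-1}+\eta}^{r_j}\big(\bar\D(r_j)+\nu(z_j-\eta)-\bar\D(u)\big)du$ against the corresponding piece $\int_{y_{j-1}^*}^{y_j^*}\big(d_j^*-\bar\D(u)\big)du$ of $D(T)\gave(\dd^*,\yy^*)$. The interval $[r_{j-1}+\eta,r_j]$ is a sub-interval of $[y_{j-1}^*,y_j^*]$ (since $r_{j-1}+\eta\ge y_{j-1}^*$ and $r_j\le y_j^*$), so the integration range only shrinks, and on the integrand side I must show $\bar\D(r_j)+\nu(z_j-\eta)\le d_j^*$. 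From constraint~(v), $d_j^*\ge\bar\D(y_j^*)+\nu(y_j^*-y_{j-1}^*)\ge\bar\D(r_j)+\nu(z_j-\eta)$ using $r_j\le y_j^*$ (so $\bar\D(r_j)\le\bar\D(y_j^*)$) and $z_j-\eta=r_j-r_{j-1}-\eta\le y_j^*-y_{j-1}^*$ (since $r_j\le y_j^*$ and $r_{j-1}\ge y_{j-1}^*-\eta$); this is clean. Therefore each arc weight is bounded by the matching piece of the objective, and summing over all arcs (noting that the pieces with $y_j^*=y_{j-1}^*$ contribute nothing and are simply skipped in $p$) gives $\sum_{a\in A(p)}w(a)\le D(T)\gave(\dd^*,\yy^*)$, which is the claim after dividing by $D(T)$. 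The only delicate point in this second half is bookkeeping the possibly-skipped indices and the tail $j=n+1,\ldots,S$ (where $y_j^*=D(T)$ already), but since those contribute zero-length integrals the accounting is immediate.

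In summary: (1) define $r_j$ by rounding $y_j^*$ down to the $\eta$-grid and $z_j$ as the increments, dropping stationary steps to get $\le S$ arcs ending at $r=R\eta$; (2) verify each arc lies in $\A$, using nondecreasingness of $\bar\D$ for the trivial part and absorbing the $O(\eta)$ rounding error plus the monotonicity of $d_j^*$ into the $\tfrac12\gamma\eta$ slack for the nontrivial part — this is the crux and is precisely why $\gamma=2(1/\alpha+2\nu)$; (3) bound each arc weight by the corresponding piece $\int_{y_{j-1}^*}^{y_j^*}(d_j^*-\bar\D(u))\,du$ of $D(T)\gave(\dd^*,\yy^*)$ using constraint~(v) and the inclusion of integration intervals; (4) sum and divide by $D(T)$.
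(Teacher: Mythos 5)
Your construction is the same as the paper's (round the $y_j^*$ down to the $\eta$-grid, drop repetitions, compare arc weights term by term), and your step (3) -- the weight bound via $\bar\D(r_j)+\nu(z_j-\eta)\leq d_j^*$ and the inclusion $[r_{j-1}+\eta,r_j]\subseteq[y_{j-1}^*,y_j^*]$ -- is essentially correct. But the step you yourself flag as ``the main obstacle'', namely membership of each arc in $\A$, is genuinely incomplete, and the mechanism you sketch for it does not work as stated. Constraints (iii) and (v) give $d_j^*\leq d_{j+1}^*$ and $d_j^*\geq\bar\D(y_j^*)+\nu(y_j^*-y_{j-1}^*)$, but an inequality in one direction plus monotonicity of $d^*$ does \emph{not} imply that $\bar\D(y_j^*)+\nu(y_j^*-y_{j-1}^*)$ is nondecreasing in $j$: a shuttle could load many users and still depart early relative to its loading-completion time if $d_j^*$ has slack over the right-hand side of (v). The missing ingredient is Claim~\ref{claim:remove_dj}: one first replaces the optimal solution by one in which $d_j^*=\bar\D(y_j^*)+\nu(y_j^*-y_{j-1}^*)$ holds with \emph{equality} for every $j$ (this is where the hypothesis that $D(\dot)$ is increasing enters), and only then does $d_{\sigma(j)}^*\geq d_{\sigma(j-1)}^*$ translate into the arc inequality. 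One also needs the Lipschitz-type bound $\bar\D(y+\delta)\leq\bar\D(y)+\delta/\alpha$ (Claim~\ref{lem:infbarD}) to control $\bar\D(y_{\sigma(j)}^*)-\bar\D(r_j)\leq\eta/\alpha$; together with the $2\nu\eta$ loss from rounding the two load increments, this is exactly what the slack $\tfrac12\gamma\eta=(1/\alpha+2\nu)\eta$ absorbs. You name the right intuition for $\gamma$, but without the equality normalization the chain of inequalities cannot be closed.

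Two smaller points. First, your worry about $z_j\leq C$ is resolved cleanly: writing $r_{j-1}=\lfloor y_{\sigma(j)-1}^*/\eta\rfloor\eta$ (valid because all indices between $\sigma(j-1)$ and $\sigma(j)-1$ round to the same grid point), one gets $z_j<(y_{\sigma(j)}^*-y_{\sigma(j)-1}^*)+\eta\leq C+\eta=(M+1)\eta$, and since $z_j$ is a multiple of $\eta$ this forces $z_j\leq M\eta$; no change of rounding is needed. Second, the indices dropped from the path are those with $\lfloor y_j^*/\eta\rfloor=\lfloor y_{j-1}^*/\eta\rfloor$, not those with $y_j^*=y_{j-1}^*$; their contributions to $\gave(\dd^*,\yy^*)$ are positive in general, but they are nonnegative by constraint (v), so discarding them still yields the desired one-sided bound.
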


\begin{lemma}\label{lem:feas}
The pair $(\tilde\dd,\tilde\yy)$ is a feasible solution of \Pavenoret{}.
\end{lemma}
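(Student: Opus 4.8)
The plan is to show that the constructed pair $(\tilde\dd,\tilde\yy)$ satisfies all six constraints of \eqref{Pnoreturn}. First I would note that feasibility requires $CS\geq D(T)$, which we have assumed, and that the path $\tilde p$ has $n\leq S$ arcs, so $\tilde y_j$ and $\tilde d_j$ are well-defined for all $j\in\{0,1,\ldots,S\}$. Constraints (i), (ii), and (iv) are almost immediate from the recursive definition of $\tilde y_j$: the $\min$ with $\tilde y_{j-1}+C$ forces (i); the $\min$ expression together with the monotonicity of $r_j$ along the path (each arc has $r'=r+z'\geq r$) and the fact that $\tilde y_{j-1}\leq D(T)$ at each step forces (ii); and setting $\tilde y_j=D(T)$ for $j>n$ together with checking that $\tilde y_n$ already equals (or is made to equal) $D(T)$—this uses that the path ends at a vertex with $r_n=R\eta=\lfloor D(T)M/C\rfloor\eta$, so $r_n+\eta\geq D(T)$, whence the $\min$ picks out $D(T)$—gives (iv). Constraint (v), $\bar\D(\tilde y_j)+\nu(\tilde y_j-\tilde y_{j-1})\leq\tilde d_j$, holds directly for $j\leq n$ since $\tilde d_j$ is defined as that quantity plus the nonnegative term $j\gamma\eta$, and for $j>n$ since $\tilde d_j\geq T+\nu(\tilde y_{n+1}-\tilde y_n)\geq\bar\D(\tilde y_j)+\nu\cdot 0$ as $\bar\D(\cdot)\leq T$ and $\tilde y_j=\tilde y_{j-1}$ there.

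The real work is constraint (iii), the monotonicity $\tilde d_{j-1}\leq\tilde d_j$. For the range $j=n+1,\ldots,S$ this is clear since all those $\tilde d_j$ are equal (to $\max(\tilde d_n,T+\nu(\tilde y_{n+1}-\tilde y_n))$, noting $\tilde y_{n+1}-\tilde y_n$ is the same for all these $j$ because $\tilde y_j=D(T)$ throughout), and $\tilde d_{n+1}\geq\tilde d_n$ by the $\max$. The delicate case is $j\leq n$: we must show
\[
\bar\D(\tilde y_{j-1})+\nu(\tilde y_{j-1}-\tilde y_{j-2})+(j-1)\gamma\eta\;\leq\;\bar\D(\tilde y_j)+\nu(\tilde y_j-\tilde y_{j-1})+j\gamma\eta,
\]
i.e. that $\gamma\eta$ compensates any decrease in $\bar\D(\tilde y_j)-\bar\D(\tilde y_{j-1})+\nu(\tilde y_j-\tilde y_{j-1})$ relative to the previous step. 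The plan is to relate $\tilde y_j$ to $r_j+\eta$: when the $\min$ is not active, $\tilde y_j=r_j+\eta$ and $\tilde y_j-\tilde y_{j-1}=r_j-r_{j-1}=z_j$ (the edge relation $r_{j-1}+z_j=r_j$), and the arc condition $\bar\D(r_j)-\bar\D(r_{j-1})+\nu(z_j-z_{j-1})+\tfrac12\gamma\eta\geq 0$ is exactly what is needed after substituting $\bar\D(r_j)=\bar\D(\tilde y_j-\eta)$, then using a Lipschitz-type bound $|\bar\D(y+\eta)-\bar\D(y)|\leq\eta/\alpha$ (which follows from $D'_+\geq\alpha>0$, hence $\bar\D$ is $1/\alpha$-Lipschitz) to pass from $\bar\D(r_j)$ to $\bar\D(\tilde y_j)$ at a cost absorbed by the remaining $\tfrac12\gamma\eta$; here the definition $\gamma=2(1/\alpha+2\nu)$ is calibrated precisely so the errors $\tfrac{2\eta}{\alpha}$ (two $\bar\D$-shifts) plus $2\nu\eta\cdot 2$ (from $z$ vs. $\tilde y$ differences bounded by $\eta$) fit inside $\gamma\eta$. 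When the $\min$ with $\tilde y_{j-1}+C$ or $D(T)$ is active at some step, I would argue separately that $\tilde y_j-\tilde y_{j-1}$ can only be smaller than $z_j$, which—combined with $\bar\D$ monotone and the gap term—still preserves the inequality; one checks that once $\tilde y_j$ hits $D(T)$ it stays there, so the truncation happens "at the end" and does not create oscillations.

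The main obstacle I anticipate is precisely this bookkeeping for constraint (iii) when the truncating $\min$ is active: one must verify that $\tilde y_j - \tilde y_{j-1}\le z_j$ and $\tilde y_{j-1}-\tilde y_{j-2}\ge$ (something controlled) simultaneously, so that the arc inequality for the pair $(r_{j-1},r_j)$—which a priori only controls the untruncated increments—still yields the needed bound on the truncated increments. The cleanest route is probably a short lemma stating that $r_j-\eta\le\tilde y_j-? $... more precisely that $\tilde y_j\le r_j+\eta$ always and $\tilde y_j\ge r_j+\eta$ unless a capacity/total-demand cap has been hit, and that the set of indices where a cap is hit is an up-set in $\{1,\ldots,n\}$. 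Granting that structural fact, the two cases (cap hit / not hit) each reduce to the arc condition plus the $1/\alpha$-Lipschitz estimate for $\bar\D$, and the constant $\gamma$ was chosen so the arithmetic closes. I would also remark, for the $j\le n$ to $j=n+1$ transition, that $\tilde d_n=\bar\D(D(T))+\nu(\tilde y_n-\tilde y_{n-1})+n\gamma\eta\le T+\nu C+ n\gamma\eta$, so it is comparable with $T+\nu(\tilde y_{n+1}-\tilde y_n)$, and the $\max$ handles whichever is larger; no further estimate is needed there since (iii) only demands an inequality, not equality.
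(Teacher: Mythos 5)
Your overall strategy is the paper's: verify the constraints one by one, single out (iii) as the delicate one, and close it by combining the arc condition $\bar\D(r_j)-\bar\D(r_{j-1})+\nu(z_j-z_{j-1})+\tfrac12\gamma\eta\geq 0$ with the $1/\alpha$-Lipschitz bound on $\bar\D(\cdot)$, the constant $\gamma=2(1/\alpha+2\nu)$ being calibrated to absorb the $O(\eta)$ errors. But two steps have genuine gaps.

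First, constraint (iv) in the case $n=S$. From $r_n+\eta\geq D(T)$ you conclude that ``the $\min$ picks out $D(T)$,'' but $\tilde y_S=\min\big(r_S+\eta,\tilde y_{S-1}+C,D(T)\big)$ and nothing you say rules out $\tilde y_{S-1}+C<D(T)$. The unconditional lower bound $\tilde y_{S-1}\geq r_{S-1}$ only yields $\tilde y_{S-1}+C\geq r_{S-1}+z_S=r_S$, which can fall short of $D(T)$ by up to $\eta$. The paper needs a genuine case analysis here: either $\tilde y_{S-1}=r_{S-1}+\eta$ (then $\tilde y_{S-1}+C\geq r_S+\eta>D(T)$), or $\tilde y_{S-1}=D(T)$, or else the capacity term is active at every index $j\leq S-1$, in which case $\tilde y_{S-1}=(S-1)C$ and $\tilde y_{S-1}+C=CS\geq D(T)$; reducing to these three cases uses the fact that the property $\tilde y_j=r_j+\eta$, once attained, propagates forward along the path.

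Second, the structural fact you propose to ``grant'' for constraint (iii) is false as stated: the set of indices where a cap is hit is not an up-set. If $z_1=M\eta$ then $\tilde y_1=\min(r_1+\eta,C,D(T))=C=r_1<r_1+\eta$, so the capacity cap is active at $j=1$; if then $z_2\leq (M-1)\eta$ one gets $\tilde y_2=r_2+\eta$, so no cap is active at $j=2$. Only the $D(T)$-truncation is an up-set. What is true, and what the paper proves (its Claim~\ref{lem:low_y}) and uses, is the unconditional two-sided bound $r_j\leq\tilde y_j\leq r_j+\eta$, the lower bound following by induction from $z_j\leq M\eta=C$. With it every increment $\tilde y_j-\tilde y_{j-1}$ lies within $2\eta$ of $z_j$ in a one-sided way in each direction, and one gets uniformly, with no case split, $\tilde d_j-\tilde d_{j-1}\geq\bar\D(r_j)-\bar\D(r_{j-1}+\eta)+\nu(z_j-z_{j-1}-2\eta)+\gamma\eta\geq 0$. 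The same lower bound $\tilde y_n\geq r_n$ is also needed for constraint (i) at $j=n+1$, to get $\tilde y_{n+1}-\tilde y_n\leq D(T)-r_n<\eta\leq C$, a case your check of (i) passes over.
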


\begin{lemma}\label{lem:byy} The following inequality holds:
$$\gave(\tilde\dd,\tilde\yy)\leq\frac 1 {D(T)}\sum_{a\in A(\tilde p)}w(a)+O\left(\frac {S^2} M\right).$$
\end{lemma}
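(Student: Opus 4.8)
\textbf{Proof strategy for Lemma~\ref{lem:byy}.}

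The goal is to compare $\gave(\tilde\dd,\tilde\yy)=\frac 1{D(T)}\sum_j\int_{\tilde y_{j-1}}^{\tilde y_j}(\tilde d_j-\bar\D(u))du$ with the path cost $\frac 1{D(T)}\sum_{a\in A(\tilde p)}w(a)$, where $w(a)=f^{\ave}(\bar\D(r')+\nu(z'-\eta),r+\eta,r')$ for $a=((z,r),(z',r'))$. The plan is to decompose the difference summand by summand. For an arc $a_j=((z_{j-1},r_{j-1}),(z_j,r_j))$ of $\tilde p$, the corresponding term of $w$ is an integral of $(\bar\D(r_j)+\nu(z_j-\eta))-\bar\D(u)$ over $u\in[r_{j-1}+\eta,r_j]$, while the corresponding term of $\gave$ is an integral of $(\tilde d_j-\bar\D(u))$ over $u\in[\tilde y_{j-1},\tilde y_j]$. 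First I would record the two elementary facts that drive everything: (a) from the recursive definition, $\tilde y_j$ and $r_j+\eta$ differ by at most a bounded multiple of $\eta$ (they coincide as long as no truncation by $\tilde y_{j-1}+C$ or $D(T)$ occurs, and each truncation costs at most $\eta$); and (b) $\tilde d_j=\bar\D(\tilde y_j)+\nu(\tilde y_j-\tilde y_{j-1})+j\gamma\eta$, so $\tilde d_j$ exceeds the ``reference departure time'' $\bar\D(r_j)+\nu(z_j-\eta)$ used in $w(a_j)$ by at most $O(j\eta)$, using the hypothesis that $D'_+$ is bounded below by $\alpha>0$ to turn differences of $\bar\D$ into differences of the $y$'s (so $\bar\D$ is Lipschitz with constant $1/\alpha$ on the relevant range, by the standard inverse-function bound for strictly increasing functions with a positive lower derivative).

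Next I would bound $\int_{\tilde y_{j-1}}^{\tilde y_j}(\tilde d_j-\bar\D(u))du - w(a_j)$ by splitting it into (i) the effect of replacing the reference departure time by $\tilde d_j$ inside the integral, which contributes at most $(\tilde d_j-(\bar\D(r_j)+\nu(z_j-\eta)))\cdot(\tilde y_j-\tilde y_{j-1})\le O(j\eta)\cdot C$ since $\tilde y_j-\tilde y_{j-1}\le C$; and (ii) the effect of changing the integration limits from $[r_{j-1}+\eta,r_j]$ to $[\tilde y_{j-1},\tilde y_j]$, which, since the integrand is bounded in absolute value by $T+\nu C$ on the whole domain, contributes at most $(T+\nu C)$ times the total discrepancy of the endpoints, i.e.\ $O(\eta)$ per arc by fact (a). Summing over the $n\le S$ arcs, term (i) gives $\sum_{j=1}^n O(jC\eta)=O(S^2\eta)=O(S^2 C/M)=O(S^2/M)$ (recall $C$ is a constant of the computational model and $\eta=C/M$), and term (ii) gives $n\cdot O(\eta)=O(S/M)$, which is absorbed. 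Finally I would treat the ``tail'' departures $j=n+1,\ldots,S$: there $\tilde y_{j-1}=\tilde y_j=D(T)$ so those integrals are empty and contribute nothing, except possibly $j=n+1$ where $\tilde y_n<\tilde y_{n+1}=D(T)$; but this last interval has length at most $\eta$ (from the truncation bound, since $r_n=R\eta$ and $R\eta\le D(T)<(R+1)\eta$) and the integrand is $O(T+\nu C)$, contributing $O(\eta)$. Dividing by $D(T)$, which is a fixed positive constant, preserves the $O(S^2/M)$ bound, giving the claim.

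\textbf{Main obstacle.} The delicate point is controlling the accumulated drift between $\tilde y_j$ and $r_j+\eta$: a priori each truncation by the capacity constraint could shift things, and one must check that these shifts do not compound across the $S$ steps beyond an $O(\eta)$ total — this is exactly where the arc-admissibility condition $\bar\D(r')-\bar\D(r)+\nu(z'-z)+\frac 12\gamma\eta\ge 0$ and the definition of $\gamma=2(1/\alpha+2\nu)$ are designed to help, and I expect Lemma~\ref{lem:feas} (feasibility) to already have isolated the needed monotonicity facts, so here the work is mostly bookkeeping that reuses them. The second mildly technical point is the Lipschitz bound on $\bar\D$: it requires the hypothesis $\inf_{t\in[0,T)}D'_+(t)=\alpha>0$ in an essential way, and one should state once and for all that this forces $|\bar\D(y)-\bar\D(y')|\le\frac1\alpha|y-y'|$ on $[0,D(T)]$, after which the per-arc estimates are routine.
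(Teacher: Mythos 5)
Your proposal is correct and follows essentially the same route as the paper's proof: a per-arc comparison in which the departure time used in the objective exceeds the reference time in $w(a_j)$ by $O(j\gamma\eta)$ and the integration endpoints are shifted by $O(\eta)$ (these are exactly Claims~\ref{lem:low_y} and~\ref{lem:infbarD}), summed over at most $S$ arcs to give $O(S^2\eta)=O(S^2/M)$, with the tail departures $j\geq n+1$ contributing only $O(\eta)$. The only differences are cosmetic — the paper first uses monotonicity of $f^{\ave}$ to pass to grid-aligned arguments and then splits each arc into three integrals $I_j^1,I_j^2,I_j^3$, bounding the two boundary slabs by a telescoping sum rather than your cruder $(T+\nu C)\eta$ per arc (both suffice) — and one small misattribution: the non-compounding of the truncations, i.e. $r_j\leq\tilde y_j\leq r_j+\eta$, comes from the bound $z_j\leq M\eta=C$ built into the vertex set together with $r_{j-1}+z_j=r_j$, not from the arc-admissibility condition on $\bar\D$.
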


\subsection{When the demand function is a step function}

 Better complexity results can be obtained when the demand is a step function. A {\em step function} is a function that can be written as a finite linear combination of indicator functions of intervals. The assumption on $D(\dot)$ being a step function means that the users arrive only on a finite number of instants. As it has already been noted, the assumption $\nu=0$ is equivalent to the assumption that every user boards a shuttle as soon as he arrives in the terminal.
 
 \begin{proposition}\label{prop:pmax}
Assume that $D(\dot)$ is a step function defined with $K$ discontinuities, supposed to be part of the input. Suppose moreover that $\nu=0$. Then for each of \Pmaxnoret{} and of  \Pavenoret{}, there is an algorithm computing an optimal solution in $O(K^2S)$.
\end{proposition}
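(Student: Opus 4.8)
The plan is to exploit the discrete structure that a step function $D(\dot)$ with $\nu=0$ imposes on the problem, reducing each of \Pmaxnoret{} and \Pavenoret{} to an optimization over a small finite set of candidate values for the cumulative loads $y_j$. Write the discontinuities of $D(\dot)$ as $0\leq t_1<t_2<\cdots<t_K\leq T$, and let the corresponding left and right limits of $D(\dot)$ determine the finitely many ``plateau levels'' of the demand. Because $\nu=0$, constraint~(v) of \eqref{Pnoreturn} becomes simply $\bar\D(y_j)\leq d_j$, and by Claim~\ref{claim:change_obj} (for the maximum) and by direct inspection of $\gave$ (for the average) there is an optimal solution with $d_j=\bar\D(y_j)$ for every $j$; hence the objective in both cases is a function of $\yy$ alone. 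The first step is therefore to argue, via an exchange/rounding argument, that there is an optimal solution in which every $y_j$ lies in a set $Y$ of size $O(K)$: namely $Y$ consists of the plateau levels of $D(\dot)$ together with, for each plateau level $y_0$ and each index $j$, the value $\min(y_0+C, D(T))$; more robustly, one shows each $y_j$ may be taken in $\{D(t_i^-),D(t_i^+) : i\}\cup\{y_{j-1}+C\}$, so that along the sequence the ``free'' breakpoints are the $O(K)$ demand levels and the only other moves are full-capacity jumps. The key observation making this work is monotonicity: increasing a single $y_j$ towards the next admissible level never increases $\bar\D(y_k)$ for $k\geq j$ and thus never hurts either objective, so one can push each $y_j$ upward until it hits a demand breakpoint or the capacity bound.

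The second step is to set up the resulting discrete optimization as a shortest-path / layered dynamic program. Build a directed graph whose vertices are pairs $(j,y)$ with $j\in\{0,1,\ldots,S\}$ and $y\in Y$, with $y_0=0$ and $y_S=D(T)$ forced, and an arc from $(j-1,y)$ to $(j,y')$ whenever $y\leq y'\leq y+C$ and $y'\in Y$; since $|Y|=O(K)$ and the out-degree of each vertex is $O(K)$, the graph has $O(KS)$ vertices and $O(K^2S)$ arcs. For \Pavenoret{}, put on the arc $\big((j-1,y),(j,y')\big)$ the weight $\int_{y}^{y'}(\bar\D(y')-\bar\D(u))\,du$ (using $d_j=\bar\D(y_j)$), which is computable in constant time under our computational model; a minimum-weight path from $(0,0)$ to $(S,D(T))$ then yields an optimal solution of \Pavenoret{} after dividing by $D(T)$, and it is found in $O(K^2S)$ time because the graph is a DAG layered by $j$. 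For \Pmaxnoret{}, the objective is a bottleneck rather than a sum: one either runs the same DAG dynamic program with "$\max$'' in place of "$+$'' and arc value $\bar\D(y')-\bar\D(y)$ (the waiting time of shuttle $j$ when it carries users from level $y$ to level $y'$, using $\D=\bar\D$ need not hold, so one uses $\D(y)$ at the tail), or — cleaner — one does a binary search over the $O(KS)$ distinct achievable waiting-time values, but the straightforward bottleneck DP over the DAG already runs in $O(K^2S)$, which is the claimed bound.

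The main obstacle I anticipate is making the reduction to the finite candidate set $Y$ fully rigorous for \Pmaxnoret{}, where the objective $\max_{j\colon y_j>y_{j-1}}\big(d_j-\D(y_{j-1})\big)$ involves $\D$ (the \emph{first arrival after} the first $y_{j-1}$ users), not $\bar\D$; when $D(\dot)$ is a step function these two pseudo-inverses differ precisely at the plateau levels, so one must be careful that nudging $y_{j-1}$ across or up to a breakpoint interacts correctly with the term $\D(y_{j-1})$ appearing in shuttle $j$'s waiting time — in particular, decreasing $y_{j-1}$ to the bottom of its current plateau can only decrease $\D(y_{j-1})$ and hence only help, so the exchange argument still goes through, but this needs to be spelled out. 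The analogous step for \Pavenoret{} is easier because $\gave$ is continuous in $\yy$ and piecewise quadratic with breakpoints exactly at the plateau levels, so convexity-type reasoning on each piece pins the optimum to the vertices of the feasible polytope, which are exactly the points of $Y$. The remaining verifications — feasibility of the recovered $(\dd,\yy)$, the complexity count, and optimality of the path — are routine.
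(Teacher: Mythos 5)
Your proposal follows essentially the same route as the paper: round the cumulative loads up to a finite candidate set and run a layered dynamic program (bottleneck for the max, additive for the average) over a graph whose arcs encode the capacity constraint, with arc weights $\bar\D(y')-\D(y)$, giving $O(K^2S)$ overall. Two small corrections: your first description of the candidate set (plateau levels plus a \emph{single} capacity jump $\min(y_0+C,D(T))$ per level) is insufficient --- e.g.\ if all users arrive at $t=0$ the optimum needs $y_j=jC$ for several $j$ --- so one must take the full set $\{D(t_k)+qC\colon q=0,\ldots,\lfloor D(T)/C\rfloor\}$ as in your ``more robust'' recursive formulation (this is still $O(K)$ only because $C$ and $D(T)$ are constants of the computational model); and in your discussion of the $\D$-versus-$\bar\D$ subtlety, \emph{decreasing} $y_{j-1}$ decreases $\D(y_{j-1})$ and therefore \emph{increases} the waiting time $d_j-\D(y_{j-1})$ of shuttle $j$, so the exchange must go upward throughout, which is indeed what your main argument does.
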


It turns out that when $C$ and the values taken by $D(\dot)$ are integer, the loads of the shuttles in the optimal solution returned by the algorithm are also integer. We cover thus the case where the users are atoms.

\subsubsection*{The algorithm} We provide only the algorithm for \Pmaxnoret, the other case can be dealt similarly. Let $t_1<\cdots<t_K$ be the $K$ discontinuities. Define the directed graph $\G=(\V,\A)$ by 
$$\begin{array}{rcl}
\V & = & \{0\}\cup\left\{D(t_k)+Cq\colon k\in\left\{1,\ldots,K\right\}, q\in\{0,1,\ldots,Q\}\right\} \smallskip\\
\A & = & \{(y,y')\in\V^2\colon 0\leq y'-y\leq C\},
\end{array}$$
where $Q=\lfloor D(T)/C\rfloor$. Note that the vertex set is a finite subset of $\R_+$. Set for each arc $a=(y,y')$ a weight $w(a)=\bar\D(y')-\D(y)$. We consider the two vertices $0$ and $D(T)$ (obtained with $k=K$ and $q=0$). \\

If $CS<D(T)$, there is no feasible solution. We can thus assume that $CS\geq D(T)$. The algorithm consists first in computing a $0$-$D(T)$ path $\tilde p$ with $S$ arcs minimizing $\max_{a\in \A(\tilde p)}w(a)$. Within the proof of Proposition~\ref{prop:pmax} below, we show that from any feasible solution we can build a $0$-$D(T)$ path with $S$ arcs in $\G$. Thus, when the problem is feasible, such paths exist in $\G$.  The computation of $\tilde p$ can be done in $O(S|\A|)$ via dynamic programming. Let the vertex sequence of $\tilde p$ be $(\tilde y_0,\tilde y_1,\ldots,\tilde y_S)$. The end of the algorithm consists in defining $\tilde d_j=\bar\D(\tilde y_j)$ for all $j\in\{1,\ldots,S\}$ and outputting the pair $(\tilde\dd,\tilde\yy)$.

As it will be shown below, this $(\tilde\dd,\tilde\yy)$ is an optimal solution of \Pmaxnoret{}.
 
\subsubsection*{Proof of Proposition~\ref{prop:pmax}} According to Claim~\ref{claim:change_obj}, we replace the objective function of \Pmaxnoret{} by $\max_{j\in\left\{1,\ldots,S\right\}}(d_j-\D(y_{j-1}))$. It can easily be checked that $(\tilde\dd,\tilde\yy)$ is feasible for \Pmaxnoret. It provides a value $\max_{j\in\left\{1,\ldots,S\right\}}(\bar\D(\tilde y_j)-\D(\tilde y_{j-1}))$ for \Pmaxnoret{} (with the alternative objective function), and this value coincides with $\max_{a\in \A(\tilde p)}w(a)$. The path $\tilde p$ describes therefore a solution of \Pmaxnoret{} with a value equal to $\max_{a\in \A(\tilde p)}w(a)$.

Conversely, let $(\dd,\yy)$ be any feasible solution of \Pmaxnoret. Let $\bar\yy$ be the sequence defined by $\bar y_j=\min\{y\in\V\colon y\geq y_j\}$.  On the one hand, we have $\bar y_{j-1}\leq \bar y_j$ because $y_{j-1}\leq y_j$. On the other hand, we have $\bar y_{j-1}+C\geq y_{j-1}+C\geq y_j$.
If $\bar y_{j-1}+C\in\V$, we have $\bar y_{j-1}+C\geq\bar y_j$ by definition of $\bar y_j$. If $\bar y_{j-1}+C\notin\V$, then $\bar y_{j-1}+C> D(T)\geq\bar y_j$ since $D(T)\in\V$. Thus, $(\bar y_{j-1},\bar y_j)\in\A$ for all $j\in\{1,\ldots,S\}$. We have $\bar y_0=0$ and $\bar y_S=D(T)$ and the sequence $\bar \yy$ is a $0$-$D(T)$ path $p$ with $S$ arcs.

Second, we prove that $\bar\D(\bar y_j)-\D(\bar y_{j-1})\leq d_j-\D( y_{j-1})$ as follows. There exists a unique $k$ such that $D(t_k)<y_j\leq D(t_{k+1})$. By definition of $D(\dot)$, we have $D(t)=D(t_k)$ for all $t\in[t_k,t_{k+1})$, and thus $\bar\D(y_j)=t_{k+1}$. Since $D(t_{k+1})\in\V$, we have $\bar y_j\leq D(t_{k+1})$ by definition of $\bar y_j$, and hence $\bar\D(\bar y_j)\leq t_{k+1}$ (directly by definition of $\bar\D(\dot)$). Therefore, $\bar\D(\bar y_j)-\D(\bar y_{j-1})\leq\bar\D(y_j)-\D(y_{j-1})\leq d_j-\D(y_{j-1})$ (where we use the fact that $\D(\dot)$ is nondecreasing). 

Finally, we have $\max_{a\in\A(p)}w(a)\leq \max_{j\in\{1,\ldots,S\}}\big(d_j-\D( y_{j-1})\big)$. As the path $\tilde p$ is optimal, $\max_{a\in\A(\tilde p)}w(a)$ is a lower bound on the value taken by the (alternative) objective function on $(\dd,\yy)$. \qed

\section{When return is allowed}\label{sec:ret}

\subsubsection*{The algorithm} The following map will be useful:
$$f^{\max}\colon(\ell,y,y')\longmapsto\left\{\begin{array}{ll} \max(\ell,\bar\D(y'))+\nu(y'-y)-\D(y) & \mbox{if $y'\geq y$} \\ 0 & \mbox{if $y'=y$.}\end{array}\right.$$

We introduce the following two sets
$$\begin{array}{rcl}
\setQ & = & \{0,\eta,\ldots,(\left\lfloor T^+/\eta\right\rfloor+1)\eta\}^S \smallskip\\
 \setR & = & \left\{\rr\in\{0,\eta,\ldots,R\eta\}^S\colon 0\leq r_k-r_{k-1}\leq M\eta\mbox{ for $k=2,\ldots,S$}\right\},
 \end{array}$$ where 
$$\eta=\frac{C}{M},\qquad R=\left\lfloor\frac{D(T)M}C\right\rfloor,\qquad\mbox{and}\qquad T^+=T+\frac{\nu D(T)}S+\left(\left\lceil\frac{D(T)}{CS}\right\rceil-1\right)\pi.$$
Define the directed graph $\G=(\V,\A)$ by 
$$\begin{array}{rcl} 
 \V & = &\left\{(z,\qq,\rr)\in\{0,\eta,\ldots,M\eta\}\times\setQ\times\setR\colon r_k\leq D(q_k)\;\mbox{for $k=1,\ldots,S$}\right\}\medskip\\
 \A & = & \left\{\big((z,\qq,\rr),(z',\qq',\rr')\big)\in \V^2\;\mbox{satisfying $(\star)$}\right\},
 \end{array}
 $$
 where $$(\star)\quad r_S+z'=r_1'\quad\mbox{and}\quad q_k'-q_k-\nu(r_k-r_{k-1}) -\pi+(1+\nu)\eta\geq 0\mbox{ for $k=1,\ldots, S$}.$$
We adopt the convention $D(t)=D(T)$ when $t\geq T$ and we define $r_0=r_1-z$. Set for each arc $a=\big((z,\qq,\rr),(z',\qq',\rr')\big)$ a weight  
$w(a)=\max_{k\in\left\{1,\ldots,S\right\}}f^{\max}(q_k'-\eta,r_{k-1}'+\eta,r'_k)$
 where $r'_0=r'_1-z'$.\\

The algorithm consists first in computing a path $\tilde p$ minimizing $\max_{a\in A(p)}w(a)$ among all paths $p$ starting at $(0,\zero,\zero)\in\V$ (the `all zero' vector) and ending at a vertex $(z,\qq,\rr)$ with $r_S=R\eta$. Such paths exist, see Lemma~\ref{lem:loptmax} below. It can be done in $O(|\V||\A|)$ via dynamic programming. Let the vertex sequence of $\tilde p$ be $\big((0,\zero,\zero),(z_0,\boldsymbol{q}^{0},\boldsymbol{r}^{0}),\ldots,(z_n,\boldsymbol{q}^n,\boldsymbol{r}^{n})\big)$. The vector $\boldsymbol{r}^i$ models the cumulative loads of the $S$ shuttles when they perform their $i$th departure. The vector $\boldsymbol{q}^{i}$ models the times at which the loading of the $S$ shuttles starts when they perform their $i$th departure. These quantities are computed only approximatively (with an accuracy $\eta$).

The algorithm consists then in defining recursively
for all $j=iS+k$ with $i=0,\ldots,n$ and $k=1,\ldots,S$
$$\begin{array}{rcl}
\tilde y_{j}&=&\left\{\begin{array}{ll}\min\big(r_{k}^i+\eta,y_{j-1}+C,D(T)\big)&\mbox{if } r^i_k>r^i_{k-1}\\\tilde y_{j-1}&\mbox{otherwise}\end{array}\right.\\
\tilde d_{j}&=&\max(q_{k}^i,\bar\D(\tilde y_{j}))+j\tilde\gamma\eta+\nu(\tilde y_{j}-\tilde y_{j-1})
\end{array}$$
where $\tilde y_0=0$, $r^i_0=r^{i-1}_S$, $r^0_0=0$, and $\tilde\gamma=(1+2\nu+1/\alpha)$. For $j=(n+1)S+1,\ldots,N$
$$\begin{array}{rcl}
\tilde y_{j}&=&D(T)\\
\tilde d_{j}&=&\max\big(\tilde d_{j-S}+\pi,T\big)+\nu(\tilde y_j-\tilde y_{j-1}).
\end{array}$$
and outputting the pair $(\tilde\dd,\tilde\yy)$.\\

As it will be stated below, this $(\tilde\dd,\tilde\yy)$ is a feasible solution of \Pmaxret{} providing a value to the objective function $\gmax(\dot)$ within a $O\left(\frac {S^2} M\right)$ gap to the optimal value.

\subsubsection*{Proof of Theorem~\ref{thm:pmaxret}}

We provide four lemmas. The proof of Lemma~\ref{lem:t+} is almost the one of Proposition~\ref{prop:finite_max} and the proofs of the three others follow the same scheme as the ones of Lemmas~~\ref{lem:kopt},~\ref{lem:feas}, and~\ref{lem:byy}. They are thus omitted. Theorem~\ref{thm:pmaxret} results immediately from their combination.

\begin{lemma}\label{lem:loptmax}
For every optimal solution $(\dd^*,\yy^*)$, there is a path $p$ starting at $(0,\zero,\zero)\in\V$ and ending at a vertex $(z,\qq,\rr)$ with $r_S=R\eta$ and such that 
$$\max_{a\in A(p)} w(a)\leq \gmax(\dd^*,\yy^*).$$
\end{lemma}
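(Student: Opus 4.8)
The plan is to take an arbitrary optimal solution $(\dd^*,\yy^*)$ of \Pmaxret{} and ``discretize'' it onto the vertex set $\V$, producing a path $p$ whose arc weights are all bounded by $\gmax(\dd^*,\yy^*)$. By Proposition~\ref{prop:finite_max} (in the guise of Lemma~\ref{lem:t+}), we may assume the optimal solution has finitely many nonempty departures, and in fact all its departure times are at most $T^+$ up to a harmless modification (push every departure as early as constraints~(v) and~(vi) allow; since the ``wait until $T$, load full, leave'' schedule has value $T^+$, an optimal solution's departures can be taken $\leq T^+$). Group the departures $S$ at a time: write the $j$th departure as $j=iS+k$ with $k\in\{1,\ldots,S\}$, so index $i$ is the ``round'' and $k$ identifies the shuttle. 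Let $n$ be the last round containing a nonempty departure.

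Next I would define, for each round $i$ and each shuttle $k$, the discretized loads $r_k^i=\eta\lceil y^*_{iS+k}/\eta\rceil$ rounded \emph{up} to a multiple of $\eta$ (clamped to $R\eta$ for the last meaningful value, and to keep $0\le r_k^i-r_{k-1}^i\le M\eta$, which holds because $y^*_{iS+k}-y^*_{iS+k-1}\le C=M\eta$ and the rounding changes each by less than $\eta$), and the discretized loading-start times $q_k^i=\eta\lceil (d^*_{iS+k}-\nu(y^*_{iS+k}-y^*_{iS+k-1}))/\eta\rceil$, again rounded up. The auxiliary coordinate $z$ at the vertex for round $i$ records $r_1^{i+1}-r_S^i$ (equivalently the load of shuttle $1$ in round $i+1$), so that the relation $r_S+z'=r_1'$ in $(\star)$ is satisfied by construction. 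One then checks that consecutive discretized vertices form an arc: the constraint $q_k'-q_k-\nu(r_k-r_{k-1})-\pi+(1+\nu)\eta\ge 0$ is exactly the return constraint~(vi) for shuttle $k$ with each term perturbed by at most $\eta$ in the favourable direction, hence the slack $(1+\nu)\eta$; and $r_k\le D(q_k)$ holds because $y^*_{iS+k}\le D(d^*_{iS+k}-\nu(\cdots))\le D(\bar\D(y^*_{iS+k}))$ combined with the rounding (using upper semicontinuity of $D(\dot)$ and that $q_k^i\ge d^*_{iS+k}-\nu(\cdots)$, the actual last-arrival time of that load). The path starts at $(0,\zero,\zero)$ because $y^*_0=0$ and the zeroth round's quantities round to $0$, and ends at a vertex with $r_S^n=R\eta$ because $y^*_S=D(T)$ rounds up to $R\eta$ (after the solution is trimmed so the last round is exactly full).

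Finally I would bound each arc weight. For an arc out of the vertex of round $i$, $w(a)=\max_k f^{\max}(q_k^{i+1}-\eta,\, r_{k-1}^{i+1}+\eta,\, r_k^{i+1})$, and the point is that $q_k^{i+1}-\eta\le d^*_{(i+1)S+k}-\nu(\cdots)$ (because $q$ was rounded \emph{up}), $r_{k-1}^{i+1}+\eta\ge y^*_{(i+1)S+k-1}$, and $r_k^{i+1}\ge y^*_{(i+1)S+k}$, so that the argument $\max(q_k^{i+1}-\eta,\bar\D(r_k^{i+1}))+\nu(r_k^{i+1}-r_{k-1}^{i+1}-\eta)-\D(r_{k-1}^{i+1}+\eta)$ is term-by-term dominated by $\max(d^*-\nu(\cdots),\bar\D(y^*))+\nu(y^*-y^*)-\D(y^*)$, which by constraint~(v) and the definition of $\gmax$ is at most $d^*_{(i+1)S+k}-\D(y^*_{(i+1)S+k-1})\le\gmax(\dd^*,\yy^*)$ (the $\bar\D$ term being dominated by $d^*$ via~(v), and $\D(\cdot)$ being nondecreasing). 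Taking the max over $k$ and then over arcs of $p$ gives $\max_{a\in A(p)}w(a)\le\gmax(\dd^*,\yy^*)$, as desired.

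\textbf{Main obstacle.} The delicate point is consistency of the rounding direction: to make the arc constraints hold one wants to round $q$ and $r$ \emph{up}, but to make the weight bound hold one also needs those same up-rounded values to ``dominate'' the real solution in the right way --- and in $f^{\max}$ the quantity $\D(r_{k-1}+\eta)$ appears with a minus sign, so rounding $r_{k-1}$ up \emph{hurts} the weight. The shift $+\eta$ on the middle argument of $f^{\max}$ and the slack $+\frac12\gamma\eta$ / $(1+\nu)\eta$ built into the graph definition are precisely there to absorb these $O(\eta)$ discrepancies; making sure every one of the $S$ per-round constraints and the weight inequality survives the rounding simultaneously, uniformly in $i$, is the bookkeeping that must be done carefully --- exactly as in Lemma~\ref{lem:kopt}, whose proof this one mirrors.
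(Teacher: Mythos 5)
Your overall plan --- discretize an optimal solution onto $\V$, group departures into rounds of $S$, and let the $\pm\eta$ shifts built into $(\star)$ and into $w$ absorb the rounding errors --- is exactly the scheme the paper intends (the proof is omitted there with a pointer to Lemma~\ref{lem:kopt}). However, your choice of rounding direction for the loads is wrong, and it breaks precisely the inequality the lemma asserts. The map $f^{\max}(\ell,y,y')=\max(\ell,\bar\D(y'))+\nu(y'-y)-\D(y)$ is \emph{nondecreasing} in its third argument, so to dominate $w(a)=\max_k f^{\max}(q_k'-\eta,r_{k-1}'+\eta,r_k')$ by $f^{\max}(\ell^*_k,y^*_{k-1},y^*_k)=d^*_k-\D(y^*_{k-1})\leq\gmax(\dd^*,\yy^*)$ you need $r_k'\leq y^*_k$, i.e.\ the loads must be rounded \emph{down} ($r=\lfloor y^*/\eta\rfloor\eta$, exactly as in Lemma~\ref{lem:kopt}), after which the $+\eta$ on the second slot and the $-\eta$ on the first slot of $w$ do their job. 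With your choice $r_k'=\lceil y^*_k/\eta\rceil\eta\geq y^*_k$, the term $\bar\D(r_k')$ can strictly exceed $\bar\D(y^*_k)$ --- by up to $\eta/\alpha$, cf.\ Claim~\ref{lem:infbarD} --- and nothing in the first slot of the $\max$ compensates for this, so your construction only yields $\max_{a}w(a)\leq\gmax(\dd^*,\yy^*)+O(\eta/\alpha)$ rather than the exact inequality claimed. The whole point of this lemma is that the discretization error is pushed entirely into Lemma~\ref{lem:bllyymax}; an $O(\eta)$ loss here is a different (weaker) statement. Your ``term-by-term domination'' sentence asserts the monotonicity in the wrong direction for the third argument of $f^{\max}$.

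Rounding the loads up causes two further problems. First, the vertex constraint $r_k\leq D(q_k)$: you only establish $y^*_k\leq D(q_k)$, and $r_k$ may exceed $y^*_k$ by almost $\eta$, which $D(q_k)$ need not cover even when $\inf_{t}D'_+(t)>0$. Second, the terminal condition: $\lceil D(T)/\eta\rceil\eta$ can equal $(R+1)\eta$, which lies outside $\setR$ and does not satisfy $r_S=R\eta$; your parenthetical ``clamped to $R\eta$'' is doing real, unjustified work there. Both issues disappear with floor rounding, since $\lfloor D(T)/\eta\rfloor\eta=R\eta$ and $r_k\leq y^*_k\leq D(\bar\D(y^*_k))\leq D(q_k)$ once $q_k$ is rounded up to at least the true loading-start time. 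So the fix is local but essential: keep the times rounded up and switch the loads to rounded down; then each of your inequalities, including the arc condition in $(\star)$ with its $(1+\nu)\eta$ slack, lands on the correct side and the argument goes through as in Lemma~\ref{lem:kopt}.
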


\begin{lemma}\label{lem:t+}
There is an optimal solution of \Pmaxret{} for which $T^+$ is an upper bound on the loading time of the last departure.
\end{lemma}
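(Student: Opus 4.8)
The plan is to mimic the proof of Proposition~\ref{prop:finite_max} but to track the \emph{loading time of the last departure} rather than the number of departures. Recall that the ``loading time'' of departure $j$ is $\tilde d_j - \nu(\tilde y_j - \tilde y_{j-1})$, i.e. the instant $q_j := d_j - \nu(y_j-y_{j-1})$ at which the loading of that shuttle starts. First I would exhibit the same explicit reference schedule as in Proposition~\ref{prop:finite_max}: make all $S$ shuttles wait until time $T$, then run them in rounds, loading each at full capacity $C$ (except possibly the very last shuttle, whose load is $D(T)-C\lfloor D(T)/C\rfloor$), each shuttle leaving as soon as its loading is finished. In this schedule the last departure occurs in round $\lceil D(T)/(CS)\rceil$, so its loading \emph{starts} at time at most $T + (\lceil D(T)/(CS)\rceil - 1)\pi + \nu(D(T)/S - C)$ (accounting for the $\nu$-loading delays accumulated by that shuttle across the earlier rounds), which is comfortably bounded by $T + \nu D(T)/S + (\lceil D(T)/(CS)\rceil - 1)\pi = T^+$. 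Hence $OPT \leq \gmax$ of this schedule $\leq T^+$.

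Next I would argue that an optimal solution can be taken to have the loading of its last departure starting no later than $T^+$. Combine Proposition~\ref{prop:finite_max} (existence of an optimal solution with finitely many nonempty departures; any solution with infinitely many nonempty departures has infinite value, hence is strictly dominated) with the following observation: in an optimal solution, if the last nonempty departure has load $y_S - y_{S-1} > 0$ and its loading starts at time $q$, then the first user on that shuttle arrived at time $\D(y_{S-1})$ at the earliest (here I use that $\D(\dot)$ is the pseudo-inverse and the interpretation of $\D(y_{S-1})$ as the first arrival time of a user assigned to that shuttle), so the waiting time contributed by that shuttle is at least $q - \D(y_{S-1}) \geq q - T$ (since $\D(\dot)\le T$). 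Therefore $q - T \leq \gmax(\dd^*,\yy^*) = OPT \leq T^+ - T$, whence $q \leq T^+$, as desired. If the optimal solution happens to end with empty departures, those can be removed (or their loading times pushed down to coincide with the last nonempty one) without affecting feasibility or value, so we may assume the last departure is nonempty.

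The only mild subtlety — and the step I would be most careful about — is making sure the bookkeeping of the $\nu$-loading terms in the reference schedule is done correctly, since the $(\ds\lceil D(T)/(CS)\rceil-1)$ rounds each add a $\pi$ and the loadings interleave with the trips; but since $\nu(y_S-y_{S-1}) \leq \nu C \leq \nu D(T)$ (trivially, and in fact much smaller), replacing the precise per-round accounting by the crude bound $\nu D(T)/S$ in the statement of $T^+$ already absorbs it, so no tight estimate is needed. Everything else is a direct transcription of the compactness/domination argument of Proposition~\ref{prop:finite_max} together with the one-line inequality $q - \D(y_{S-1}) \geq q - T$.
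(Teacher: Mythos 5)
Your overall strategy is the one the paper itself points to (it declares the proof ``almost the one of Proposition~\ref{prop:finite_max}''): bound $OPT$ from above by the value of the wait-until-$T$ reference schedule, then bound the loading start time $q$ of the last nonempty departure from below by a waiting time that is itself at most $OPT$. However, your final chain of inequalities has a genuine gap: you invoke $OPT\leq T^+ - T$, whereas the only bound you established in the first part (and the only one the reference schedule can give) is $OPT\leq T^+$. The inequality $OPT\leq T^+-T$ is false in general. For instance, take $S=1$, $C\geq D(T)$, $D(\cdot)$ increasing (so the first user arrives at $\D(0)=0$), and $\pi\geq T+\nu D(T)$. Then $\lceil D(T)/(CS)\rceil=1$, so $T^+-T=\nu D(T)$; yet any solution with at least two nonempty departures makes the first user of the second one wait at least $\pi\geq T+\nu D(T)$ by constraint (vi), so the single-departure schedule is optimal and $OPT=T+\nu D(T)-\D(0)=T+\nu D(T)>T^+-T$. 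Feeding the correct bound $OPT\leq T^+$ into your (correct) inequality $q-T\leq OPT$ yields only $q\leq T+T^+$, which falls short of the stated bound by an additive $T$.

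To be clear about the consequences: the weaker conclusion $q\leq T+T^+$ would suffice for everything the lemma is used for, since $T$ is a constant of the computational model and one could simply enlarge $\setQ$ to $\{0,\eta,\ldots,(\lfloor (T+T^+)/\eta\rfloor+1)\eta\}^S$ without changing the complexity or error analysis of Theorem~\ref{thm:pmaxret}; but as written your argument does not prove the bound $T^+$ itself, and closing that last gap of $T$ cannot be done with the one-line inequality $q-\D(y_{S-1})\geq q-T$ alone (one would need to exploit more structure of an optimal solution, e.g., taking departures as early as possible and tracing the chain of binding constraints). A secondary, repairable slip: in your full-capacity reference schedule the loading of round $m=\lceil D(T)/(CS)\rceil$ starts at $T+(m-1)(\pi+\nu C)$, and $(m-1)\nu C$ is in general \emph{larger} than your expression $\nu(D(T)/S-C)$ when $D(T)/(CS)$ is not an integer; the final estimate $\leq T^+$ still holds because $(m-1)C\leq D(T)/S$, but the intermediate bound should be corrected.
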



\begin{lemma}\label{lem:feasret}
The pair $(\tilde \dd, \tilde \yy)$ is a feasible solution of \Pmaxret.
\end{lemma}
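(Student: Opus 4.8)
\textbf{Proof plan for Lemma~\ref{lem:feasret}.}

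The plan is to verify each constraint of \eqref{Preturn} directly from the recursive definitions of $\tilde\yy$ and $\tilde\dd$, exactly mirroring the structure of the proof of Lemma~\ref{lem:feas}. First I would set $N=(n+1)S+1$ or a larger multiple of $S$ so that the output has a number of departures divisible by $S$, and note that constraint (iv), $\lim_j \tilde y_j=D(T)$, holds because $\tilde y_j=D(T)$ for all $j\geq(n+1)S+1$ by construction. Constraint (i) is immediate: for $j\leq (n+1)S$ the clamp $\min(r^i_k+\eta,\tilde y_{j-1}+C,D(T))$ forces $\tilde y_j-\tilde y_{j-1}\leq C$, and for larger $j$ the load is zero. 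Constraint (ii), monotonicity of $\tilde\yy$, needs a short argument: when $r^i_k>r^i_{k-1}$ we must check $\min(r^i_k+\eta,\tilde y_{j-1}+C,D(T))\geq \tilde y_{j-1}$; this follows because $r^i_k+\eta\geq r^i_{k-1}+\eta$ and an inductive claim that $\tilde y_{j-1}\leq r^i_{k-1}+\eta$ whenever $r^i_{k-1}$ is the ``current'' coordinate, using that consecutive $\rr$-coordinates along an arc increase by at most $M\eta=C$ (from the definition of $\setR$ and the arc relation $r_S+z'=r_1'$ with $z'\leq M\eta$).

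Next I would handle constraint (v), $\bar\D(\tilde y_j)+\nu(\tilde y_j-\tilde y_{j-1})\leq \tilde d_j$: since $\tilde d_j=\max(q_k^i,\bar\D(\tilde y_j))+j\tilde\gamma\eta+\nu(\tilde y_j-\tilde y_{j-1})$ and $j\tilde\gamma\eta\geq 0$, this is automatic for $j\leq(n+1)S$; for $j>(n+1)S$ we use $\tilde d_j\geq T+\nu(\tilde y_j-\tilde y_{j-1})\geq \bar\D(\tilde y_j)+\nu(\tilde y_j-\tilde y_{j-1})$ since $\bar\D(\cdot)\leq T$. Constraint (iii), $\tilde d_{j-1}\leq\tilde d_j$ for $j\leq S$ (the overtake constraint within the first round), should follow from the monotonicity of $\bar\D(\cdot)$, the inequality $q_k^i\geq q_{k-1}^i$ that I would need to extract from the graph (either it is built into $\setQ$ or it follows from the arc condition $(\star)$ together with $r_k\le D(q_k)$), and the fact that the correction term $j\tilde\gamma\eta$ is increasing in $j$ — the $\tilde\gamma\eta$ slack is precisely there to absorb the discretization error between the true times and the grid values $q_k^i$. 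The crux is constraint (vi): $\tilde d_j+\pi+\nu(\tilde y_{j+S}-\tilde y_{j+S-1})\leq \tilde d_{j+S}$. Writing $j=iS+k$ and $j+S=(i+1)S+k$, this unwinds to comparing $\max(q_k^i,\bar\D(\tilde y_j))+\nu(\tilde y_j-\tilde y_{j-1})+\pi$ against $\max(q_k^{i+1},\bar\D(\tilde y_{j+S}))+S\tilde\gamma\eta+\nu(\tilde y_{j+S}-\tilde y_{j+S-1})$, and the arc condition $(\star)$ gives exactly $q_k^{i+1}\geq q_k^i+\nu(r_k^i-r_{k-1}^i)+\pi-(1+\nu)\eta$; one then checks that $\nu(r_k^i-r_{k-1}^i)$ dominates $\nu(\tilde y_j-\tilde y_{j-1})$ up to $O(\eta)$ error (since $\tilde y_j\approx r_k^i+\eta$ and $\tilde y_{j-1}\approx r_{k-1}^i+\eta$), that $\bar\D(\tilde y_{j+S})\geq \bar\D(\tilde y_j)$, and that all the accumulated $O(\eta)$ discretization errors are swallowed by the term $S\tilde\gamma\eta$ with the chosen $\tilde\gamma=1+2\nu+1/\alpha$. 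For the tail indices $j>(n+1)S$, constraint (vi) is built directly into the definition $\tilde d_j=\max(\tilde d_{j-S}+\pi,T)+\nu(\tilde y_j-\tilde y_{j-1})$, so that case is trivial, and one also needs to patch the boundary between the ``graph part'' and the ``tail part'' (indices around $(n+1)S$), which is the same kind of bookkeeping.

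I expect the main obstacle to be constraint (vi) together with the seam at $j=(n+1)S$: one must carefully track that the $\eta$-accurate grid quantities $q_k^i, r_k^i$ differ from the true loading times and cumulative loads $\tilde d_j-\nu(\tilde y_j-\tilde y_{j-1}), \tilde y_j$ by a controlled amount, and that these errors accumulate at most linearly in the index, which is exactly why the slack $j\tilde\gamma\eta$ was inserted with a coefficient $\tilde\gamma$ involving $1/\alpha$ (the role of $\alpha=\inf D'_+$ being to bound $\bar\D(r+\eta)-\bar\D(r)\leq \eta/\alpha$, so that a grid perturbation of the load translates into a bounded perturbation of the arrival time). Since the statement only claims feasibility — not the approximation bound, which is the content of the analogue of Lemma~\ref{lem:byy} — no optimality or counting argument is needed here; the whole proof is a constraint-by-constraint verification, and I would present it in that order: (iv), (i), (ii), (v), (iii), then (vi) with the seam, following verbatim the template of the proof of Lemma~\ref{lem:feas}.
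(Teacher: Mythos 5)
The paper omits the proof of this lemma, stating only that it follows the same scheme as the proof of Lemma~\ref{lem:feas}, and your constraint-by-constraint plan is exactly that scheme: the analogue of Claim~\ref{lem:low_y} ($r^i_k\leq\tilde y_{iS+k}\leq r^i_k+\eta$), the use of $\alpha$ via Claim~\ref{lem:infbarD}, the vertex condition $r_k\leq D(q_k)$ giving $\bar\D(\tilde y_j)\leq q^i_k+\eta/\alpha$, and the arc condition $(\star)$ combined with the slack $j\tilde\gamma\eta$ to discharge constraint (vi) are precisely the ingredients needed. The only slip is that at the seam $j=(n+1)S+1$ the load is $D(T)-\tilde y_{(n+1)S}$ rather than zero (it is bounded by $\eta\leq C$ since $r^n_S=R\eta$), but you explicitly flag this boundary bookkeeping, so the plan is sound.
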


\begin{lemma}\label{lem:bllyymax}
The following inequality holds:
\begin{eqnarray*}
\gmax(\tilde\dd,\tilde\yy)&\leq&\max_{a\in A(\tilde p)} w(a)+O\left(\frac {S^2}M\right)\\
\end{eqnarray*}

\end{lemma}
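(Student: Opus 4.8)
The plan is to compare the value $\gmax(\tilde\dd,\tilde\yy)$ produced by the algorithm with the weight $\max_{a\in A(\tilde p)}w(a)$ of the computed path, shuttle by shuttle and departure by departure, exactly as in the proof of Lemma~\ref{lem:byy}. Recall that for a departure $j=iS+k$ coming from the arc $a_i=\big((z_i,\boldsymbol q^i,\boldsymbol r^i),(z_{i+1},\boldsymbol q^{i+1},\boldsymbol r^{i+1})\big)$ of $\tilde p$, the contribution of that arc to the path weight includes the term $f^{\max}(q_k^{i+1}-\eta,\,r_{k-1}^{i}+\eta,\,r_k^{i+1})$ (with the obvious indexing via $(\star)$), whereas the true waiting time of the last user of shuttle $j$ equals $\tilde d_j-\D(\tilde y_{j-1})=\max(q_k^i,\bar\D(\tilde y_j))+j\tilde\gamma\eta+\nu(\tilde y_j-\tilde y_{j-1})-\D(\tilde y_{j-1})$. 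First I would record the elementary discretization estimates: $\tilde y_j$ and $r_k^i$ differ by at most $\eta$ (by the definition of $\tilde y_j$ as $\min(r_k^i+\eta,\ldots)$, using $r_k^i\le D(q_k^i)\le D(T)$ so the cap is not binding in the generic case), and hence, since $D(\dot)$ has right derivative bounded below by $\alpha>0$, both $\bar\D(\tilde y_j)-\bar\D(r_k^i+\eta)$ and $\D(\tilde y_{j-1})-\D(r_{k-1}^i+\eta)$ are $O(\eta/\alpha)$; similarly the $\nu$-loading terms differ by $O(\nu\eta)$. The constant $\tilde\gamma=1+2\nu+1/\alpha$ is chosen precisely so that the additive slack $j\tilde\gamma\eta$ absorbs all of these per-arc discrepancies plus the $\frac12$-type rounding already built into the arc-admissibility condition, so that $\tilde d_j-\D(\tilde y_{j-1})\le w(a_i)+(\text{cumulative slack})$.

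Next I would handle the two index ranges separately. For $j=iS+k$ with $i\le n$, the slack accumulated is at most $j\tilde\gamma\eta\le NS\tilde\gamma\eta$, and since the number of departures $N$ is $O(S\cdot T^+/\eta)=O(S^2/\eta)$ by Lemma~\ref{lem:t+} (the loading time of the last departure is $\le T^+$, and between consecutive departures of a fixed shuttle at least $\pi>0$ elapses, giving $O(T^+/\pi)$ rounds), and $\eta=C/M$, we get $N\le O(S^2 M)$... wait — one must be slightly careful here: the right bound is that the number of \emph{rounds} $n$ is $O(T^+/\pi)=O(1)$ in the computational model where $\pi,T$ are constants, so $N=nS=O(S)$, and then $j\tilde\gamma\eta\le O(S\cdot \eta)=O(S/M)$ per departure while the maximum over all $O(S)$ departures of an $O(S/M)$ quantity contributes $O(S^2/M)$ to the bound only through the coarser accounting; I would present it as: each of the $N=O(S)$ departures incurs additive error $O(j\eta)=O(N\eta)=O(S\eta)=O(S/M)$, and taking the max over departures keeps it $O(S/M)$, but the genuinely $O(S^2/M)$ term arises because the discretization errors in $\bar\D$ compound along the chain $r_0^i,\ldots,r_S^i$ within a round via the relation $r_k^{i}=r_{k-1}^{i-1}\text{ or }r_{k-1}^i+z$, forcing a telescoping sum of $S$ terms each $O(\eta/\alpha)$. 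For the tail $j=(n+1)S+1,\ldots,N$, the shuttles are full or nearly so, $\tilde y_j=D(T)$, and $\tilde d_j=\max(\tilde d_{j-S}+\pi,T)+\nu(\tilde y_j-\tilde y_{j-1})$; here the waiting times are dominated by the genuine structural term $(k/S-1)\pi$ already present in the true optimum, and the discretization overhead is again $O(S^2/M)$ by the same telescoping argument. Taking the maximum over both ranges and over all arcs of $\tilde p$ yields the claimed inequality.

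The main obstacle I expect is the bookkeeping of the per-round accumulation of $\bar\D$-discretization errors together with the interplay between the three coordinates $(z,\qq,\rr)$ of a vertex: one must verify that the slack $j\tilde\gamma\eta$ in the definition of $\tilde d_j$ — which grows linearly in $j$ — is simultaneously (a) large enough to restore feasibility of constraint (vi) of \eqref{Preturn} against all rounding in $(\star)$ (this is really the job of Lemma~\ref{lem:feasret}, which I am allowed to invoke) and (b) small enough, namely $O(S^2/M)$ over the whole horizon, not to spoil the approximation. Pinning down that $N=O(S)$ from Lemma~\ref{lem:t+} and the constancy of $\pi$ and $T$ in the computational model is the step that makes $j\le N=O(S)$, hence $j\tilde\gamma\eta=O(S/M)$ per departure and $O(S^2/M)$ after the telescoping over the $S$ shuttles within the relevant round. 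Since this lemma's proof is explicitly stated to follow the scheme of Lemma~\ref{lem:byy}, I would in the final write-up mostly point to that proof and isolate only the two new ingredients: the $S$-fold telescoping across shuttles in a round, and the use of Lemma~\ref{lem:t+} to bound $N$.
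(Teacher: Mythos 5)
Your approach is essentially the paper's: the authors omit this proof, stating only that it follows the scheme of Lemma~\ref{lem:byy}, and that is exactly what you do --- a per-departure comparison of $\tilde d_j-\D(\tilde y_{j-1})$ with the corresponding arc weight via the discretization bounds of Claim~\ref{lem:infbarD} and the analogue of Claim~\ref{lem:low_y}, the additive slack $j\tilde\gamma\eta$, and Lemma~\ref{lem:t+} together with the constancy of $\pi$ and $T$ to bound the number of departures by $N=O(S)$. One small correction: since each departure incurs error $O(N\eta)=O(S/M)$ and $\gmax$ takes a maximum rather than a sum, no telescoping of $\bar\D$-errors across the $S$ shuttles of a round is needed (the analogue of Claim~\ref{lem:low_y} gives $r^i_k\leq\tilde y_j\leq r^i_k+\eta$ directly, without compounding), so the stated $O(S^2/M)$ is simply a loose upper bound for $O(S/M)$ chosen to match the form of Theorems~\ref{thm:pave} and~\ref{thm:pmaxret}; likewise the tail departures are empty and contribute nothing to $\gmax$.
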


\section{Experimental results}\label{sec:experiments}

In this section, we test the performance of the algorithms described in previous sections for problems \Pmaxnoret{}, \Pavenoret{}, and \Pmaxret. As explained in Section~\ref{sec:mainresults}, we do not have such an algorithm for problem \Paveret. 

\subsection{Data}

Our experiments are based on real data provided by our partner Eurotunnel. They are related to the transportation of freight trucks between France and Great Britain. Some parameters are fixed as constants of the problems and do not vary from an instance to another. For the constants $C, T, \nu ,\pi$ of our problem, we take the real values used in practice by the company:


 

 $$
C=32,\quad T=1440\min \mbox{ (one day)}, \quad \nu=0.625\min,\quad \pi=34\min.
 $$ 
 (The value taken for $\pi$ is actually the duration of a trip going from France to Great-Britain, and not of the round trip, which lasts approximatively twice this quantity.)
 
Two functions $D(\dot)$ are used. The first one (``1P'') is a piecewise affine map obtained by averaging the real demand over several days. It turns out that this function has a peak period in the morning. The second function (``2P''), also piecewise affine, is obtained from the first by artificially adding a second peak period in the evening. In both cases, $D(\dot)$ is increasing and $D(T)=2016$. For problems \Pmaxnoret{} and \Pavenoret{}, we consider $S\in[100,250]$ since the number of shuttle trips in every direction is within this range for a typical day.
 
 The numerical experiments are performed on a Macbook Pro of 2014 with four 2.2 Ghz processors and 16 Gb of ram.
 
\subsection{Results}

The problems \Pmaxnoret{}, \Pavenoret{}, and \Pmaxret{} are solved with algorithms described in this article. The results are summarized in the following tables. 

Table~\ref{tab:Pmaxnoret} gives the numerical results for problem \Pmaxnoret. The next column is the number of shuttles $S$ in the fleet. The third column provides the parameter $\eps$ of the algorithm, which is an \textit{a priori} upper bound on the optimality gap (Corollary~\ref{cor:approx}). The two next columns give respectively the lower bound and the upper bound (value of the feasible solution returned by the algorithm), both expressed in minutes. The next column is the optimality gap. The last column provides the CPU time spent solving the problem.

\begin{table}[h]
\begin{tabular}{rr|r|rrr|r}
\multicolumn{1}{c}{$D$} & \multicolumn{1}{c|}{$S$} & \multicolumn{1}{c|}{$\eps$} & \multicolumn{1}{c}{LB} & \multicolumn{1}{c}{UB} & \multicolumn{1}{c|}{gap} & \multicolumn{1}{c}{CPU}\\&&&&&\multicolumn{1}{c|}{(\%)}&\multicolumn{1}{c}{(\second)}\\
\hline
1P & 100 &  $10^{-4}$  &27.2 & 27.2 & 0.0 & 0 \\
1P & 150 &  $10^{-4}$  &18.1 & 18.1 & 0.0 & 0 \\
1P & 200 &  $10^{-4}$  &13.6 & 13.6 & 0.0 & 0 \\
1P & 250 &  $10^{-4}$  &11.0 & 11.0 & 0.0 & 0 \\
\hline
2P & 100 &  $10^{-4}$  &27.0 & 27.0 & 0.0 & 0 \\
2P & 150 &  $10^{-4}$  &18.0 & 18.0 & 0.0 & 0 \\
2P & 200 &  $10^{-4}$  &13.5 & 13.5 & 0.0 & 0 \\
2P & 250 &  $10^{-4}$  &10.8 & 10.8 & 0.0 & 0 \\
\end{tabular}
\bigskip
\caption{Numerical results for problem \Pmaxnoret\label{tab:Pmaxnoret}}
\end{table}

Table~\ref{tab:Pavenoret} gives the numerical results for problem \Pavenoret. The columns are the same as for Table~\ref{tab:Pmaxnoret} except the third one which provides here the parameter $M$ of the algorithm. We know from Theorem~\ref{thm:pave} that the gap between the upper bound and the lower bound converges asymptotically to 0 when $M$ goes to infinity. We tried $M=32$ and $M=128$.

\begin{table}[h]
\begin{tabular}{rr|r|rrr|r}
\multicolumn{1}{c}{$D$} & \multicolumn{1}{c|}{$S$} & \multicolumn{1}{c|}{$M$} & \multicolumn{1}{c}{LB} & \multicolumn{1}{c}{UB} & \multicolumn{1}{c|}{gap} & \multicolumn{1}{c}{CPU}\\&&&&&\multicolumn{1}{c|}{(\%)}&\multicolumn{1}{c}{(\second)}\\
\hline
1P & 100 &  32  &17.3 & 19.2 & 10.0 & 34 \\ 
1P & 100 &  128  &18.7 & 19.2 & 2.5 & 1930 \\
\hline
1P & 200 &  32  &7.7 & 9.6 & 19.4 & 70 \\
1P & 200 &  128  &9.1 & 9.6 & 5.0 & 4035 \\
\hline
2P & 100 &  32  &17.5 & 19.4 & 9.9 & 38 \\
2P & 100 &  128  &18.9 & 19.4 & 2.5 & 2387	 \\
\hline
2P & 200 &  32  &7.9 & 9.7 & 19.2 & 76 \\
2P & 200 &  128  &9.2 & 9.7 & 5.0 & 4463 \\
\end{tabular}
\bigskip
\caption{Numerical results for problem \Pavenoret\label{tab:Pavenoret}}
\end{table}

Table~\ref{tab:Pmaxret} gives the numerical results for problem \Pmaxret. The columns are the same as for Table~\ref{tab:Pavenoret}. Since the computation time was prohibitively long as soon as $S\geq 2$, we made experiments for $S=1$. To get realistic waiting times for the users, we divided the demand functions by 3.5 leading to (``1P$^*$'') and (``2P$^*$''). Again, we know from Theorem~\ref{thm:pmaxret} that for large $M$, we will be close to the optimal solution and we tried $M=16$ and $M=32$.

\begin{table}[h]
\begin{tabular}{rr|r|rrr|r}
\multicolumn{1}{c}{$D$} & \multicolumn{1}{c|}{$S$} & \multicolumn{1}{c|}{$M$} & \multicolumn{1}{c}{LB} & \multicolumn{1}{c}{UB} & \multicolumn{1}{c|}{gap} & \multicolumn{1}{c}{CPU}\\&&&&&\multicolumn{1}{c|}{(\%)}&\multicolumn{1}{c}{(\second)}\\
\hline
1P$^*$ & 1 &  16  &168.6 & 214.2 & 21.3 & 104 \\
1P$^*$ & 1 &  32  &184.5 & 210.8 & 12.5 & 1654 \\
\hline
2P$^*$ & 1 &  16  &101.0 & 131.0 & 22.9 & 106\\
2P$^*$ & 1 &  32  &106.9 & 126.3 & 15.4 & 1848 \\
\end{tabular}
\bigskip
\caption{Numerical results for problem \Pmaxret\label{tab:Pmaxret}}
\end{table}

\subsection{Comments}

In Table~\ref{tab:Pmaxnoret}, the results for problem \Pmaxnoret{} are extremely convincing, the optimal solutions were found almost immediately.
In Table~\ref{tab:Pavenoret}, the algorithm for problem \Pavenoret{} was able to find provable good solutions within reasonable computation times. We may note that increasing $M$ after some threshold does not seem to improve the quality of the return solution. This was confirmed by other experiments not shown here. It may indicate that the algorithm could be used efficiently in practice.
In Table~\ref{tab:Pmaxret}, the same holds for \Pmaxret{} once we have accepted to work with one shuttle. Finding an efficient algorithm with at least two shuttles seems to remain a challenging task.

\section{Proofs of Lemmas of Section~\ref{subsec:pave}}

\begin{claim}\label{lem:low_y}
We have $r_j\leq \tilde y_j\leq r_j+\eta$ for $j=0,\ldots,n$. 
\end{claim}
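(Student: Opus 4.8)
The plan is to prove both inequalities simultaneously by induction on $j$, the only inputs being the definition of the vertex set $\V$, the arc condition defining $\A$, and the recursive definition of $\tilde\yy$. The base case $j=0$ is immediate: the path $\tilde p$ starts at the vertex $(0,0)$, so $r_0=0=\tilde y_0$, and since $\eta>0$ the double inequality $r_0\leq\tilde y_0\leq r_0+\eta$ holds.

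For the inductive step, assume $r_{j-1}\leq\tilde y_{j-1}\leq r_{j-1}+\eta$ for some $j\in\{1,\ldots,n\}$. The upper bound $\tilde y_j\leq r_j+\eta$ is free from the definition, since $\tilde y_j=\min\big(r_j+\eta,\tilde y_{j-1}+C,D(T)\big)$ and the first term of the minimum is exactly $r_j+\eta$. For the lower bound $\tilde y_j\geq r_j$, it suffices to check that each of the three terms $r_j+\eta$, $\tilde y_{j-1}+C$, and $D(T)$ is at least $r_j$. The first is obvious. For the third, every $r$-coordinate of a vertex of $\G$ lies in $\{0,\eta,\ldots,R\eta\}$, so $r_j\leq R\eta=\lfloor D(T)M/C\rfloor\cdot(C/M)\leq D(T)$. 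For the second, I use the arc condition: consecutive vertices $(z_{j-1},r_{j-1})$ and $(z_j,r_j)$ of $\tilde p$ satisfy $r_{j-1}+z_j=r_j$ with $z_j\in\{0,\eta,\ldots,M\eta\}$, hence $z_j\leq M\eta=C$; combining with the induction hypothesis $\tilde y_{j-1}\geq r_{j-1}$ gives $\tilde y_{j-1}+C\geq r_{j-1}+z_j=r_j$. Thus $\tilde y_j$, being the minimum of three quantities each at least $r_j$, is itself at least $r_j$, which closes the induction.

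I do not expect a genuine obstacle here; the only points needing a little care are that the $r$-coordinate of any vertex on the path never exceeds $R\eta\leq D(T)$, and that the jump between consecutive $r$-coordinates equals the $z$-coordinate of the head vertex and is therefore bounded by $M\eta=C$ — both being immediate consequences of the definitions of $\V$ and $\A$.
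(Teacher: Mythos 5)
Your proof is correct and follows essentially the same route as the paper: the upper bound is immediate from the definition of $\tilde y_j$ as a minimum whose first term is $r_j+\eta$, and the lower bound is the same direct induction using $r_j-r_{j-1}=z_j\leq M\eta=C$ together with $R\eta\leq D(T)$. You have merely written out explicitly the induction that the paper leaves as "direct."
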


\begin{proof}
We have $\tilde y_j\leq r_j+\eta$ by definition. Using $r_j-r_{j-1}\leq M\eta$ in a feasible path, a direct induction shows that $\tilde y_j\geq r_j$ for $j=0,\ldots,n$. 
\end{proof}

\begin{claim}\label{lem:infbarD}
Suppose that $\alpha>0$. Then for all $y\in[0,D(T)]$ and $\delta\in[0,D(T)-y]$, we have $\bar\D(y+\delta)\leq\bar\D(y)+\delta/\alpha$ and $\D(y+\delta)\leq\D(y)+\delta/\alpha$.
\end{claim}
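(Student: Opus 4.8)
The plan is to prove the two inequalities of Claim~\ref{lem:infbarD} directly from the definition of $\bar\D(\dot)$ and $\D(\dot)$, using only the hypothesis $\alpha=\inf_{t\in[0,T)}D'_+(t)>0$. The key observation is that a strictly positive lower bound on the right derivative of $D(\dot)$ forces $D(\dot)$ to grow at least linearly with slope $\alpha$: for any $t\in[0,T)$ and any $h>0$ with $t+h\leq T$, one has $D(t+h)\geq D(t)+\alpha h$. This follows from the mean value inequality for one-sided derivatives (or, to stay elementary, from the fact that $t\mapsto D(t)-\alpha t$ has nonnegative right derivative everywhere on $[0,T)$, and an upper semicontinuous function with nonnegative right derivative on an interval is nondecreasing there). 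I would state this growth property as the first step.

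Next I would exploit it to bound the pseudo-inverses. Fix $y\in[0,D(T)]$ and $\delta\in[0,D(T)-y]$, and write $\bar t=\bar\D(y)$, so that by Lemma~\ref{lem:pseudo} we have $D(\bar t)\geq y$. If $\bar t + \delta/\alpha \geq T$ then $\bar\D(y+\delta)\leq T\leq \bar t+\delta/\alpha$ trivially (since $\bar\D$ takes values in $[0,T]$), so assume $\bar t+\delta/\alpha<T$. Applying the growth property at $t=\bar t$ with $h=\delta/\alpha$ gives $D(\bar t+\delta/\alpha)\geq D(\bar t)+\alpha\cdot(\delta/\alpha)\geq y+\delta$. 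Hence $\bar t+\delta/\alpha$ belongs to the set $\{t\in[0,T]\colon D(t)\geq y+\delta\}$ whose infimum is $\bar\D(y+\delta)$, so $\bar\D(y+\delta)\leq \bar t+\delta/\alpha=\bar\D(y)+\delta/\alpha$, as claimed. The argument for $\D(\dot)$ is entirely analogous: set $t_0=\D(y)$; for any $t>t_0$ we have $D(t)>y$; one checks that $\D(y+\delta)\leq \D(y)+\delta/\alpha$ by showing that, for every $\eps>0$ small, $D(t_0+\delta/\alpha+\eps)>y+\delta$, which again follows from applying the linear growth bound (using $D(t_0+\eps')>y$ for $\eps'>0$ together with growth by at least $\alpha\cdot(\delta/\alpha)$ over the remaining length). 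One passes to the limit $\eps\to 0$ and uses the definition of $\D$ as an infimum; the edge case $\D(y)=T$ or $t_0+\delta/\alpha\geq T$ is handled as above since $\D$ is $[0,T]$-valued.

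I expect the only real subtlety — the ``hard part'' — to be the rigorous justification that a positive lower bound on the right derivative implies the linear growth inequality for an upper semicontinuous (not a priori continuous or differentiable) function. This needs a clean statement of the one-sided mean value / monotonicity lemma valid under mere upper semicontinuity plus existence of right derivatives; everything after that is routine manipulation of infima. A convenient way to package it: the function $\varphi(t)=D(t)-\alpha t$ is upper semicontinuous on $[0,T]$ with $\varphi'_+(t)=D'_+(t)-\alpha\geq 0$ on $[0,T)$, and such a function is nondecreasing on $[0,T]$ (a standard real-analysis fact; one can prove it by a contradiction/compactness argument of the same flavour as the proof of Lemma~\ref{lem:semicont}). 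Granting this, $\varphi(t+h)\geq\varphi(t)$ rearranges to the desired $D(t+h)\geq D(t)+\alpha h$, and the proof is complete once the two pseudo-inverse bounds are read off as above.
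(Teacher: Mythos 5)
Your proof is correct and takes essentially the same route as the paper: both reduce the claim to the linear growth bound $D(b)\geq D(a)+\alpha(b-a)$ for $0\leq a\leq b\leq T$ and then apply it with $a=\bar\D(y)$ and $b=\bar\D(y)+\delta/\alpha$, using $D(\bar\D(y))\geq y$ from Lemma~\ref{lem:pseudo}. The only difference is in how the growth bound is justified: the paper cites Diewert's mean value theorem for semicontinuous functions, whereas you prove the required monotonicity of $t\mapsto D(t)-\alpha t$ directly, which is a sound (and self-contained) alternative.
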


\begin{proof}
Diewert~\cite{diewert1981alternative} extended the Mean Value Theorem to semicontinuous functions. According to his result, 
for any $0\leq a\leq b\leq T$, there exists $c\in[a,b)$ such that
$$
\limsup_{t\to 0^+} \frac{D(c+t)-D(c)}{t}\leq \frac{D(b)-D(a)}{b-a}.
$$
Since $$\alpha=\inf_{t\in[0,T)}D'_+(t)\leq D'_+(c)=\limsup_{t\to 0^+} \frac{D(c+t)-D(c)}{t},$$ we have $D(a)+\alpha(b-a)\leq D(b)$ for any $0\leq a\leq b\leq T$.  With $a=\bar\D(y)$ and $b=\bar\D(y)+\delta/\alpha$, we get $y+\delta\leq D(\bar\D(y))+\delta\leq D(\bar\D(y)+\delta/\alpha)$ (the first inequality is given by Lemma~\ref{lem:pseudo}). By definition of $\bar\D$, we have $\bar\D(y+\delta)\leq\bar\D(y)+\delta/\alpha$.
The second inequality is proved along the same lines.
\end{proof}

\begin{proof}[Proof of Lemma~\ref{lem:kopt}]
Let $(\dd^*,\yy^*)$ be an optimal solution of \Pavenoret{} such that $d_j^*=\bar\D(y_j^*)+\nu(y_j^*-y_{j-1}^*)$ for all $j\in\{1,\ldots,S\}$ (Claim~\ref{claim:remove_dj}). Consider the sequence $\lfloor y^*_1/\eta\rfloor\eta,\ldots,\lfloor y^*_S/\eta\rfloor\eta$ and remove the repetitions. Since the sequence is nondecreasing, we obtain an increasing sequence $\boldsymbol{r}=r_1,\ldots,r_n$. We introduce $\sigma\colon\{1,\ldots,n\}\rightarrow\{1,\ldots,S\}$ with $\sigma(j)$ being the smallest index such that $r_j=\lfloor y_{\sigma(j)}^*/\eta\rfloor\eta$. We then define $z_j=r_j-r_{j-1}$ for $j\in\left\{1,\ldots,n\right\}$, with $r_0=0$. We prove that the sequence $(z_j,r_j)_{j\in\left\{1,\ldots,n\right\}}$ provides a feasible path from the vertex $(0,0)$ to $(z_n,r_n)$ in $\G$.  First note that $r_n=R\eta$ since $y_S^*=D(T)$ and that $z_j>0$.  For all $j\in\left\{1,\ldots,n\right\}$, we have $z_j=r_j-r_{j-1}=\left(\lfloor y_{\sigma(j)}^*/\eta\rfloor-\lfloor y_{\sigma(j)-1}^*/\eta\rfloor+\lfloor y_{\sigma(j)-1}^*/\eta\rfloor-\lfloor y_{\sigma(j-1)}^*/\eta\rfloor\right)\eta< M\eta+\eta$, since $\lfloor y_{\sigma(j)-1}^*/\eta\rfloor=\lfloor y_{\sigma(j-1)}^*/\eta\rfloor$ and $y^*_{\sigma(j)}-y^*_{\sigma(j)-1}\leq C$. Thus $z_j\leq M\eta$. Moreover by definition, $r_j\leq R\eta$. Therefore $(z_j,r_j)\in\V$ for all $j\in\left\{1,\ldots,n\right\}$. Let us now prove that $((z_{j-1},r_{j-1}),(z_j,r_j))\in\A$ for all $j\in\left\{2,\ldots,n\right\}$. By definition, $z_j+r_{j-1}=r_j$. 
Note that because of the definition of $r_j$, we have $r_j\leq y_{\sigma(j)}^*\leq y_{\sigma_{(j+1)}-1}^*<r_j+\eta$. 
Combining these inequalities for all $j$ with Claim~\ref{lem:infbarD} leads to
\begin{eqnarray*}
\bar\D(r_j)-\bar\D(r_{j-1})+\nu(z_j-z_{j-1})&\geq & \bar\D(y_{\sigma(j)}^*)-\eta/\alpha-\bar\D(y_{\sigma(j-1)}^*) \\
& & \qquad+\nu(y_{\sigma(j)}^*-y_{\sigma(j)-1}^*-y_{\sigma(j-1)}^*+y_{\sigma(j-1)-1}^*-2\eta)\\
& = & d_{\sigma(j)}^*-d_{\sigma(j-1)}^*-(1/\alpha+2\nu)\eta \\
&\geq& -(1/\alpha+2\nu)\eta.
\end{eqnarray*}
The sequence $(z_j,r_j)_{j\in\left\{1,\ldots,n\right\}}$ is then a feasible path $p$ from the vertex $(0,0)$ to $(z_n,r_n)$ in $\G$, with at most $S$ arcs. The only thing that remains to be checked in that the claimed inequality holds.

We have $f^{\ave}( d_{\sigma(j)}^*, y_{\sigma(j)-1}^*, y_{\sigma(j)}^*)\geq f\big(\bar\D(r_j)+\nu(z_j-\eta),r_{j-1}+\eta, r_j\big)$ for all $j\in\left\{1,\ldots,n\right\}$ since $f^{\ave}(\dot)$ is nonincreasing in the second term and nondecreasing in the first and third terms. Thus, $$\frac 1 {D(T)}\sum_{a\in A(p)}w(a)\leq\frac 1 {D(T)}\sum_{j=1}^nf^{\ave}( d_{\sigma(j)}^*, y_{\sigma(j)-1}^*, y_{\sigma(j)}^*)
\leq\gave(\dd^*,\yy^*).$$ Since this inequality holds for any optimal solution of \Pavenoret{}, we get the conclusion.
\end{proof}

\begin{proof}[Proof of Lemma~\ref{lem:feas}]
We are going to check that $(\tilde\dd,\tilde\yy)$ is feasible for \Pavenoret{}. 

For $j=1,\ldots,n$, we have $\tilde y_j-\tilde y_{j-1}\leq C$ by definition of $\tilde \yy$. For $j=n+2,\ldots,S$, we have $\tilde y_j-\tilde y_{j-1}=0$. Finally, we have $\tilde y_{n+1}-\tilde y_n\leq D(T)-r_n<\eta\leq C$ (where we use Claim~\ref{lem:low_y} to bound $\tilde y_n$). Thus, $\tilde \yy$ satisfies constraint (i).

For $j=1,\ldots,n$, if $r_j>r_{j-1}$, we have $\tilde y_{j-1}\leq r_{j-1}+\eta\leq r_j\leq \tilde y_j$ (the last inequality being Claim~\ref{lem:low_y}) and if $r_j=r_{j-1}$, necessarily $r_j=r_{j-1}=0$ and $\tilde y_{j-1}=\tilde y_j=\eta$.
Thus, $\tilde \yy$ satisfies constraint (ii).

Consider $j\in\{2,\ldots,n\}$. We have 
\begin{eqnarray*}
\tilde d_j-\tilde d_{j-1} & = & \bar\D(\tilde y_j)+\nu(\tilde y_j-\tilde y_{j-1})-\D(\tilde y_{j-1})-\nu(\tilde y_{j-1}-\tilde y_{j-2})+\gamma\eta  \\
& \geq & \bar\D(r_j)-\bar\D(r_{j-1}+\eta)+\nu(r_j-2r_{j-1}+r_{j-2}-2\eta)+\gamma\eta \\
& \geq & \bar\D(r_j)-\bar\D(r_{j-1})-\eta/\alpha+\nu(z_j-z_{j-1}-2\eta)+\gamma\eta\\
& \geq & 0.
\end{eqnarray*} The first inequality is obtained with the help of Claim~\ref{lem:low_y}. For the second one, we use Claim~\ref{lem:infbarD} and also that $z_j=r_j-r_{j-1}$ and $z_{j-1}=r_{j-1}-r_{j-2}$ which hold because $\tilde p=\big((z_0,r_0),(z_1,r_1),\ldots,(z_n,r_n)\big)$ is a path in $\G$. For the last inequality, we use $\bar\D(r_j)-\bar\D(r_{j-1})+\nu(z_j-z_{j-1})+\frac 1 2 \gamma\eta\geq 0$, which holds again because $\tilde p$ is a path, and the definition of $\gamma$. For $j\geq n+1$, we have $\tilde d_j\geq \tilde d_{j-1}$ by definition. Constraint (iii) is thus satisfied for all $j$.

If $n<S$, then $\tilde y_S=D(T)$ by definition. From now on, we suppose thus that $n=S$. We also suppose that $S\geq 2$. The case $S=1$ being easy to check (and anyway, for a complexity point of view, this case does not matter). If $\tilde y_{S-1}=r_{S-1}+\eta$, then $\tilde y_{S-1}+C=r_{S-1}+\eta+C\geq r_S+\eta>D(T)$ (here we use that $z_S\leq C$ and that $r_S=R\eta$) and thus $\tilde y_S=D(T)$. If $\tilde y_{S-1}=D(T)$, then $\tilde y_S=D(T)$ since $\tilde y_{S-1}\leq \tilde y_S\leq D(T)$. Hence, in all these cases, $\tilde \yy$ satisfies constraint (iv). The only remaining case is when $\tilde y_{S-1}=\tilde y_{S-2}+C$. If $j$ is an index in $\left\{1,\ldots,S-2\right\}$ such that $\tilde y_j=r_j+\eta$, then we have $r_{j+1}+\eta\leq r_j+C+\eta=\tilde y_j+C$ and $r_{j+1}+\eta\leq D(T)$, and thus $\tilde y_{j+1}=r_{j+1}+\eta$. It implies that as soon as some $j_0\in\left\{1,\ldots,S-1\right\}$ is such that $\tilde y_{j_0}=r_{j_0}+\eta$, we have $\tilde y_{S-1}=r_{S-1}+\eta$, which is a case we have already dealt with. Since $r_j+\eta\leq r_S\leq D(T)$ for $j\in\{1,\ldots,S-1\}$, we are left with the case where $\tilde y_j=\tilde y_{j-1}+C$ for every $j\in\left\{1,\ldots,S-1\right\}$. In this situation, we have $\tilde y_{S-1}=(S-1)C$ 
and hence $\tilde y_{S-1}+C=CS\geq D(T)$. Since $r_S+\eta> D(T)$, we get that $\tilde y_S=D(T)$, and $\tilde \yy$ satisfies constraint (iv) in every case.

For $j=1,\ldots,n$, we have $\tilde d_j\geq \bar\D(\tilde y_j)+\nu(\tilde y_j-\tilde y_{j-1})$ by definition, and for $j\geq n+1$, we have $\tilde d_j\geq T+\nu(\tilde y_{n+1}-\tilde y_n)\geq\bar\D(\tilde y_j)+\nu(\tilde y_j-\tilde y_{j-1})$. Thus $\tilde \dd$ satisfies constraint (v) and $(\tilde \dd,\tilde \yy)$ is feasible for \Pavenoret{}.
\end{proof}

\begin{proof}[Proof of Lemma~\ref{lem:byy}]
Our goal is to bound from above the following quantity \begin{equation}\label{eq:g}
\gave(\tilde \dd,\tilde \yy)=\frac 1 {D(T)}\sum_{j=1}^Sf^{\ave}(\tilde d_j,\tilde y_{j-1},\tilde y_j)
\end{equation} 
We proceed by splitting the expression into two parts: the sum from $j=1$ to $j=n$, and the sum from $j=n+1$ to $j=S$.

Using Claims~\ref{lem:low_y} and~\ref{lem:infbarD}, we have 
$\bar\D(\tilde y_j)+\nu(\tilde y_j-\tilde y_{j-1})\leq q_j+\eta/\alpha+\nu\eta$, where $q_j=\bar\D(r_j)+\nu(r_j-r_{j-1})$.
Thus we have for all $j\leq n$,
\begin{equation}\label{eq:B1}
\sum_{j=1}^nf^{\ave}(\tilde d_j,\tilde y_{j-1},\tilde y_j)\leq \sum_{j=1}^nf^{\ave}(q_j+\eta/\alpha+\nu\eta+j\gamma\eta,r_{j-1},r_j+\eta),
\end{equation}
since $f^{\ave}(\dot)$ is nonincreasing in the second term and nondecreasing in the first and third terms and where we extend the definition of $\bar\D(\dot)$ by letting $\bar\D(y)=T$ for all $y>D(T)$. 

For the second part, we proceed as follows. Since $r_n+\eta=(R+1)\eta>D(T)$, Claim~\ref{lem:low_y} immediately  implies $D(T)-\tilde y_n\leq\eta$. With Claim~\ref{lem:infbarD}, we get thus $T\leq\bar\D(\tilde y_n)+\eta/\alpha$, where we used $T=\bar\D\big(D(T)-\tilde y_n+\tilde y_n\big)$. This provides 
$$\tilde d_{n+1}\leq\bar\D(\tilde y_n)+\eta/\alpha +\nu(r_n-r_{n-1})+\nu\eta+n\gamma\eta=q_n+(1/\alpha +\nu+n\gamma)\eta.$$
Using again the fact that $f^{\ave}(\dot)$ is nonincreasing in the second term and nondecreasing in the first and third terms and with the help of Claim~\ref{lem:low_y}, we get
\begin{equation}\label{eq:B2}
\sum_{j=n+1}^Sf^{\ave}(\tilde d_{j},\tilde y_{j-1},\tilde y_{j})\leq f^{\ave}(q_n+\eta/\alpha+\nu\eta+n\gamma\eta,r_n,r_n+\eta),\end{equation}
since the terms indexed by $j=n+2,\ldots,S$ are all zero and since $D(T)<r_n+\eta$.

We aim at comparing the upper bounds in Equations~\eqref{eq:B1} and~\eqref{eq:B2} with 
\begin{equation}\label{eq:w}
\sum_{a\in A(\tilde p)}w(a)=\sum_{j=1}^nf^{\ave}(q_j-\nu\eta,r_{j-1}+\eta,r_j).
\end{equation}

We first compare the $j$th term of the bound in~\eqref{eq:B1} with the $j$th term of the sum in~\eqref{eq:w}.
$$
f^{\ave}(q_j+\eta/\alpha+\nu\eta+j\gamma\eta,r_{j-1},r_j+\eta)-f^{\ave}(q_j-\nu\eta,r_{j-1}+\eta,r_j)=I_j^1+I_j^2+I_j^3
$$ with
\begin{eqnarray*}
I_j^1 &=& \int_{r_{j-1}}^{r_{j-1}+\eta}\big(q_j+\eta/\alpha+\nu\eta+j\gamma\eta-\bar\D(u)\big)du\\
I_j^2&=&\int_{r_{j-1}+\eta}^{r_{j}}\big(j\gamma\eta+\eta/\alpha+2\nu\eta\big)du\\
I_j^3 &=& \int_{r_{j}}^{r_{j}+\eta}\big(q_j+\eta/\alpha+\nu\eta+j\gamma\eta-\bar\D(u)\big)du.
\end{eqnarray*}
Since $\bar\D(\dot)$ in nondecreasing, we get
\begin{eqnarray*}
I_j^1 &\leq& \big(\bar\D(r_j)-\bar\D(r_{j-1})+\nu(r_j-r_{j-1})\big)\eta+(1/\alpha+\nu+j\gamma)\eta^2\\
I_j^2&\leq& (r_j-r_{j-1})(j\gamma+1/\alpha+2\nu)\eta-(j\gamma+1/\alpha+2\nu)\eta^2\\
I_j^3&\leq& \nu(r_j-r_{j-1})\eta+(1/\alpha+\nu\eta+j\gamma)\eta^2.
\end{eqnarray*}
Using $j\gamma\leq n\gamma$ and $\gamma=2(1/\alpha+2\nu)$, we obtain
$$I_j^1+I_j^2+I_j^3\leq \big(\bar\D(r_j)-\bar\D(r_{j-1})+2\nu(r_j-r_{j-1})\big)\eta+(n+1/2)\gamma\eta^2+(r_j-r_{j-1})(n+1/2)\gamma\eta.$$
We now bound the term in Equation~\eqref{eq:B2}. Let $I=f^{\ave}(q_n+\eta/\alpha+\nu\eta+n\gamma\eta,r_n,r_n+\eta)$. We have
\begin{eqnarray*}
I &=& \int_{r_n}^{r_n+\eta}(q_n+\eta/\alpha+\nu\eta+n\gamma\eta-\bar\D(u))du.\\
&\leq & \nu(r_n-r_{n-1})\eta+(1/\alpha+\nu+n\gamma\big)\eta^2 .
\end{eqnarray*}

We have thus 
\begin{eqnarray*}
\gave(\tilde \dd,\tilde \yy)-\frac 1 {D(T)}\sum_{a\in A(\tilde p)}w(a) & \leq &  \frac 1 {D(T)}\left(\sum_{j=1}^n(I_j^1+I_j^2+I_j^3)+I\right) \\
& \leq & \frac 1 {D(T)}\left(\bar\D(r_n)+2\nu r_n+r_n(n+1)\gamma+\nu C+(n+1)^2\gamma \eta\right)\eta.
\end{eqnarray*}
Using $r_n\leq D(T)$ and $\bar\D(r_n)\leq T$ leads to
$$
\gave(\tilde \dd,\tilde \yy)\leq \frac 1{D(T)}\sum_{a\in A(p)}w(a)+\left(\frac{T+\nu C}{D(T)}+\gamma (S+1)\right)\eta+\frac {\gamma (S+1)^2}{D(T)}\eta^2.
$$
\end{proof}

\bibliographystyle{plain}
 \bibliography{database}

\begin{thebibliography}{10}

\bibitem{barrena2014exact}
Eva Barrena, David Canca, Leandro Coelho, and Gilbert Laporte.
\newblock Exact formulations and algorithm for the train timetabling problem
  with dynamic demand.
\newblock {\em Computers \& Operations Research}, 44:66--74, 2014.

\bibitem{cacchiani2008column}
Valentina Cacchiani, Alberto Caprara, and Paolo Toth.
\newblock A column generation approach to train timetabling on a corridor.
\newblock {\em 4OR: A Quarterly Journal of Operations Research}, 6(2):125--142,
  2008.

\bibitem{cacchiani2010non}
Valentina Cacchiani, Alberto Caprara, and Paolo Toth.
\newblock Non-cyclic train timetabling and comparability graphs.
\newblock {\em Operations Research Letters}, 38(3):179--184, 2010.

\bibitem{cai1998greedy}
Xiaoqiang Cai, C.J. Goh, and Alistair Mees.
\newblock Greedy heuristics for rapid scheduling of trains on a single track.
\newblock {\em IIE transactions}, 30(5):481--493, 1998.

\bibitem{caprara2002modeling}
Alberto Caprara, Matteo Fischetti, and Paolo Toth.
\newblock Modeling and solving the train timetabling problem.
\newblock {\em Operations research}, 50(5):851--861, 2002.

\bibitem{cordone2011optimizing}
Roberto Cordone and Francesco Redaelli.
\newblock Optimizing the demand captured by a railway system with a regular
  timetable.
\newblock {\em Transportation Research Part B: Methodological}, 45(2):430--446,
  2011.

\bibitem{diewert1981alternative}
Walter~Erwin Diewert.
\newblock Alternative characterizations of six kinds of quasiconcavity in the
  nondifferentiable case with applications to nonsmooth programming.
\newblock In S.~Schaible and W.T Ziemba, editors, {\em Generalized concavity in
  optimization and economics}, pages 51--93. Academic Press, New York, 1981.

\bibitem{ingolotti2006new}
Laura Ingolotti, Antonio Lova, Federico Barber, Pilar Tormos, Miguel~Angel
  Salido, and Montserrat Abril.
\newblock New heuristics to solve the {CSOP} railway timetabling problem.
\newblock In {\em International Conference on Industrial, Engineering and Other
  Applications of Applied Intelligent Systems}, pages 400--409. Springer, 2006.

\bibitem{kroon2009new}
Leo Kroon, Dennis Huisman, Erwin Abbink, Pieter-Jan Fioole, Matteo Fischetti,
  G{\'a}bor Mar{\'o}ti, Alexander Schrijver, Adri Steenbeek, and Roelof Ybema.
\newblock The new dutch timetable: The {O}{R} revolution.
\newblock {\em Interfaces}, 39(1):6--17, 2009.

\bibitem{kroon2003variable}
Leo Kroon and Leon Peeters.
\newblock A variable trip time model for cyclic railway timetabling.
\newblock {\em Transportation Science}, 37(2):198--212, 2003.

\bibitem{brauner2007scheduling}
Vassilissa Lehoux-Lebacque, Nadia Brauner, Gerd Finke, and Christophe Rapine.
\newblock Scheduling chemical experiments.
\newblock In {\em 37th International Conference on Computers and Industrial
  Engineering,(CIE37)}, 2007.

\bibitem{liebchen2003finding}
Christian Liebchen.
\newblock Finding short integral cycle bases for cyclic timetabling.
\newblock In {\em European Symposium on Algorithms}, pages 715--726. Springer,
  2003.

\bibitem{liebchen2002case}
Christian Liebchen and Rolf M{\"o}hring.
\newblock A case study in periodic timetabling.
\newblock {\em Electronic notes in theoretical computer science}, 66(6):18--31,
  2002.

\bibitem{nachtigall1996genetic}
Karl Nachtigall and Stefan Voget.
\newblock A genetic algorithm approach to periodic railway synchronization.
\newblock {\em Computers \& Operations Research}, 23(5):453--463, 1996.

\bibitem{serafini1989mathematical}
Paolo Serafini and Walter Ukovich.
\newblock A mathematical model for periodic scheduling problems.
\newblock {\em SIAM Journal on Discrete Mathematics}, 2(4):550--581, 1989.

\bibitem{voorhoeve1993rail}
Marc Voorhoeve.
\newblock Rail scheduling with discrete sets.
\newblock {\em Unpublished report, Eindhoven University of Technology, The
  Netherlands}, 1993.

\end{thebibliography}

\end{document}